\numberwithin{equation}{section}
\newtheorem{theorem}{Theorem}[section]
\newtheorem{definition}[theorem]{Definition}
\newtheorem{lemma}[theorem]{Lemma}
\newtheorem{corollary}[theorem]{Corollary}
\newtheorem{conjecture}[theorem]{Conjecture}
\theoremstyle{remark}
\newtheorem{remark}[theorem]{Remark}
\def\dE {{\mathbb E}}
\def\cP{\mathcal{P}}
\def\cN{\mathcal{N}}
\newcommand{\cG}{\mathcal{G}}
\newcommand{\cE}{\mathcal{E}}
\newcommand{\cT}{\mathcal{T}}
\newcommand{\cGb}{\mathcal{G}^\bullet}
\newcommand{\cI}{\mathcal{I}}
\newcommand{\ABS}[1]{{{\left| #1 \right|}}} 
\newcommand{\SBRA}[1]{{{\left[#1\right]}}} 
\newcommand{\PAR}[1]{{{\left(#1\right)}}} 
\def\dN{\mathbb{N}}
\def\dP{\mathbb{P}}
\newcommand{\UGW}{{\mathrm{UGW}}}
\newcommand{\distr}{{\mathrm{distr}}}
\newcommand{\dd}{{\mathrm{d}}}
\newcommand{\IND}{{\mathbf{1}}}
\newcommand{\veps}{\epsilon}
\begin{document}

\title{Typicality and entropy of processes on infinite trees}
\author{\'Agnes Backhausz$^{1,2}$}
\address{$^1$ ELTE E\"otv\"os Lor\'and University, Budapest, Hungary, Faculty of Science and Alfr\'ed R\'enyi Institute of Mathematics} 
\address{$^2$ Alfr\'ed R\'enyi Institute of Mathematics }
\author{ Charles Bordenave$^3$} \address{$^3$Institut de Math\'ematiques de Marseille} 
\author{ Bal\'azs Szegedy$^2$}

\maketitle

\begin{abstract}
Consider a uniformly sampled random $d$-regular graph on $n$ vertices. If $d$ is fixed and $n$ goes to $\infty$ then we can relate typical (large probability) properties of such random graph to a family of invariant random processes (called "typical" processes) on the infinite $d$-regular tree $T_d$. This correspondence between ergodic theory on $T_d$ and random regular graphs is already proven to be  fruitful in both directions. This paper continues the investigation of typical processes with a special emphasis on entropy. We study a natural notion of micro-state entropy for invariant processes on $T_d$. It serves as a quantitative refinement of the notion of typicality and is tightly connected to the asymptotic free energy in statistical physics. Using entropy inequalities, we provide new sufficient conditions for typicality for edge Markov processes. We also extend these notions and results to processes on unimodular Galton-Watson random trees.
\end{abstract}

\begin{abstract}[Fr]
On considère un graphe $d$-régulier aléatoire  avec $n$ sommets uniformément distribué. Si $d$ est fixé et $n$ diverge, nous pouvous alors relié les propriétés typiques (de grande probabilité) d'un tel graphe aléatoire avec une famille de processus aléatoires invariants (dénommés processus "typiques") sur l'arbre $d$-régulier infini $T_d$. Cette correspondance entre théorie ergodique sur $T_d$ et graphes réguliers aléatoires s'est déjà révélée fructueuse dans les deux directions. Ce papier poursuit l'investigation des processus typiques avec un accent mis sur l'entropie. Nous y étudions une notion naturelle d'entropie micro-état  pour les processus invariant sur $T_d$. Elle sert de rafinement quantitatif  à la notion de typicalité et elle est intimement reliée à l'energie libre asymptotique en physique statistique. Au moyen d'inégalités entropiques, nous démontrons des nouvelles conditions suffisantes de typicalité pour des processus markovien sur les arêtes de l'arbre. Nous étendons aussi ces notions et résultats à des processus sur des arbres de Galton-Watson unimodulaires.
\end{abstract}

\section{Introduction}

\subsection{Typical processes and sofic entropy} \label{subsec:intro}

Random $d$-regular graphs have been extensively studied over the past 50 years \cite{MR1864966, MR2437174, MR3385636, MR1725006}.  Sophisticated methods from probability theory, combinatorics and statistical physics have been successfully used to uncover many of their properties  such as independence ratio, the density of a maximal cut or its spectral gap \cite{MR2238043, MR1384372,MR2437174}. 
The recently emerging theory of graph limits \cite{MR1873300,MR3177383,MR3256814,MR3012035} gives a new, limiting point of views on the subject. It turns out that many of the crucial properties of random $d$-regular graphs for $d$ fixed and $n$ going to infinity can also be studied in the framework of ergodic theory on the infinite $d$-regular tree $T_d$ \cite{typical}.  An illustration of the power of this method is the proof of the Gaussianity of the almost eigenvectors of random $d$-regular graphs \cite{eigenvector, bourbaki}. The proof is a combination of ergodic theory on $T_d$ with information theoretic methods.

In this paper we are interested in a question which is also related to the limit of random regular graphs, namely, to the family of typical processes on $T_d$ introduced in \cite{typical}. These objects arise as the local limits of vertex-colored random $d$-regular graphs (see formal definition below).  
These typical processes contain useful information on the structure of $d$-regular graphs. For example, the classical fact from \cite{MR624948} that the independence ratio is separated from $1/2$  is equivalent to the fact that the alternating coloring of the infinite $d$-regular tree is not typical. Several necessary conditions have been formulated for typical processes in the last years. Some of them are about the covariance structure, others are entropy inequalities  \cite{MR3385741, MR3729654, MR4164843}. However, general sufficient conditions for a process to be typical are  less common.

 In this paper we obtain sufficient conditions for the typicality of processes on $T_d$, by studying a new micro-state entropy. This entropy measures in some sense the number of finite approximations of a process on a random instance of a large $d$-regular graph. This entropy is tightly connected to Bowen's sofic entropy in measured group theory, see \cite{MR3966832}.

Our approach has another connection to the convergence of random graphs. While graph limit theory shows great promise in a variety of questions related to random $d$-regular graphs, it also revealed an intriguing open problem. It is believed that for fix $d$ and $n\to\infty$ we have that a random $d$-regular graph on $n$ vertices is  convergent (in probability) in the local-global sense of \cite{MR3177383} and also right-convergent in the sense of \cite{MR3256814}, this last notion of convergence relies on a deep statistical physics theory, see the monograph \cite{10.5555/1592967}. These general conjectures are a common strengthening of a large variety of conjectures many of which are already proven. For example the convergence of the independence ratio is proven in \cite{MR3161470,MR3689942} and the convergence of asymptotic free energies of a large class of statistical physics models on random $d$-regular graphs in \cite{MR3482664,MR4032871}.
 In this paper, we introduce an upper and a lower version of the micro-state entropies. Equality of these entropies imply the convergence of random regular graphs. For a family of processes we can establish this equality. This will lead to the sufficient conditions mentioned above, and this shows that our results might lead to a deep understanding of the structure of random regular graphs. 

\subsection{Definitions.} 
We now formalize the main definitions. Let $d \geq 3$ be an integer and $T_d$ the infinite $d$-regular tree (all vertices have $d$ neighbors). Let $M$ be a finite set. A process $X$ on  $T_d$ is a random variable on $M^{T_d}$. This process (or its law) is {\em invariant} if the law of $X$ is invariant by all automorphisms of the tree $T_d$.  We denote by $\mathcal I_d(M)$ the set of invariant probability measures on $M^{T_d}$.

We now define the Benjamini-Schramm topology. A pair $(G,f)$ formed by a graph $G = (V,E)$ and a map $f : V \to M$ will be called a colored graph with color set $M$. A rooted colored graph is a triple $(G,f,o)$ formed by a connected colored graph $(G,f)$ and a distinguished vertex $o$ of $G$, called the root.  Two rooted colored graphs $(G,f,o)$ and $(G',f',o')$ are isomorphic if there is an isomorphism of $G$ and $G'$ which preserves the colors and the roots. An equivalence class of rooted colored graphs is called an unlabeled rooted colored graph in combinatorics.

Unlabeled rooted graphs give the proper setup for defining a meaningful notion of convergence. It is however more convenient to work with rooted labeled graphs instead of unlabeled rooted graphs. To this end, we now define a randomized canonical graph in each equivalence class. We define the set of finite integer sequences as 
\begin{equation}\label{eq:defNf}
\dN^f = \cup_{k \geq 0} \dN^k 
\end{equation}
where $\dN^0 = \{ o \}$ and $\dN = \{1, 2,\ldots,\}$ by convention. The tree $T_d$ can be classically built on a subset of $\dN^f$ as follows. The root of the tree is $o$, its $d$-neighbors of $o$ are $V_1 = \{1,\ldots,d\} \subset \dN$, the neighbors of $i \in V_1$ are $o$ and $\{(i,1),\ldots ,(i,d-1) \} \in \dN^2$ and so on.  More generally, if $(G,o)$ is a rooted graph, the breadth-first search tree started at the root $o$, where ties between vertices are broken uniformly at random defines a random graph $(G',o)$ on a subset of $\dN^f$ whose law depends only the equivalence class of $(G,o)$: a vertex at distance $k$ from the root receives a label in $\dN^k$, if $(i_1,\ldots,i_{k-1})$ is the label of its parent in the search tree, it has the label $(i_1,\ldots,i_{k-1},j)$ if it the $j$-th offspring of its parent in the random ordering. We call this random rooted graph, the {\em randomly labeled rooted graph} associated to $(G,o)$. Conversely, we will say that a random labeled colored graph $(G,f,o)$ on a subset of $\dN^f$ is randomly labeled if its law is equal to the law of the randomly labeled rooted colored graph associated to its unlabeled rooted colored graph. By definition if $X \in \cI_d(M)$ then $(T_d,X,o)$ is randomly labeled.

Recall that a graph is locally finite if all its vertices have a finite number of neighbors.  We denote by $\cGb$ (respectively $\cGb_M$) the set of locally finite graphs (respectively locally finite colored graphs on the color set $M$) on the vertex set $\dN^f$ rooted at $o$ which are admissible in the sense that they are realizable as a breadth-first search labeling of a locally finite graph. The sets $\cGb$  and $\cGb_M$   are complete separable metric spaces when equipped with th\def\cP{\mathcal{P}}e distance $\dd (g,g') = \sum_{r \geq 0} 2^{-r} \IND_{(g)_r \ne (g')_r}$ where $\IND$ is the indicator function and $g_r$ is the restriction of $g$ to the vertices at distance at most $r$ from the root.  We denote by $\cP(\cGb_M)$ the set of probability measures on $\cGb_M$. We equip $\cP(\cGb_M)$ with  a distance, also denoted by $\dd$, which generates the weak topology on $\cP(\cGb_M)$ (for example, the L\'evy-Prohorov distance).

If $(G,f)$ is a locally finite colored graph and $v$ is a vertex of $G$ then we denote by $\distr_{G,v}(f)$ the law  in $\cP(\cGb_M)$ of the randomly labeled rooted graph $((G,f)(v),v)$ where $(G,f)(v)$ is the restriction of $(G,f)$ to the connected component  of $G$ containing $v$. The law $\distr_{G,v}$ in $\cP(\cGb)$  is defined similarly for a graph $G$ and a vertex $v$. Finally, if $ G$ is finite with vertex set $V$, we may define the probability measures in  $\cP(\cGb)$ and $\cP(\cGb_M)$:
$$
\distr_G =  \frac{1}{|V|} \sum_{v \in V} \distr_{G,v} \quad \hbox{ and } \quad \distr_G(f) =  \frac{1}{|V|} \sum_{v \in V} \distr_{G,v} (f).
$$


For integers $ n \geq d+1$ with $nd$ even, the set $\cG_n(d)$ of simple $d$-regular graphs on the vertex set $[n] = \{1, \ldots , n\}$ is not empty. For each integer $n \geq d+1$ with $nd$ even, let  $G_n$ be a uniformly distributed random graph on $\cG_n(d)$. Almost-surely, the probability distribution $\distr_{G_n}$ converges as $n$ goes to infinity to the Dirac mass at $T_d$ rooted at $o$ (it is a consequence of the fact that the  number of cycles of length $k$ in $G_n$ is $O(1)$ for any fixed $k$, see \cite{MR1864966}). It can further be checked that, a.s. if $\mu$ is an accumulation point of $\distr_{G_n}(f_n)$ for some sequence of colorings $f_n \in M^n$, then $\mu \in \mathcal I_d(M)$. This motivates the following definition introduced in \cite{typical}.

\begin{definition}[Typical process]
A measure $\mu\in \mathcal I_d(M)$ is \emph{weakly typical} if 
$$
\lim_{\veps \to 0} \limsup_{n \to \infty} \dP ( \exists f \in M^n  : \dd ( \distr_{G_n} (f) , \mu ) \leq \veps ) = 1.
$$
 It is \emph{strongly typical} if 
$$
\lim_{\veps \to 0} \lim_{n \to \infty} \dP ( \exists f \in M^n : \dd ( \distr_{G_n} (f) , \mu ) \leq \veps ) = 1.
$$
\end{definition} 
Note that it is apparent from the definition that being typical does not depend on the choice of the distance $\dd$ which generates the weak topology. We note also that the definition in \cite{typical} is slightly different but it turns out to be equivalent by using some measure concentration phenomena (see \cite[Section 5]{eigenvector}).

We may also define a notion of micro-state entropy of $\mu \in \mathcal I_d(M)$ as follows.  For $\mu \in \cI_d(M)$ and $r \geq 0$, the probability measure $\mu_{r}$ is defined as the restriction of $\mu$ to the vertices at distance at most $r$ from the root. Similarly, for $f\in M^n$, we define $\distr_{G} (f)_r$ as the restriction of the distribution $\distr_{G} (f)$ to the set of vertices at distance at most $r$ from the root. For any $\veps >0$, if $G \in \cG_n(d)$, we define
\begin{equation}\label{eq:defFG}
\mathcal F_G(\mu,r,\veps)  = \{ f  \in  M^n  : \dd ( \distr_{G} (f)_r , \mu_r) \leq \veps  \}.
\end{equation}
This is the set of coloring functions $f$ on $G$ which are $\veps$ close to  $\mu_r$ in the Benjamini-Schramm sense. Let  $G_n$ be a uniformly distributed random graph on $\cG_n(d)$ with $n \geq d-1$ and $nd $ even. Roughly speaking the sofic entropy of $\mu$ is a limit of 
\begin{equation}\label{eq:defHG}
H_{G_n} (\mu,r, \veps) = \frac 1 n \log |\mathcal F_{G_n}(\mu,r,\veps)|,
\end{equation}
in $n \to \infty$ and then in $\veps \to 0$ and $r\to \infty$.  Notice that random regular graphs could be replaced by other locally tree-like graph sequences, but as the expectation of $|\mathcal F_{G_n}(\mu,r,\veps)|$ is easier to compute in our case, we use the definition based on random regular graphs. Since $H_{G_n} (\mu, r,\veps)$ is a random variable in $\{-\infty\} \cup [0,\infty)$ some care is needed. More formally, 
we fix $0 < \alpha < 1$ and consider the following median value of $H_{G_n} (\mu,r, \veps)$,
$$
h_n ( \mu,r,\veps,\alpha) = \sup \left\{ h \in \{-\infty\} \cup [0,\infty) : \dP  \left( H_{G_n} (\mu, r,\veps) \geq  h \right) \geq \alpha  \right\}.
$$
Since $h_n$ is non-decreasing in $\veps$, we may define the upper and lower entropies of $\mu$ as 
$$
\bar h(\mu,r,\alpha) =  \lim_{ \veps \to 0}  \limsup_{n \to \infty} h_n ( \mu,r,\veps,\alpha) \quad \hbox{ and }\quad\underline h(\mu,r,\alpha) =  \lim_{ \veps \to 0}  \liminf_{n \to \infty} h_n ( \mu,r,\veps,\alpha).
$$

These entropies are extended real numbers in $\{-\infty\} \cup [0,\infty)$. This entropy can be interpreted as a version of Bowen's sofic entropy, see the survey \cite{MR3966832}.

The sofic entropies $\bar h(\mu,r,\alpha)$ and $\underline h(\mu,r,\alpha) $ do not depend on $\alpha \in (0,1)$. Note that they do not depend neither on the choice of the distance $\dd$ (in the sense that if two distances are topologically equivalent, the corresponding quantities $\bar h(\mu,r,\alpha)$ and $\underline h(\mu,r,\alpha) $ are equal). We shall prove the following.
\begin{lemma}\label{le:hconst}
Let $\mu \in \cI_d(M)$ and $r \geq 0$. The function $\alpha \mapsto (\bar h(\mu,r,\alpha),\underline h(\mu,r,\alpha))$ is constant on $(0,1)$. 
\end{lemma}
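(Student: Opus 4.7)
The function $\alpha \mapsto h_n(\mu,r,\veps,\alpha)$ is non-increasing by construction: enlarging $\alpha$ restricts the supremum to a smaller set of admissible thresholds $h$. Consequently both $\bar h(\mu,r,\alpha)$ and $\underline h(\mu,r,\alpha)$ are non-increasing in $\alpha\in(0,1)$, and the lemma reduces to showing the reverse monotonicity. The natural way to prove this is to establish a concentration statement for $H_{G_n}(\mu,r,\veps)$: for every $\delta>0$ and every $\veps>0$ small enough, $H_{G_n}(\mu,r,\veps)$ lies, with probability $1-o(1)$ as $n\to\infty$, within a window of width $\delta$ about a deterministic sequence $a_n(\veps)$. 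Under such concentration every $\alpha$-quantile $h_n(\mu,r,\veps,\alpha)$ with $\alpha\in(0,1)$ is within $\delta$ of $a_n(\veps)$, and the outer limits $n\to\infty$ then $\veps\to 0$ force both $\bar h(\mu,r,\cdot)$ and $\underline h(\mu,r,\cdot)$ to be constant.

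The central ingredient is an edge-swap Lipschitz estimate. If two $d$-regular graphs $G,G'$ on $[n]$ differ by a single swap---removing $\{u_1,v_1\},\{u_2,v_2\}$ and inserting $\{u_1,v_2\},\{u_2,v_1\}$---then only the $O(d^r)$ vertices within distance $r$ of $\{u_1,v_1,u_2,v_2\}$ see their rooted $r$-ball change. For every coloring $f\in M^n$ this gives $\dd(\distr_G(f)_r,\distr_{G'}(f)_r) \leq C_{d,r}/n$, whence
\[
\mathcal F_G(\mu,r,\veps) \;\subseteq\; \mathcal F_{G'}(\mu,r,\veps+C_{d,r}/n),
\]
and in particular $H_G(\mu,r,\veps) \leq H_{G'}(\mu,r,\veps+C_{d,r}/n)$. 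Realizing $G_n$ via the configuration model (conditional on simplicity, an event of positive asymptotic probability) recasts $H_{G_n}$ as a function of a uniform random pairing of $nd$ half-edges, and a Doob martingale along the successive pairings, fed by the edge-swap bound, is the natural source of an Azuma-type concentration.

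The main obstacle is that the Lipschitz estimate is not a pointwise bound on $|H_G(\mu,r,\veps)-H_{G'}(\mu,r,\veps)|$ but carries a shift in $\veps$, so the classical Azuma/McDiarmid machinery does not apply verbatim. To overcome this I would either (a) combine a first-moment/Markov bound for the upper tail of $H_{G_n}(\mu,r,\veps)$ (via $\dE[|\mathcal F_{G_n}(\mu,r,\veps)|]$, which is tractable in the configuration model) with a Paley--Zygmund lower tail from a second-moment estimate on $|\mathcal F_{G_n}(\mu,r,\veps)|$, or (b) calibrate a slowly vanishing scale $\veps_n\to 0$ for which the cumulative $\veps$-shift across all martingale increments is kept negligible compared to $\veps_n$, so that the Lipschitz estimate is effectively pointwise on the relevant scale. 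Either route delivers concentration tight enough to collapse every positive-probability quantile onto $a_n(\veps)$, and the lemma follows.
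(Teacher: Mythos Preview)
Your diagnosis of the difficulty is exactly right: the edge-swap estimate only gives
\[
H_G(\mu,r,\veps) \leq H_{G'}(\mu,r,\veps + C_{d,r}/n),
\]
not a pointwise bound on $|H_G(\mu,r,\veps)-H_{G'}(\mu,r,\veps)|$, and this is a genuine obstruction. However, neither of your two proposed workarounds closes the gap. For route~(b): the martingale along the configuration model has $\Theta(nd)$ increments, each incurring an $\veps$-shift of order $C_{d,r}/n$, so the \emph{cumulative} shift is a fixed constant of order $C_{d,r}d$, not $o(1)$. No choice of $\veps_n\to 0$ can absorb this. Worse, $H_G(\mu,r,\veps)$ can jump from $-\infty$ to a nonnegative value under a single swap (when $\mathcal F_G(\mu,r,\veps)$ goes from empty to nonempty), so the martingale differences are not even finite. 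For route~(a): the Paley--Zygmund lower tail requires $\dE\,|\mathcal F_{G_n}(\mu,r,\veps)|^2$ to be comparable to $(\dE\,|\mathcal F_{G_n}(\mu,r,\veps)|)^2$, but this is precisely the second-moment condition that is only assumed in Theorem~\ref{th:2ndmoment} and fails for general $\mu\in\cI_d(M)$. So route~(a) would prove the lemma only for a restricted class of processes.

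The paper's resolution is to replace $H_G(\mu,r,\veps)$ by a smooth proxy that \emph{does} enjoy a genuine pointwise Lipschitz bound under swaps: the ``soft'' partition function
\[
Z_G(\beta) \;=\; \sum_{f\in M^n} e^{-n\beta\, \dd(\distr_G(f)_r,\mu_r)}.
\]
A swap changes $\dd(\distr_G(f)_r,\mu_r)$ by at most $\theta/n$ for every $f$, so $|\ln Z_G(\beta)-\ln Z_{G'}(\beta)|\le \beta\theta$ with no $\veps$-shift, and Azuma on the configuration model gives sub-Gaussian concentration of $\tfrac1n\ln Z_{G_n}(\beta)$ around a deterministic $s_n(\beta)$. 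The link back to $H_G$ is the two-sided sandwich
\[
\ln F_G(\veps) - n\beta\veps \;\le\; \ln Z_G(\beta) \;\le\; \ln 2 + \bigl(\ln F_G(\veps)\bigr)\vee\bigl(n(\ln|M|-\beta\veps)\bigr),
\]
and by choosing $\beta=\beta_\veps\to\infty$ with $\beta_\veps\veps\to 0$ (and $\beta_\veps^2\delta(\veps)\to 0$ to beat the concentration scale) one transfers concentration from $\tfrac1n\ln Z_{G_n}(\beta_\veps)$ to $H_{G_n}(\mu,r,\cdot)$ at a slightly dilated $\veps$. This exponential smoothing is the missing idea in your proposal; once you have it, the quantile-collapsing argument you outlined goes through.
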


By Lemma \ref{le:hconst}, we may consider the common value of the entropies: for all $\alpha \in (0,1)$,
$$
\bar h(\mu,r)  = \bar h(\mu,r,\alpha) \quad \hbox{ and }\quad  \underline h(\mu,r) = \underline h(\mu,r,\alpha).
$$

By construction,  $\bar h(\mu,r)$ and  $\underline h(\mu,r)$ are the growth rates of the number of coloring of a random $d$-regular graph whose $r$-neighborhood is close to $\mu_r$. Finally, since $\bar h(\mu,r) $ and $\underline h(\mu,r)$ are non-increasing in $r$, we may define the {\em upper and lower sofic entropies} as 
$$
\bar h(\mu)  = \lim_{r \to \infty} \bar h(\mu,r) \quad \hbox{ and }\quad  \underline h(\mu) = \lim_{r \to \infty} \underline h(\mu,r).
$$

Taking the limit as $\alpha \to 1$, for $h \geq 0$, the inequality $\bar h(\mu) \geq h$ is equivalent to the existence of a vanishing sequence $(\veps_n)$ and  such that 
$$
\limsup_{n \to \infty}  \dP ( H_{G_n} (\mu, 1/\veps_n, \veps_n) \geq (h - \veps_n)_+ ) = 1,
$$
where $(x)_+  = x  \vee 0$,
and similarly for $\underline h(\mu)$. Since for $\nu,\mu \in \cP ( \cGb_M)$, $\dd(\nu_r,\mu_r)$ converges to $\dd(\nu,\mu)$ as $r \to \infty$, the sofic entropy is thus closely related to the typicality: 
\begin{lemma}\label{le:typent}
Let $\mu \in \cI_d(M)$. We have $\bar h(\mu) \geq 0$ (resp. $\underline h(\mu) \geq 0$) if and only if $\mu$ is weakly (resp. strongly) typical. 
\end{lemma}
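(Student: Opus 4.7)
The plan is to translate both sides of the equivalence into parallel statements about the probabilities
\begin{align*}
P_n(\veps) &= \dP\bigl(\exists f \in M^n : \dd(\distr_{G_n}(f), \mu) \leq \veps\bigr),\\
Q_n(r,\veps) &= \dP\bigl(\exists f \in M^n : \dd(\distr_{G_n}(f)_r, \mu_r) \leq \veps\bigr),
\end{align*}
and then bridge the two via the topological relationship between $\dd(\nu,\mu)$ and the restriction distances $\dd(\nu_r,\mu_r)$.

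\textbf{Unpacking the entropies.} Since $H_{G_n}(\mu, r, \veps) \in \{-\infty\} \cup [0,\infty)$, the quantile $h_n(\mu, r, \veps, \alpha)$ is either $-\infty$ or non-negative, and $h_n \geq 0$ is equivalent to $\dP(\mathcal F_{G_n}(\mu, r, \veps) \neq \emptyset) = Q_n(r, \veps) \geq \alpha$. Hence $\limsup_n h_n \geq 0$ iff $\limsup_n Q_n(r, \veps) \geq \alpha$, with the analogous statement for $\liminf$. Invoking Lemma~\ref{le:hconst} to let $\alpha \nearrow 1$ and using the monotonicity of $\bar h(\mu,r)$, $\underline h(\mu,r)$ in $r$, I obtain: $\bar h(\mu) \geq 0$ iff $\limsup_n Q_n(r, \veps) = 1$ for all $r \geq 0$ and $\veps > 0$, and $\underline h(\mu) \geq 0$ iff $\liminf_n Q_n(r, \veps) = 1$ for all such $r, \veps$. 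An analogous monotonicity argument in $\veps$ gives: weak (resp.\ strong) typicality is equivalent to $\limsup_n P_n(\veps) = 1$ (resp.\ $\liminf_n P_n(\veps) = 1$) for every $\veps > 0$.

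\textbf{Bridging via restriction.} Two topological facts are used. First, the restriction map $\nu \mapsto \nu_r$ is a contraction for $\dd$, giving $\dd(\nu_r, \mu_r) \leq \dd(\nu, \mu)$ and hence $P_n(\veps) \leq Q_n(r, \veps)$. Second, since $\dd$ generates the weak topology on $\cP(\cGb_M)$---which is itself generated by the restrictions $\nu \mapsto \nu_r$---the convergence $\dd(\nu_r, \mu_r) \to \dd(\nu, \mu)$ noted in the paper, combined with the contraction, yields: for every $\delta > 0$ there exist $R \geq 0$ and $\eta > 0$ such that $\dd(\nu_R, \mu_R) \leq \eta$ implies $\dd(\nu, \mu) \leq \delta$ uniformly in $\nu$; consequently $Q_n(R, \eta) \leq P_n(\delta)$.

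\textbf{Conclusion and the main obstacle.} If $\mu$ is weakly typical, then for every $r, \veps > 0$, $\limsup_n Q_n(r, \veps) \geq \limsup_n P_n(\veps) = 1$, so $\bar h(\mu) \geq 0$; conversely, if $\bar h(\mu) \geq 0$, then for every $\delta > 0$, taking $R, \eta$ as above, $\limsup_n P_n(\delta) \geq \limsup_n Q_n(R, \eta) = 1$, so $\mu$ is weakly typical. The strong case is identical with $\liminf$ in place of $\limsup$. The only delicate point is the \emph{uniformity in $\nu$} of the second topological fact; it is cleanest to verify using a concrete metric such as $\dd(\nu, \mu) = \sum_r 2^{-r} d_r(\nu_r, \mu_r)$ with each $d_r$ bounded by $1$, which directly yields $\dd(\nu, \mu) \leq 2\eta + 2^{-R}$ whenever $d_R(\nu_R, \mu_R) \leq \eta$. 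Since both the entropies and typicality are invariant under the choice of metric generating the weak topology, this specialisation is legitimate.
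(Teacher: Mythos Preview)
Your proof is correct and follows essentially the same route as the paper's own argument, which appears only as the short paragraph preceding the lemma: take $\alpha \to 1$ via Lemma~\ref{le:hconst}, observe that $H_{G_n}(\mu,r,\veps)\geq 0$ is exactly the event $\mathcal F_{G_n}(\mu,r,\veps)\neq\emptyset$, and pass between the radius-$r$ and full distances using $\dd(\nu_r,\mu_r)\to\dd(\nu,\mu)$. You are considerably more careful than the paper about the uniformity in $\nu$ needed for the direction $Q_n(R,\eta)\leq P_n(\delta)$, and your fix---specialising to a metric of the form $\sum_r 2^{-r}d_r$---is exactly right and legitimate for the reasons you give. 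One small imprecision: the equivalence ``$\limsup_n h_n\geq 0$ iff $\limsup_n Q_n(r,\veps)\geq\alpha$'' fails for a fixed $\alpha$ (take $Q_n=\alpha-1/n$), but your conclusion after letting $\alpha\nearrow 1$ is nonetheless correct, since ``for every $\alpha<1$, $Q_n\geq\alpha$ infinitely often'' is equivalent to $\limsup_n Q_n=1$.
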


This work is notably motivated the following conjecture which is connected to the notion of right convergence, see \cite{MR3256814} and Subsection \ref{subsec:factor} below.

\begin{conjecture}\label{conj:entropy}
For all $\mu \in\cI_d(M)$, we have $\underline h(\mu) = \bar h(\mu)$. In particular, $\mu$ is weakly typical if and only if it is strongly typical.
\end{conjecture}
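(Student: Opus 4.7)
The plan is to reduce Conjecture \ref{conj:entropy} to a concentration-of-measure statement for the random quantity $H_{G_n}(\mu,r,\veps)$. By Lemma \ref{le:hconst}, the equality $\underline h(\mu,r) = \bar h(\mu,r)$ is precisely the assertion that $\alpha \mapsto h_n(\mu,r,\veps,\alpha)$ has asymptotically constant upper and lower limits as $\alpha$ varies over $(0,1)$, and this is implied by any bound of the form $\dP(H_{G_n}(\mu,r,\veps) < H_{G_n}(\mu,r,\veps') - o(1)) \to 0$ with $\veps' < \veps$ fixed. The second assertion of the conjecture then follows from Lemma \ref{le:typent}. So the entire problem boils down to showing a concentration inequality for $H_{G_n}(\mu,r,\veps)$, stable under a small loss in the tolerance $\veps$.

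The natural first candidate is a switching-martingale argument in the configuration model. A single switching of two pairs of half-edges alters at most $O(d^r)$ of the rooted $r$-neighborhoods of $G_n$, so for every $f \in M^n$ the measure $\distr_{G_n}(f)_r$ is perturbed by at most $O(d^r/n)$ in L\'evy--Prohorov distance. This yields the one-sided containment
$$
\mathcal F_G(\mu,r,\veps - c\,d^r/n) \;\subseteq\; \mathcal F_{G'}(\mu,r,\veps),
$$
for any two configurations $G,G'$ differing by one switching, where $c$ depends only on $\dd$. In a Doob decomposition of $H_{G_n}(\mu,r,\veps)$ along $nd/2$ successive switchings one would like to turn this into a bounded-differences inequality and, via Azuma, deduce $o(1)$ concentration of $H_{G_n}(\mu,r,\veps)$ once $r$ is fixed and $n \to \infty$.

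The main obstacle, and the reason this naive approach fails, is that the $\veps$-shifts \emph{accumulate}: $nd/2$ switchings each costing $O(d^r/n)$ sum to a total shift of $\Theta(d^r)$, which is an $O(1)$ constant rather than $o(1)$. Hence the switching martingale can at best compare $H_{G_n}(\mu,r,\veps)$ with $H_{G_n}(\mu,r,\veps')$ for $\veps'$ substantially smaller than $\veps$, and one is left having to rule out a large drop in entropy between these two levels. Such a drop is exactly what a phase coexistence --- two macroscopic populations of $G_n$ with and without good colourings of the given type --- would produce, so Conjecture \ref{conj:entropy} is equivalent to a non-trivial $0$--$1$ law excluding such coexistence. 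This is precisely the heart of the difficulty.

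A more promising route, consistent with what the abstract indicates the authors can push through for a specific family, is the Guerra--Toninelli interpolation strategy of \cite{MR3482664,MR4032871}: tilt the uniform measure on $M^n$ by $\exp(-\lambda\, \dd(\distr_{G_n}(f)_r,\mu_r))$, prove existence of the limiting free energy of this tilted Gibbs ensemble by interpolating between $G_n$ and a suitable Poisson mean-field model, and then recover concentration of $\log |\mathcal F_{G_n}(\mu,r,\veps)|$ by a Laplace argument as $\lambda \to \infty$. Carrying this program out uniformly over all $\mu \in \cI_d(M)$ would simultaneously settle the right-convergence conjecture for random $d$-regular graphs, so I expect Conjecture \ref{conj:entropy} in full generality to require genuinely new input; a realistic partial target is to establish it for classes of $\mu$ (such as edge Markov processes) where the interpolation and second-moment method are tractable and the phase-coexistence obstruction can be ruled out directly.
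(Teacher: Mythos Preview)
The statement is a \emph{conjecture}, not a theorem: the paper does not prove it and explicitly leaves it open, remarking only that it is connected to right-convergence and to Guerra's interpolation method. So there is no ``paper's own proof'' to compare against. Your proposal is, appropriately, a discussion of obstructions rather than a proof, and your eventual conclusion --- that the full statement likely requires genuinely new input, with edge-Markov processes a realistic partial target --- is exactly the paper's position (Theorems \ref{th:2ndmoment} and \ref{th:edgeMarkov} establish $\underline h(\mu)=\bar h(\mu)$ under a second-moment coupling condition).

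That said, your opening reduction misdiagnoses the difficulty. You write that $\underline h(\mu,r)=\bar h(\mu,r)$ would follow from a concentration bound for $H_{G_n}(\mu,r,\veps)$ around its median, and you then attempt a switching martingale. But the paper already \emph{proves} precisely this concentration: Lemma \ref{le:concentration} shows that the soft-max $\frac1n\ln Z_{G_n}(\beta)$ concentrates at scale $O(\beta/\sqrt n)$ around a deterministic value $s_n(\beta)$, and Theorem \ref{th:concmain} converts this into the $\alpha$-independence of $\bar h(\mu,r,\alpha)$ and $\underline h(\mu,r,\alpha)$ (Lemma \ref{le:hconst}). The gap between $\bar h$ and $\underline h$ is therefore \emph{not} a fluctuation issue at fixed $n$; it is the question of whether the deterministic sequence $s_n(\beta)$ (equivalently, the median of $H_{G_n}$) converges as $n\to\infty$. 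Your switching argument, even if the $\veps$-shift problem were handled, would only reprove the concentration the paper already has; it cannot by itself touch the $\limsup_n$ versus $\liminf_n$ discrepancy. Your later paragraph on interpolation is closer to the mark, since that method is designed precisely to show convergence in $n$ of free energies.
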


In this work, we will compute the entropy $\underline h(\mu) = \bar h(\mu)$ for a large class of invariant measures $\mu \in \cI_d(M)$. This class is a class of processes where a second moment method can be applied. In a subsequent work, we will use more advanced statistical physics methods to refine our criterion.  This conjecture might also be related to the Guerra's interpolation method \cite{MR1957729} developed in the context of random graphs notably in \cite{MR1972121,MR3161470,MR3256814,MR3482664,MR4032871,MR3876913}.

\subsection{Annealed entropy}

If $r \geq 0$ is an integer and $S$ is a subset of $T_d$, we define $B_r (S)$ the subset of vertices of $T_d$ at distance at most $r$ from a vertex in $S$. For ease of notation, for $r \geq 1$, we define $S_{r} = B_{r} (o)$ as the ball of radius $r$ around the root of $T_d$ and $E_r = B_{r-1}  (\{o,1\})$ is the set of vertices at distance $r-1$ from the edge $\{o,1\}$ of $T_d$.

If $X$ is an invariant process with law $\mu \in \cI_d(M)$ and $r \geq 1$ integer, we set 
\begin{equation}\label{eq:defSigma}
\Sigma_r (X) = \Sigma_r (\mu) = H (X_{S_r}) - \frac d 2 H (X_{E_r}),
\end{equation}
where $X_S$ is the restriction of $X$ to the subset $S \subset T_d$ and $H$ is the usual Shannon entropy: if $Y$ is a random variable taking value a finite set $F$, then
$$
H(Y) = -  \sum_{x \in F} \dP( Y = x) \ln  \dP ( Y = x).
$$

By abusing notation, we denote entropy in the same way for both random variables and their distributions. It follows from \cite{eigenvector,MR3405616} (see Subsection \ref{subsec:SigmaM} below for details) that $\Sigma_r(\mu)$ is non-increasing in $r$. We may thus define
$$
\Sigma(\mu) = \lim_{r \to \infty} \Sigma_r(\mu).
$$

Note that the law of $X_{S_r}$ is $\mu_r$ and that the law of $X_{E_r}$ is a marginal of $\mu_r$. The quantities $\Sigma_r(\mu)$ and $\Sigma(\mu)$ will be called the annealed entropy of $\mu_r$ and $\mu$, the reason will be clear in the forthcoming Subsection \ref{subsec:SigmaC}. The following first moment bound is essentially contained in \cite{typical,MR3405616}. 

\begin{theorem}\label{th:1stmoment}
For any $\mu \in \cI_d(M)$ and integer $r \geq 1$, we have
$$
\bar h (\mu,r ) \leq \Sigma_r(\mu)  \quad \hbox{ and }\quad  \bar h (\mu) \leq \Sigma(\mu) . 
$$
\end{theorem}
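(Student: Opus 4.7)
The plan is a standard first moment method. The key task is to bound the expectation $\Ex|\mathcal F_{G_n}(\mu,r,\veps)|$ from above by $e^{n(\Sigma_r(\mu) + \eta(\veps))}$ where $\eta(\veps) \to 0$ as $\veps \to 0$; Markov's inequality then converts this into the desired high-probability upper bound on $H_{G_n}(\mu,r,\veps)$, and hence on $\bar h(\mu,r)$.

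\emph{Step 1 (First moment expansion).} I would write
$$
\Ex\bigl|\mathcal F_{G_n}(\mu,r,\veps)\bigr| = \sum_{f \in M^n} \dP\bigl(\dd(\distr_{G_n}(f)_r,\mu_r) \leq \veps\bigr),
$$
and group the colorings $f$ by their empirical $r$-ball type $\nu_f$. Since only polynomially many types in $\cP(\cGb_M)$ are within $\veps$ of $\mu_r$, it suffices to bound the number of $f \in M^n$ realizing a fixed admissible type $\nu$ with $\dd(\nu,\mu_r) \leq \veps$. Because the expected number of cycles of length at most $2r$ in $G_n$ is $O(1)$ (see \cite{MR1864966}), almost every $r$-ball of $G_n$ is isomorphic to $S_r \subset T_d$, and one may restrict attention to the locally tree-like event, paying only an $e^{o(n)}$ factor.

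\emph{Step 2 (Bethe counting).} The identity $\Sigma_r(\mu) = H(X_{S_r}) - (d/2) H(X_{E_r})$ is precisely the Bethe free entropy at level $r$: a locally tree-like $d$-regular graph on $n$ vertices has $n$ vertex-balls and $nd/2$ edge-balls, each vertex-ball contributing an entropy $H(X_{S_r})$ and each edge-ball being overcounted with weight $H(X_{E_r})$. A Shearer/Han-type entropy inequality (or equivalently a direct chain-rule computation revealing the coloring one vertex at a time and accounting for the already-fixed overlap on each new edge-ball) gives
$$
\bigl|\{f \in M^n : \dd(\distr_{G_n}(f)_r,\mu_r)\leq \veps\}\bigr| \leq \exp\bigl(n H(X_{S_r}) - (nd/2) H(X_{E_r}) + n\eta(\veps) + o(n)\bigr),
$$
where $\eta(\veps) \to 0$ as $\veps \to 0$ by weak continuity of Shannon entropy on the simplex of types close to $\mu_r$. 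This bound is essentially the content of the arguments in \cite{typical,MR3405616}.

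\emph{Step 3 (Markov and limits).} Markov's inequality then yields
$$
\dP\bigl(|\mathcal F_{G_n}(\mu,r,\veps)| \geq e^{n(\Sigma_r(\mu) + 2\eta(\veps))}\bigr) \leq e^{-n\eta(\veps) + o(n)} \to 0.
$$
Hence for every $\alpha \in (0,1)$, $h_n(\mu,r,\veps,\alpha) \leq \Sigma_r(\mu) + 2\eta(\veps)$ once $n$ is large. Taking $\limsup_n$ followed by $\veps \to 0$ gives $\bar h(\mu,r) \leq \Sigma_r(\mu)$; letting $r \to \infty$ and using monotonicity of both sides yields $\bar h(\mu) \leq \Sigma(\mu)$.

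The main obstacle is the Bethe counting in Step~2: although the heuristic $n H(X_{S_r}) - (nd/2) H(X_{E_r})$ is transparent, turning it into a rigorous upper bound requires careful bookkeeping of the overlaps between the $r$-balls $B_r(v)$ and edge-balls $B_{r-1}(e)$, as well as absorbing the contribution of the rare short cycles into an $o(n)$ correction. Since this combinatorial computation was already carried out in \cite{typical,MR3405616}, the proof reduces to invoking those calculations and combining them with the Markov step above.
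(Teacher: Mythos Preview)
Your proposal is correct and follows essentially the same first-moment strategy as the paper: Markov's inequality applied to $|\mathcal F_{G_n}(\mu,r,\veps)|$, combined with the fact that $\frac{1}{n}\log\dE|\mathcal F_{G_n}(\mu,r,\veps)|\to\Sigma_r(\mu)$ as $n\to\infty$ and $\veps\to 0$.

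There is one presentational difference worth noting. The paper packages the counting step as Theorem~\ref{th:combS}, which is a purely \emph{annealed} statement about $\Sigma_n(\mu,r,\veps)=\frac{1}{n}\log\dE|\mathcal F_{G_n}(\mu,r,\veps)|$, proved by invoking the exact enumeration of colored graphs in \cite{DA19,MR3405616}. Your Step~2 instead sketches a \emph{quenched} bound on $|\mathcal F_G(\mu,r,\veps)|$ for a fixed locally tree-like $G$, and then absorbs the bad-cycle event into an $e^{o(n)}$ factor. This works, but the phrase ``Shearer/Han-type entropy inequality'' is a bit misleading---the Bethe form $H(X_{S_r})-\tfrac{d}{2}H(X_{E_r})$ involves a subtraction and is not literally a Shearer bound; the actual argument in \cite{typical,MR3405616} is closer to your parenthetical chain-rule description. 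Also, in the annealed expectation you need the non-tree-like event to contribute $e^{o(n)}$ after being multiplied by the crude bound $|M|^n$, which requires that the probability of having more than $\delta n$ short cycles is $e^{-cn}$ rather than merely $o(1)$; this is true but is a slightly stronger input than ``the expected number of short cycles is $O(1)$''. The paper's annealed route sidesteps both of these bookkeeping points.
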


As a corollary, we recover the "star-edge inequality" of \cite{typical,eigenvector} which is a necessary condition of typicality: if $\mu$ is weakly typical then by Lemma \ref{le:typent}, $\bar h(\mu) \geq 0$ and thus, by Theorem \ref{th:1stmoment},  $\Sigma(\mu) \geq 0$.

\begin{corollary}[\cite{typical,eigenvector}]
If $\mu \in \cI_d(M)$ is a weakly typical process then $\Sigma(\mu) \geq 0$. 
\end{corollary}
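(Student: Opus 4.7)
The plan is simply to chain together the two preceding results. Assuming $\mu \in \cI_d(M)$ is weakly typical, I would first invoke Lemma \ref{le:typent} to translate weak typicality into the entropy inequality $\bar h(\mu) \geq 0$. I would then apply the first moment bound of Theorem \ref{th:1stmoment}, namely $\bar h(\mu) \leq \Sigma(\mu)$, to obtain $0 \leq \bar h(\mu) \leq \Sigma(\mu)$, which is exactly the claim.

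It is worth spelling out in a line or two why this chain makes sense. Weak typicality forces, with probability bounded away from zero, the existence of colorings of $G_n$ whose Benjamini--Schramm statistics are $\veps$-close to $\mu$. In particular $|\mathcal F_{G_n}(\mu,r,\veps)| \geq 1$ with non-vanishing probability, so the median quantity $h_n(\mu,r,\veps,\alpha)$ is non-negative for $\alpha$ small, and taking $\veps \to 0$, $n \to \infty$, $r \to \infty$ yields $\bar h(\mu) \geq 0$; this is the content of Lemma \ref{le:typent}. The annealed entropy $\Sigma(\mu)$ then enters as the natural upper bound on this quenched growth rate via the standard configuration-model first moment computation packaged inside Theorem \ref{th:1stmoment}: the ball entropy $H(X_{S_r})$ counts compatible $r$-neighborhood labelings while $(d/2)H(X_{E_r})$ subtracts the entropy contribution of the $nd/2$ edge-pairing constraints.

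Because the statement is a direct logical consequence of the two previously established results, there is no genuine obstacle here; the substantive work has already been done in the proof of Theorem \ref{th:1stmoment}. I would therefore present the corollary as a one-line deduction, with no extra machinery required.
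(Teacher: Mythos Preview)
Your proposal is correct and matches the paper's own argument exactly: the corollary is deduced in one line by combining Lemma \ref{le:typent} (weak typicality gives $\bar h(\mu)\geq 0$) with Theorem \ref{th:1stmoment} ($\bar h(\mu)\leq \Sigma(\mu)$).
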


The main result of this paper is a matching lower bound for a large class of invariant processes. To this end, we first recall the notion of coupling restricted to our setting. Let $M_1$ and $M_2$ be two finite sets, $X_1$ and $X_2$ be two random variables on $M_1^{T_d}$ and $M_2^{T_d}$ with respective laws $\mu_1$ and $\mu_2$. A coupling of $\mu_1$ and $\mu_2$ is a distribution $\nu$ on $(M_1 \times M_2)^{T_d}$ such that if $Y = (Y_1,Y_2)$ has law $\nu$, $Y_i$ has law $\mu_i$ for $i = 1,2$. If $X_i$ is an invariant process for $i=1,2$, we say that $\nu$ or $Y$ is an invariant coupling if $\nu \in \cI_d(M_1 \times M_2)$.

\begin{theorem}\label{th:2ndmoment}
Let $\mu \in \cI_d(M)$.  For any  integer $r \geq 1$, if all invariant couplings $\nu$ of $\mu$ and $\mu$ satisfy $\Sigma_r(\nu) \leq 2\Sigma_r(\mu)$ then $\underline h (\mu,r) = \bar h (\mu,r)  = \Sigma_r(\mu).$
In particular, if the above condition holds for an increasing sequence of integers $(r_k)_{k \geq 1}$ then 
$
\underline h (\mu) = \bar h (\mu)  = \Sigma(\mu).
$
\end{theorem}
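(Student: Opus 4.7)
\textbf{Proof proposal for Theorem \ref{th:2ndmoment}.} The upper bound $\bar h(\mu,r) \leq \Sigma_r(\mu)$ is exactly Theorem \ref{th:1stmoment}, so the plan is to prove the matching lower bound $\underline h(\mu,r) \geq \Sigma_r(\mu)$ by a second moment argument applied to $N_n := |\mathcal{F}_{G_n}(\mu,r,\veps)|$.

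First I would set up a sharp first moment estimate. Writing $N_n = \sum_{f \in M^n} \IND_{\dd(\distr_{G_n}(f)_r,\mu_r) \leq \veps}$, I would compute $\Ex[N_n]$ using the configuration model (or, equivalently, the permutation model) for $G_n$. The probability that a fixed $f \in M^n$ lies in $\mathcal{F}_{G_n}(\mu,r,\veps)$ depends essentially only on the empirical distribution of the $r$-neighborhoods of $f$. A type-counting / Laplace argument, of the same nature as the one used in the proof of Theorem \ref{th:1stmoment}, should yield
\begin{equation*}
\Ex[N_n] = \exp\PAR{n \Sigma_r(\mu) + o(n)} \qquad (\veps \to 0 \text{ slowly}, \ n \to \infty).
\end{equation*}
Here the gap between the star entropy $H(X_{S_r})$ (which counts ways to color each $r$-ball) and $\frac{d}{2}H(X_{E_r})$ (which penalizes the matching constraints along edges) is precisely what appears in $\Sigma_r$.

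Next I would apply the same machinery to the pair $(f_1,f_2) \in M^n \times M^n$, viewed as a single coloring $f \in (M \times M)^n$. The condition that both $f_1,f_2 \in \mathcal{F}_{G_n}(\mu,r,\veps)$ forces the empirical $r$-neighborhood distribution of $f$ to be close, in the Benjamini-Schramm sense, to some probability measure $\nu_r$ on $(M\times M)$-colored $r$-balls whose two marginals are $\mu_r$. By Benjamini-Schramm convergence of $G_n$ to $T_d$, any such $\nu_r$ must be arbitrarily close (as $\veps \to 0$) to the restriction to $S_r$ of an element $\nu \in \cI_d(M\times M)$ that is an invariant coupling of $\mu$ with itself. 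Partitioning the pairs $(f_1,f_2)$ according to a fine enough (but finite, for fixed $r$ and $M$) discretization of such couplings $\nu$, and applying the first-moment estimate to each slice with color set $M \times M$, one obtains
\begin{equation*}
\Ex[N_n^2] \leq e^{o(n)} \max_{\nu} \exp\PAR{n \Sigma_r(\nu)},
\end{equation*}
the maximum being over invariant couplings $\nu$ of $\mu$ with $\mu$. The standing hypothesis $\Sigma_r(\nu) \leq 2 \Sigma_r(\mu)$ then gives $\Ex[N_n^2] \leq e^{o(n)}(\Ex[N_n])^2$.

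Finally, the Paley-Zygmund inequality yields
\begin{equation*}
\dP\PAR{N_n \geq \tfrac12 \Ex[N_n]} \geq \frac{(\Ex[N_n])^2}{4\, \Ex[N_n^2]} \geq \frac14 e^{-o(n)},
\end{equation*}
so for $n$ large (and $\veps$ small), $N_n \geq \exp(n \Sigma_r(\mu) - o(n))$ with probability at least, say, $1/8$. This shows $\underline h(\mu, r, 1/8) \geq \Sigma_r(\mu)$, and by Lemma \ref{le:hconst} this is the desired bound $\underline h(\mu,r) \geq \Sigma_r(\mu)$. The second assertion follows by taking $r = r_k \to \infty$ and using the definitions of $\bar h(\mu),\underline h(\mu),\Sigma(\mu)$ as limits.

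The main technical obstacle I foresee is not the Paley-Zygmund step itself, but justifying the passage from empirical joint distributions on the finite graph $G_n$ to \emph{invariant} couplings $\nu \in \cI_d(M \times M)$: one must argue that accumulation points of the joint empirical neighborhood distribution inherit invariance from the local convergence of $G_n$ to $T_d$, and that the discretization of the space of such couplings can be made fine enough to apply the hypothesis while remaining subexponential in $n$. Once this is done cleanly, controlling error terms (so that one can exchange the order of $\lim_{\veps\to 0}$, $\limsup_n$, and the Laplace approximation) is routine but needs care, especially to ensure that the $o(n)$ corrections in the first and second moment estimates are uniform over the relevant discretization of $\nu$.
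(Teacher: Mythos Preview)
Your overall strategy is exactly the paper's: first moment via Theorem~\ref{th:combS}, second moment by an $\veps$-net of couplings $q\in\cI_{d,r}(M^2)$ of $p=\mu_r$ with itself (each such $q$ extends to an invariant coupling of $\mu$ with $\mu$ via the $r$-Markov process $\mu(q)$, which resolves your worry about reaching $\cI_d(M\times M)$), then Paley--Zygmund. There is, however, a genuine gap in your last step. The first and second moment estimates coming from Theorem~\ref{th:combS} carry errors of size $e^{\pm n\delta(\veps)}$ for \emph{fixed} $\veps$, with $\delta(\veps)\to 0$ only as $\veps\to 0$; there is no reason for these to cancel. Consequently Paley--Zygmund only yields
\[
\dP\!\left(N_n \geq \tfrac12\,\Ex N_n\right) \;\geq\; c\,\exp\!\bigl(-n\,\delta(\veps)\bigr),
\]
which is subexponential in $n$ but may well tend to $0$. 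You therefore cannot conclude ``probability at least $1/8$'', and Lemma~\ref{le:hconst} does not directly apply.

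What closes the argument is the stronger concentration statement Theorem~\ref{th:concmain} (of which Lemma~\ref{le:hconst} is merely a corollary): a lower bound of the form $\liminf_n \tfrac1n\log\dP\bigl(H_{G_n}(\mu,r,\veps)\geq h\bigr)\geq -\delta(\veps)$ with $\delta(\veps)\to 0$ already forces $\underline h(\mu,r)\geq h$. Its proof passes through a smoothed partition function $Z_G(\beta)=\sum_f e^{-n\beta\,\dd(\distr_G(f)_r,\mu_r)}$, for which one has genuine Azuma-type concentration on the configuration model, and then sandwiches $\log|\mathcal F_G(\mu,r,\veps)|$ between values of $\log Z_G(\beta)$ at suitably tuned $\beta$. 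This bootstrapping from ``probability not exponentially small'' to ``probability $\geq\alpha$ for every $\alpha<1$'' is the missing idea in your outline.
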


We note that the bound $\Sigma_r(\nu) \leq 2\Sigma_r(\mu)$ is attained for the independent coupling $Y = (X_1,X_2)$ with $X_i$ independent with law $\mu$. 
Note also that if $\nu$ is the trivial coupling of $\mu$ and $\mu$, that is $Y = (X,X)$ with $X$ with law $\mu$, we find $\Sigma_r(\nu) = \Sigma_r(\mu)$. Hence, under the condition of Theorem \ref{th:2ndmoment}, we have  $\Sigma_r(\mu) \leq 2 \Sigma_r(\mu)$ or equivalently $\Sigma_r(\mu) \geq 0$. As a corollary, by Lemma \ref{le:typent}, we thus obtain the following sufficient condition for typicality. 

\begin{corollary}
Let $\mu \in \cI_d(M)$ and $(r_k)_{k \geq 1}$ an increasing sequence of integers be such that for all invariant couplings $\nu$ of $\mu$ and $\mu$ and all $k\geq 1$ we have $\Sigma_{r_k}(\nu) \leq 2\Sigma_{r_k}(\mu)$. Then $\mu$ is strongly typical. 
\end{corollary}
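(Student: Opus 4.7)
The plan is to reduce the corollary to Theorem \ref{th:2ndmoment} and Lemma \ref{le:typent} with only one small intermediate verification. By Theorem \ref{th:2ndmoment}, the hypothesis $\Sigma_{r_k}(\nu) \leq 2\Sigma_{r_k}(\mu)$ holding for every invariant coupling $\nu$ of $\mu$ with itself and every $k \geq 1$ already delivers the identity $\underline h(\mu) = \bar h(\mu) = \Sigma(\mu)$. By Lemma \ref{le:typent}, strong typicality then follows as soon as we can certify $\Sigma(\mu) \geq 0$. So the only substantive step is to extract this nonnegativity from the hypothesis.

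For this, I would use the \emph{trivial} invariant coupling of $\mu$ with itself, namely $Y = (X,X)$ where $X$ has law $\mu$. Since $X$ is invariant under the automorphisms of $T_d$, so is $Y$, hence $\nu = \mathrm{law}(Y) \in \cI_d(M \times M)$. The key observation is that $\Sigma_r(\nu) = \Sigma_r(\mu)$ for every $r \geq 1$: indeed, $Y_S = (X_S,X_S)$ is a deterministic function of $X_S$ (and vice versa) for any finite $S \subset T_d$, so $H(Y_{S_r}) = H(X_{S_r})$ and $H(Y_{E_r}) = H(X_{E_r})$, and the definition \eqref{eq:defSigma} of $\Sigma_r$ gives the claim. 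Plugging this $\nu$ into the hypothesis yields $\Sigma_{r_k}(\mu) \leq 2 \Sigma_{r_k}(\mu)$ for all $k \geq 1$, hence $\Sigma_{r_k}(\mu) \geq 0$.

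To conclude, recall that $\Sigma_r(\mu)$ is non-increasing in $r$, so $\Sigma(\mu) = \lim_{r \to \infty} \Sigma_r(\mu) = \lim_{k \to \infty} \Sigma_{r_k}(\mu) \geq 0$, where the limit along the subsequence $(r_k)$ agrees with the full limit by monotonicity. Combined with Theorem \ref{th:2ndmoment}, this gives $\underline h(\mu) = \Sigma(\mu) \geq 0$, and Lemma \ref{le:typent} then yields that $\mu$ is strongly typical.

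There is essentially no hard step: the entire content has been placed in Theorem \ref{th:2ndmoment}, and the corollary is the natural packaging of that result together with the trivial-coupling identity and the definition of strong typicality via $\underline h$. The one place where a reader could stumble is the identity $\Sigma_r(\nu) = \Sigma_r(\mu)$ for the diagonal coupling, so I would make sure to write that computation explicitly.
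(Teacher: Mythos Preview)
Your proof is correct and follows essentially the same approach as the paper. The paper derives the corollary from Theorem \ref{th:2ndmoment} and Lemma \ref{le:typent} by noting that the trivial coupling $Y=(X,X)$ satisfies $\Sigma_r(\nu)=\Sigma_r(\mu)$, forcing $\Sigma_{r_k}(\mu)\geq 0$ and hence $\Sigma(\mu)\geq 0$; your write-up is simply a more explicit rendering of this same argument.
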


\subsection{Edge-Markov processes}

There is a specific class of processes in $\cI_d(M)$ for which it is possible to improve on Theorem \ref{th:2ndmoment}, the edge-Markov processes defined as follows. As above, for integer $r \geq 0$, $B_r(S)$ is the $r$-neighborhood of a subset $S$ in $T_d$. For $r \geq 1$, recall that $S_r = B_r (o)$ and $E_r = B_{r-1} (\{o,1\})$.

\begin{definition}[Edge-Markov process] \label{def:markov} A probability measure in $M^{T_d}$ is edge-Markov if conditioned on the value at an edge, the processes on the left and right subtrees of that edge are independent. 

More generally, for integer $r \geq 1$, a probability measure on $M^{T_d}$ is $r$-Markov if conditioned on the value at $B_{r-1}(e)$, the $(r-1)$-neighborhood of an edge $e$, the processes on the left and right subtrees of $e$ are independent (for $r=1$ we recover the edge-Markov process).
 \end{definition}

Let $\cI_{d,r}(M)$ denote the set of probability measures on $M^{S_r}$ that are invariant by automorphisms of $S_r$ and whose restriction to $E_r$ is invariant by switching the two sides of the edge $\{o,1\}$. If $\mu \in \cI_d(M)$, then, $\mu_r$, its restriction to $S_r$,  is in $\cI_{d,r}(M)$. Conversely, the following lemma is easy to see.

\begin{lemma} Let $r \geq 1$ be an integer and $p \in \cI_{d,r}(M)$. Then there is a unique $r$-Markov process $\mu(p)\in\mathcal{I}_d(M)$  such that the marginal of $\mu(p)$ on $S_r$ is equal to $p$.
\end{lemma}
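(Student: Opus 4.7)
My plan is to prove existence and uniqueness of $\mu(p)$ by combining the $r$-Markov property with the invariance properties of $p$, using the tree structure of $T_d$ to avoid cyclic compatibility obstructions.

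For uniqueness, suppose $\mu_1,\mu_2\in\cI_d(M)$ are both $r$-Markov with $\mu_i|_{S_r}=p$. The invariance of $\mu_i$ under automorphisms of $T_d$ combined with the invariance of $p$ under automorphisms of $S_r$ forces the marginal $\mu_i|_{B_r(v)}$ for any vertex $v$ to be the push-forward of $p$ under any automorphism of $T_d$ sending $o$ to $v$; the symmetry of $p$ makes this independent of the choice of automorphism. In particular, $\mu_i|_{B_{r-1}(e)}$ for any edge $e$ is the push-forward of $p|_{E_r}$, well-defined by the edge-flip invariance of $p|_{E_r}$. Hence $\mu_1$ and $\mu_2$ agree on every such local marginal. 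On any finite subtree $T'\supset S_r$, the joint law $\mu_i|_{T'}$ is then fully determined by these local marginals through the $r$-Markov conditional factorization: removing an edge $e$ from $T'$ splits it into two conditionally independent sub-regions given $X_{B_{r-1}(e)\cap T'}$, and this inductive decomposition---unambiguous since $T_d$ has no cycles---reduces the problem to local marginals already determined by $p$. Thus $\mu_1|_{T'}=\mu_2|_{T'}$ for every finite $T'$, and Kolmogorov's extension theorem gives $\mu_1=\mu_2$.

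For existence, define $\mu(p)$ by constructing a compatible family of finite-dimensional marginals $\{\mu_{T'}\}$ indexed by finite subtrees $T'\supset S_r$. For each such $T'$, take $\mu_{T'}$ to be the unique measure on $M^{T'}$ whose marginal on every $B_r(v)\cap T'$ (for $v\in T'$) is the appropriate push-forward of $p$ and which satisfies the $r$-Markov factorization at each edge of $T'$. This measure is built iteratively: start from $X_{S_r}\sim p$, then for each adjacent $r$-ball that shares an overlap $B_{r-1}(e)$ with the already-sampled region, sample the new vertices conditional on $X_{B_{r-1}(e)}$ using the kernel derived from $p$. Edge-flip invariance of $p|_{E_r}$ ensures that the push-forwards of $p$ into $B_r(v_1)$ and $B_r(v_2)$ for adjacent $v_1,v_2$ agree on the overlap $B_r(v_1)\cap B_r(v_2)=B_{r-1}(\{v_1,v_2\})$, so the construction does not depend on the sampling order. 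Kolmogorov extension produces $\mu(p)\in\cP(M^{T_d})$; the symmetry of the construction yields invariance under automorphisms of $T_d$, so $\mu(p)\in\cI_d(M)$, and by construction $\mu(p)|_{S_r}=p$ and the $r$-Markov property holds.

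The hard part is verifying the consistency of the local push-forwards on overlaps, i.e.\ that different orderings of the sampling process produce the same finite-dimensional distribution $\mu_{T'}$. Thanks to the tree structure (no cycles), this reduces to pairwise consistency across each edge, which is exactly the edge-flip invariance of $p|_{E_r}$; all remaining ambiguities in choosing automorphisms are absorbed by the invariance of $p$ under automorphisms of $S_r$, and Kolmogorov extension concludes without further conditions.
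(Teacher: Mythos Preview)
The paper does not actually prove this lemma; it is introduced with the phrase ``the following lemma is easy to see'' and no argument is given. Your proposal supplies exactly the standard argument one would expect here---uniqueness from the fact that the $r$-Markov property together with the local marginals pinned down by $p$ determines every finite-dimensional distribution, and existence via an iterative sampling/Kolmogorov extension using the edge-flip invariance of $p|_{E_r}$ for consistency on overlaps $B_r(v_1)\cap B_r(v_2)=B_{r-1}(\{v_1,v_2\})$. This is correct and is presumably what the authors had in mind.

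Two small points you may want to tighten if you write this up formally. First, in the uniqueness part you invoke the $r$-Markov factorization on a finite subtree $T'$; strictly speaking Definition~\ref{def:markov} is stated for the full tree, so you should note that conditional independence of the two sides given $X_{B_{r-1}(e)}$ passes to marginals, hence holds after restricting to $T'$. Second, ``the symmetry of the construction yields invariance under automorphisms of $T_d$'' is the right idea but deserves one more sentence: any automorphism $\gamma$ of $T_d$ sends your iterative construction started at $o$ to the same construction started at $\gamma(o)$, and you have already argued that the resulting measure is independent of the starting vertex and the order of extension (this is where the $\mathrm{Aut}(S_r)$-invariance of $p$ and the edge-flip symmetry are both used). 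With these clarifications the argument is complete.
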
 

If $p \in\cI_{d,r}(M)$, we define  $\Sigma (p) = \Sigma_r(\mu(p)) = H(X_{S_r}) - (d/2) H(X_{E_r})$ as in Equation \eqref{eq:defSigma}, where $X$ has law $p$.  As above, if $p_1$ and $p_2$ are probability measures on $M_1^{S_r}$ and $M_2^{S_r}$, a coupling of $p_1$ and $p_2$  is a probability measure on $M_1^{S_r} \times M_2^{S_r}$ whose marginals are $p_1$ and $p_2$. The following theorem is a strengthening of Theorem \ref{th:2ndmoment} for egde-Markov processes.

\begin{theorem}\label{th:edgeMarkov}
Let $r \geq 1$ be an integer and $p \in \cI_{d,r}(M)$. If for all couplings $q \in\cI_{d,r}(M^2)$ of $p$ and $p$,  we have $\Sigma(q) \leq 2 \Sigma(p)$  then $\underline h(\mu(p)) = \bar h(\mu(p))= \Sigma(p)$ and $\mu(p)$ is strongly typical.
\end{theorem}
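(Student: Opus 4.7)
The plan is to reduce Theorem~\ref{th:edgeMarkov} to Theorem~\ref{th:2ndmoment} applied to the invariant process $\mu(p)$. Since $\mu(p)$ is $r$-Markov, a direct chain rule computation (of the kind carried out in Subsection~\ref{subsec:SigmaM}) shows that its annealed entropy stabilizes: $\Sigma_s(\mu(p)) = \Sigma(p)$ for every $s \geq r$. Consequently, if the second-moment hypothesis of Theorem~\ref{th:2ndmoment} can be verified at every scale $s \geq r$, the ``in particular'' clause of that theorem immediately yields $\underline h(\mu(p)) = \bar h(\mu(p)) = \Sigma(p)$.

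The technical heart is therefore to upgrade the single scale-$r$ coupling hypothesis of Theorem~\ref{th:edgeMarkov} into an analogous bound at every scale $s \geq r$ for every invariant coupling in $\cI_d(M^2)$. Fix such an $s$ and an invariant coupling $\nu \in \cI_d(M^2)$ of $\mu(p)$ with itself. Restrict $\nu$ to $S_r$: the resulting measure $q = \nu_r$ lies in $\cI_{d,r}(M^2)$ (both symmetries are inherited from the $\mathrm{Aut}(T_d)$-invariance of $\nu$, using an edge-swap automorphism for the edge-side-switching symmetry), and its marginals are $\mu(p)_r = p$ on each coordinate. Hence $q$ is a coupling of $p$ with itself in the sense of the theorem, so the hypothesis gives $\Sigma(q) = \Sigma_r(q) \leq 2\Sigma(p)$. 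Invoking the monotonicity of $\Sigma_\bullet$ recalled just before Theorem~\ref{th:1stmoment}, and the fact that $\Sigma_r(\nu)$ depends only on $\nu_r$, we obtain
\[
\Sigma_s(\nu) \leq \Sigma_r(\nu) = \Sigma_r(q) \leq 2\Sigma(p) = 2\Sigma_s(\mu(p)),
\]
which is precisely the coupling bound required to apply Theorem~\ref{th:2ndmoment} at scale $s$ to $\mu(p)$.

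Strong typicality is an immediate corollary via Lemma~\ref{le:typent}: applying the hypothesis to the diagonal coupling $q = \mathrm{law}(X,X)$ with $X \sim p$ gives $\Sigma(p) = \Sigma(q) \leq 2\Sigma(p)$, so $\Sigma(p) \geq 0$. The only substantive ingredient beyond Theorem~\ref{th:2ndmoment} is the monotonicity $\Sigma_s(\nu) \leq \Sigma_r(\nu)$ on $\cI_d(M^2)$ for $s \geq r$; granting this, the $r$-Markov structure of $\mu(p)$ guarantees that no entropy is lost when passing to the infinite-scale limit, so the scale-$r$ hypothesis propagates to all larger scales without any strengthening. No genuine obstacle remains: this is essentially a bookkeeping argument on top of Theorem~\ref{th:2ndmoment} and the monotonicity of the annealed entropy.
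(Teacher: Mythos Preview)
Your proposal is correct and follows essentially the same route as the paper's proof. Both arguments verify the second-moment hypothesis of Theorem~\ref{th:2ndmoment} at every scale $s\geq r$ by restricting an arbitrary coupling to $S_r$, invoking the monotonicity $\Sigma_s\leq\Sigma_r$ from Lemma~\ref{th:minSigma}, applying the scale-$r$ hypothesis, and using the equality case of Lemma~\ref{th:minSigma} for the $r$-Markov process $\mu(p)$ to identify $\Sigma_s(\mu(p))=\Sigma(p)$; the only cosmetic difference is that the paper phrases the reduction via Equation~\eqref{eq:step2} and couplings in $\cI_{d,t}(M^2)$, whereas you invoke Theorem~\ref{th:2ndmoment} directly for couplings in $\cI_d(M^2)$. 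One small notational slip: you write $\Sigma_r(q)$ for $q=\nu_r\in\cI_{d,r}(M^2)$, but $\Sigma_r$ is defined on $\cI_d$; the intended quantity is $\Sigma(q)=\Sigma_r(\mu(q))$, which indeed equals $\Sigma_r(\nu)$ since both depend only on the $S_r$-marginal.
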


Theorem \ref{th:edgeMarkov} provides an easy to check criteria for typicality for edge Markov processes. In the course of the proof, we will need an important maximizing property satisfied by edge Markov processes (a closely related characterization can be found in \cite[Theorem 1.3]{MR3405616} and \cite[Lemma 10.1]{eigenvector}).

\begin{lemma} \label{th:minSigma}
Let $X \in \cI_d(M)$ and $r \geq 1$. We have 
$$
\Sigma_{r+1}(X) \leq \Sigma_r (X)
$$
with equality if and only if $X_{S_{r+1}}$ is a $r$-Markov process on $S_{r+1}$.
\end{lemma}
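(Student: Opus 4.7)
The plan is to derive the inequality by combining two submodular entropy bounds tailored to the geometry of $S_{r+1}$, and then to identify the equality condition with the equality cases of those two submodularities.

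\textbf{Step 1 (the inequality).} For $i = 1,\ldots,d$ let $E_{r+1}^{(i)} = B_r(\{o,i\})$ and let $L_{r+1}^{(i)}$ be the set of vertices at distance exactly $r+1$ from $o$ lying in the subtree $T^{(i)}$ rooted at neighbor $i$. A direct check gives $E_{r+1}^{(i)} = S_r \cup L_{r+1}^{(i)}$, whence $\bigcup_i E_{r+1}^{(i)} = S_{r+1}$ and $E_{r+1}^{(i)} \cap E_{r+1}^{(j)} = S_r$ for $i\neq j$. Iterating the submodular inequality $H(X_{A\cup B}) + H(X_{A\cap B}) \leq H(X_A) + H(X_B)$ along this union, and using invariance to identify $H(X_{E_{r+1}^{(i)}})=H(X_{E_{r+1}})$, I obtain
\[
H(X_{S_{r+1}}) + (d-1)\,H(X_{S_r}) \leq d\,H(X_{E_{r+1}}). \qquad (\mathrm{I})
\]
A single application of submodularity to $E_{r+1} = S_r(o) \cup S_r(1)$ with intersection $S_r(o)\cap S_r(1) = E_r$ gives
\[
H(X_{E_{r+1}}) + H(X_{E_r}) \leq 2\,H(X_{S_r}). \qquad (\mathrm{II})
\]
Combining $(\mathrm{I})$ with $(d/2)\cdot(\mathrm{II})$ makes the $H(X_{E_{r+1}})$ terms cancel and produces exactly $\Sigma_{r+1}(X) \leq \Sigma_r(X)$.

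\textbf{Step 2 (equality $\Leftrightarrow$ $r$-Markov).} The $\Leftarrow$ direction is routine: if $X$ is $r$-Markov, applying the factorization at each edge $\{o,i\}$ yields mutual conditional independence of $X_{L_{r+1}^{(1)}},\ldots,X_{L_{r+1}^{(d)}}$ given $X_{S_r}$ (equality in each of the $d-1$ submodular steps defining $(\mathrm{I})$) and conditional independence of the two depth-$r$ outer boundaries of the edge $\{o,1\}$ given $X_{E_r}$ (equality in $(\mathrm{II})$). For the converse, set $U := S_r(o)\setminus S_r(1)$ (the depth-$r$ vertices outside $T^{(1)}$) and $V := S_r(1)\setminus S_r(o) = L_{r+1}^{(1)}$. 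Equality in $(\mathrm{II})$ reads $X_V \perp X_U \mid X_{E_r}$. The key algebraic identity is
\[
S_r \;=\; E_r \cup U,
\]
verified by inspecting the depth-$r$ layer: $E_r$ already contains $S_{r-1}$ together with the depth-$r$ vertices inside $T^{(1)}$, and $U$ supplies the remaining depth-$r$ vertices outside $T^{(1)}$. Thus conditioning on $X_{S_r}$ is equivalent to conditioning on $(X_{E_r}, X_U)$. Equality in $(\mathrm{I})$ gives $X_V \perp \bigl(X_{L_{r+1}^{(2)}},\ldots,X_{L_{r+1}^{(d)}}\bigr) \mid X_{S_r}$, and fusing this with equality in $(\mathrm{II})$ yields
\[
X_{L_{r+1}^{(1)}} \;\perp\; \bigl(X_U,\; X_{L_{r+1}^{(2)}},\ldots,X_{L_{r+1}^{(d)}}\bigr) \;\bigm|\; X_{E_r},
\]
which is exactly the $r$-Markov factorization at the edge $\{o,1\}$ within $S_{r+1}$. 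Invariance of $X$ then transports this to every edge whose $(r-1)$-neighborhood fits inside $S_{r+1}$.

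\textbf{Main obstacle.} The inequality itself is a short submodularity computation; the substantive step is the converse in Step 2. Equality in $(\mathrm{II})$ controls conditional independence only given the smaller set $X_{E_r}$, while equality in $(\mathrm{I})$ controls it given the larger set $X_{S_r}$. The identity $S_r = E_r \cup U$ is precisely what reconciles these two conditionings, and checking it carefully (in particular that the depth-$r$ vertices lying inside $T^{(1)}$ already belong to $E_r$, so that no extra variables leak into the conditioning) is the one non-routine point in the argument.
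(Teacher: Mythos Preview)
Your proof is correct and takes a genuinely different route from the paper's. The paper first reduces to $r=1$ by the block-coding map $\Psi$ that relabels each vertex with its $(r-1)$-neighborhood configuration (so that an $r$-Markov process becomes an edge-Markov one), and then for $r=1$ carries out a conditional-entropy computation: it expands $H(S_2)-H(S_1)-\tfrac d2(H(E_2)-H(E_1))$ via the chain rule, bounds $H(\cup_{i\ge2}L_i\mid E_2)$ by $(d-1)H(L_1\mid S_1)$, and finishes with $H(L_1\mid S_1)\le H(L\mid E_1)$.

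Your two submodular inequalities are in fact the same two steps in disguise---(I) is exactly $H(\cup_{i\ge2}L_i\mid E_2)\le (d-1)H(L_1\mid S_1)$ rewritten, and (II) is $H(L_1\mid S_1)\le H(L\mid E_1)$---but you package them as set-level submodularity and apply them directly at radius $r$, bypassing the block-coding reduction entirely. This buys a cleaner, coordinate-free argument and makes the geometric identity $S_r=E_r\cup U$ the visible hinge between the two equality conditions. The paper's reduction, on the other hand, isolates a reusable principle (any statement about $r$-Markov processes is an edge-Markov statement after recoding) that it exploits again in Section~\ref{sec:extended}. Your identification of the equality case via the chain rule for conditional independence, $A\perp B\mid C$ and $A\perp D\mid (B,C)\Rightarrow A\perp(B,D)\mid C$, is more explicit than the paper's ``it is then easy to check'' and is a nice touch.
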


 Notice that we also get that $\Sigma(X)=\Sigma_r(X)$ holds for $r$-Markov processes.

\subsection{Vertex-Markov processes}

There is a subclass of edge-Markov processes for which the annealed entropy takes a particularly simple form.

\begin{definition}[Vertex-Markov process] \label{def:markovV} Let $T$ be a tree, a probability measure in $M^{T}$ is vertex-Markov if conditioned on the value at a vertex, the processes on the pending subtrees of that vertex are independent. 
\end{definition}

Let $\cI_{e}(M)$ denote the set of probability measures on $M^{E_1}$  that are invariant by switching the two sides of the edge $\{o,1\}$. If $\mu \in \cI_d(\mu)$ then its restriction to $E_1$  is in $\cI_{e}(M)$. Conversely, if $p \in \cI_{e}(M)$, there exists a unique vertex-Markov process whose restriction to $E_1$ is $p$. We denote the law of this process by $\mu(p)$. If $X \in \mathcal I_d(M)$, we define 
$$
\Sigma_e (X) = \frac d 2 H(X_{E_1}) - (d - 1) H (X_o).
$$
If $p \in \cI_{e}(M)$, we set $\Sigma(p) = \Sigma_e ( \mu(p))$.  Vertex-Markov processes satisfy the following extremal property.
\begin{lemma}\label{le:staredgetoedgevertex}
If $X \in \cI_d(M)$ then 
$$
\Sigma_1(X)  \leq \Sigma_e(X), 
$$
with equality if and only if $X_{S_1}$ is a vertex-Markov process on $S_1$. 
\end{lemma}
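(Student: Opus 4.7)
The plan is to reduce the inequality to the subadditivity of conditional entropy after applying the chain rule and invariance.

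First I would expand the definitions and rearrange. Writing $X_o$ for the value at the root and $X_1, \ldots, X_d$ for the values at its $d$ neighbors (so $S_1 = \{o, 1, \ldots, d\}$ and $E_1 = \{o, 1\}$), the claim $\Sigma_1(X) \leq \Sigma_e(X)$ becomes
$$
H(X_{S_1}) + (d-1) H(X_o) \leq d \, H(X_{E_1}).
$$
Applying the chain rule on both sides (with $H(X_{S_1}) = H(X_o) + H(X_1, \ldots, X_d \mid X_o)$ and $H(X_{E_1}) = H(X_o) + H(X_1 \mid X_o)$), and using that by invariance of $\mu$ under automorphisms of $T_d$ we have $H(X_i \mid X_o) = H(X_1 \mid X_o)$ for every $i = 1, \ldots, d$, the inequality simplifies to
$$
H(X_1, \ldots, X_d \mid X_o) \leq \sum_{i=1}^{d} H(X_i \mid X_o).
$$

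The second step is to invoke the standard subadditivity of conditional entropy: for any random variables $Y_1, \ldots, Y_d$ conditioned on $Z$, the joint conditional entropy is at most the sum of the individual ones, with equality if and only if $Y_1, \ldots, Y_d$ are conditionally independent given $Z$. Applied to our setting with $Y_i = X_i$ and $Z = X_o$, this yields the desired inequality, and equality holds precisely when $X_1, \ldots, X_d$ are conditionally independent given $X_o$. This last condition is exactly the statement that $X_{S_1}$ is a vertex-Markov process on $S_1$ in the sense of Definition \ref{def:markovV} (the pending subtrees of $o$ within $S_1$ are just the single neighbors $\{i\}$, and being conditionally independent given $X_o$ is the defining property).

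There is no serious obstacle here: the lemma is essentially a one-line consequence of subadditivity of conditional entropy, once the symmetry of the marginals under the invariance of $\mu$ is used to recognize $d \, H(X_1 \mid X_o)$ as $\sum_i H(X_i \mid X_o)$. The only care needed is in the bookkeeping to convert the definitions of $\Sigma_1$ and $\Sigma_e$ (which mix full and conditional entropies with the factors $d/2$ and $d-1$) into a clean chain-rule form, and in verifying that the equality case — conditional independence of the neighbors given the root — is literally the vertex-Markov condition restricted to the one-ball.
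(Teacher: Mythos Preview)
Your proof is correct and is essentially identical to the paper's own argument: both apply the chain rule $H(X_{S_1}) = H(X_o) + H(X_1,\ldots,X_d \mid X_o)$, invoke invariance to write $d\,H(X_1 \mid X_o) = \sum_i H(X_i \mid X_o)$, and conclude via subadditivity of conditional entropy, with the equality case read off as conditional independence of the neighbors given the root. The only cosmetic difference is the order of the algebra (you first rearrange to the target inequality and then apply the chain rule, whereas the paper applies the chain rule and subadditivity to $H(X_{S_1})$ directly and then rearranges).
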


Combined with Theorem \ref{th:edgeMarkov}, the above lemma implies the following corollary. 

\begin{theorem}\label{th:vertexMarkov}
Let $p \in\cI_{e}(M)$. If for all couplings $q \in\cI_{e}(M^2)$ of $p$ and $p$  we have $\Sigma(q) \leq 2 \Sigma(p)$,  then $\underline h(\mu(p)) = \bar h(\mu(p))= \Sigma(p)$ and $\mu(p)$ is strongly typical.
\end{theorem}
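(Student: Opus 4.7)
The plan is to deduce Theorem \ref{th:vertexMarkov} from Theorem \ref{th:edgeMarkov} applied at level $r=1$, with Lemma \ref{le:staredgetoedgevertex} as the bridge that transports the vertex-level coupling hypothesis to an edge-level one. I set $p^* := \mu(p)|_{S_1} \in \cI_{d,1}(M)$. A preliminary observation I would record is that every vertex-Markov process is automatically edge-Markov: conditionally on the value at a vertex $v$ its $d$ pending subtrees are independent, and further conditioning on the value at a neighbor $u$ of $v$ (which is measurable with respect to the pending subtree through $u$) preserves the independence of that subtree from the others. By uniqueness of the edge-Markov extension this identifies $\mu(p^*) = \mu(p)$. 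Moreover, because $\mu(p)|_{S_1}$ is vertex-Markov on $S_1$, the equality case of Lemma \ref{le:staredgetoedgevertex} gives $\Sigma(p^*) = \Sigma_1(\mu(p)) = \Sigma_e(\mu(p)) = \Sigma(p)$.

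The core of the proof is then to verify, for $p^*$, the coupling hypothesis required by Theorem \ref{th:edgeMarkov} at $r=1$. Given any coupling $q' \in \cI_{d,1}(M^2)$ of $p^*$ with itself, I would set $q := q'|_{E_1}$. By definition of $\cI_{d,1}$, $q$ lies in $\cI_e(M^2)$, and its two marginals on $M^{E_1}$ are both $p^*|_{E_1} = p$, so $q$ is a coupling of $p$ and $p$. Applying Lemma \ref{le:staredgetoedgevertex} in the alphabet $M^2$ to the edge-Markov process $\mu(q') \in \cI_d(M^2)$ then yields
\begin{equation*}
\Sigma(q') = \Sigma_1(\mu(q')) \leq \Sigma_e(\mu(q')) = \Sigma(q) \leq 2\Sigma(p) = 2\Sigma(p^*),
\end{equation*}
where the middle equality uses $\mu(q')|_{E_1} = q'|_{E_1} = q$ so that $\Sigma_e(\mu(q'))$ reproduces the definition of $\Sigma(q)$, and the penultimate inequality is the standing hypothesis of Theorem \ref{th:vertexMarkov}.

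Once this inequality is in hand, Theorem \ref{th:edgeMarkov} applied to $p^*$ delivers $\underline h(\mu(p^*)) = \bar h(\mu(p^*)) = \Sigma(p^*)$ together with strong typicality of $\mu(p^*)$; substituting $\mu(p^*) = \mu(p)$ and $\Sigma(p^*) = \Sigma(p)$ gives exactly the conclusion of Theorem \ref{th:vertexMarkov}. I expect the only subtlety to lie in cleanly justifying the two identifications at the boundary of the two frameworks: the implication vertex-Markov $\Rightarrow$ edge-Markov, and the fact that $\mu(q')|_{E_1}$ genuinely equals $q$ as an element of $\cI_e(M^2)$. Everything else is a direct composition of Lemma \ref{le:staredgetoedgevertex} with Theorem \ref{th:edgeMarkov}; notably, the vertex-Markov hypothesis is strong enough because equality in Lemma \ref{le:staredgetoedgevertex} removes any slack on the $p$-side that would otherwise need to be absorbed by the coupling bound.
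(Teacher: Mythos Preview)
Your proof is correct and follows essentially the same route as the paper: restrict $\mu(p)$ to $S_1$ to get $p^*\in\cI_{d,1}(M)$, use Lemma~\ref{le:staredgetoedgevertex} to transport the edge-level coupling bound $\Sigma(q)\le 2\Sigma(p)$ into the star-level bound $\Sigma(q')\le 2\Sigma(p^*)$, and then invoke Theorem~\ref{th:edgeMarkov} at $r=1$. You are actually more careful than the paper in spelling out why $\mu(p^*)=\mu(p)$ (via vertex-Markov $\Rightarrow$ edge-Markov and uniqueness) and why $\Sigma(p^*)=\Sigma(p)$ (via the equality case of Lemma~\ref{le:staredgetoedgevertex}); the paper leaves these identifications implicit.
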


\begin{proof}
Let $p' = \mu(p)_1 \in \cI_{d,1}(M)$ be the law of $\mu(p)$ restricted to $S_1$. Let $q'$ be an invariant coupling of $p'$ and $p'$ and let $q$ be its restriction to $E_1$. By construction $q \in\cI_{e}(M^2)$. Moreover, by Lemma \ref{le:staredgetoedgevertex}, $\Sigma(q') \leq \Sigma_1 (\mu(q')) = \Sigma_e (\mu(q)) = \Sigma(q)$. It follows that Theorem \ref{th:vertexMarkov} is a consequence of Theorem \ref{th:edgeMarkov} applied to $r=1$ and $p' = \mu(p)_1$.
\end{proof}

\subsection{Application to factor graphs and combinatorial optimization}\label{subsec:factor}
In this paragraph, we discuss a basic connection between asymptotic free energy of factor graphs and sofic entropy. This may serve an extra motivation for studying the sofic entropy.

Let $M$ be a finite set, $r \geq 1$ be an integer and let $\varphi$ be a function on the set of rooted unlabeled $M$-colored graphs of radius $r$ taking value in $(0,\infty)$. If $G \in \mathcal G_n(d)$,
$$
Z_{G} = \sum_{f \in M^n} \prod_{v =1} ^ n \varphi((G,f,v)_r ),
$$
where $(G,f,v)_r$ is the rooted colored graph associated to the ball of radius $r$ around $v$ in $G$.  We set 
$$
\psi = \ln \varphi.
$$

The asymptotic free energy is defined as the limit of 
$(1 / n) \ln Z_{G_n}$ where $G_n$ is a uniformly sampled graph in $\cG_n(d)$ (provided that the limit exists). By standard concentration inequality (see argument in Theorem \ref{th:concmain}), it is easy to check that if $G_n$ is uniformly sampled in $\cG_n(d)$, then, in probability, as $n$ goes to infinity,
\begin{equation}\label{eq:concZGn}
\frac 1 n \ln Z_{G_n} - \dE \frac 1 n \ln Z_{G_n} \to 0. 
\end{equation}

It is straightforward to express the limits of the expected free energy in terms of the entropy. If $p \in\cI_{d,r}(M)$, we set for ease of notation $\underline h(p) = \underline h (\mu(p),r)$ and similarly for $\bar h (p)$ (there are the upper and lower growth rates of the number of colorings of $G_n$ whose $r$-neighborhood is close to $p$). In the statement below, we use the notation $\langle p , \psi \rangle  =\dE \psi ( X)$ where $X$ has law $p$.

\begin{lemma}\label{le:factorgraph}
For integer $r \geq 1$ and $\psi$ as above, if $G_n$ is uniformly distributed on $\cG_n(d)$ (with $nd$ even and $n \geq d+1$),  we have 
$$
\sup_{ p \in  \cI_{d,r}(M)} \PAR{  \underline h(p) + \langle  p , \psi \rangle } \leq  \liminf_{n \to \infty} \dE \frac 1 n \ln Z_{G_n} \leq \limsup_{n\to \infty} \dE \frac 1 n \ln Z_{G_n} \leq \sup_{ p \in  \cI_{d,r}(M)} \PAR{  \bar h(p) + \langle  p , \psi \rangle }.
$$
\end{lemma}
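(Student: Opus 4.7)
The plan is to rewrite the partition function in terms of the empirical Benjamini-Schramm distribution and then sandwich it between finite maxima via a covering argument. Using $\sum_{v=1}^n \psi((G,f,v)_r) = n\langle \distr_G(f)_r, \psi\rangle$, one has
\[
Z_G = \sum_{f \in M^n} \exp\!\bigl(n \langle \distr_G(f)_r, \psi \rangle\bigr).
\]
Since there are only finitely many unlabeled rooted $M$-colored $d$-regular graphs of radius $r$, $\psi$ is bounded and the functional $\nu \mapsto \langle \nu, \psi \rangle$ is continuous on the compact set $\cI_{d,r}(M)$. Moreover, a standard locally-tree-like argument for random regular graphs, combined with the fact that the BFS randomization automatically makes $\distr_{G_n}(f)_r$ both $\mathrm{Aut}(S_r)$-invariant and edge-switching invariant on tree-like vertices, shows that with probability $1-o(1)$ the distribution $\distr_{G_n}(f)_r$ lies within BS-distance $o(1)$ of $\cI_{d,r}(M)$ uniformly in $f \in M^n$.

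For the upper bound, fix $\eta>0$ and cover $\cI_{d,r}(M)$ by finitely many BS-balls $B(p_i, \veps)$, $i=1,\dots,N$, with $p_i \in \cI_{d,r}(M)$ and $\veps$ small enough that $|\langle \nu, \psi \rangle - \langle p_i, \psi \rangle| \leq \eta$ whenever $\dd(\nu,p_i) \leq 2\veps$. On the high-probability event above, every $f$ lies in some $\cF_{G_n}(\mu(p_{i(f)}), r, 2\veps)$, so
\[
Z_{G_n} \leq e^{n\eta} \sum_{i=1}^N |\cF_{G_n}(\mu(p_i), r, 2\veps)| \, e^{n \langle p_i, \psi \rangle},
\]
which gives $\tfrac{1}{n}\ln Z_{G_n} \leq \eta + \tfrac{\log N}{n} + \max_i [\langle p_i, \psi \rangle + H_{G_n}(\mu(p_i), r, 2\veps)]$. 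Choosing $\alpha \in (0, 1/(2N))$, a union bound from the definition of $h_n$ shows that with probability $>1/2$ all $H_{G_n}(\mu(p_i), r, 2\veps) \leq h_n(\mu(p_i), r, 2\veps, \alpha)$ simultaneously. Passing to the median, then $\limsup_n$ followed by $\veps \to 0$ (using Lemma \ref{le:hconst} to identify the iterated limit with $\bar h(p_i)$), and finally $\eta \to 0$, yields
\[
\limsup_n \mathrm{med}\!\left(\tfrac{1}{n} \ln Z_{G_n}\right) \leq \sup_{p \in \cI_{d,r}(M)}\bigl[\langle p, \psi \rangle + \bar h(p)\bigr].
\]

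For the lower bound, fix $p \in \cI_{d,r}(M)$, $\eta > 0$, and $\veps>0$ small. By the definition of $\underline h(p)=\lim_{\veps \to 0}\liminf_n h_n(\mu(p), r, \veps, 1/2)$ and monotonicity in $\veps$, for $n$ large one has $h_n(\mu(p), r, \veps, 1/2) \geq \underline h(p) - \eta$; thus with probability $\geq 1/2$, $|\cF_{G_n}(\mu(p), r, \veps)| \geq e^{n(\underline h(p) - 2\eta)}$, and every $f$ in that set satisfies $\langle \distr_{G_n}(f)_r, \psi \rangle \geq \langle p, \psi \rangle - \eta$ by continuity of $\langle \cdot, \psi\rangle$. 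Restricting the sum in $Z_{G_n}$ to $\cF_{G_n}(\mu(p),r,\veps)$ yields $\tfrac{1}{n} \ln Z_{G_n} \geq \langle p, \psi \rangle + \underline h(p) - 3\eta$ with probability $\geq 1/2$, hence the same bound on the median. Supping over $p$ and letting $\eta \to 0$ gives the matching lower bound. Finally, concentration \eqref{eq:concZGn} forces $\mathrm{med}(\tfrac{1}{n}\ln Z_{G_n}) - \dE \tfrac{1}{n}\ln Z_{G_n} \to 0$, transferring both bounds to the expectation.

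The main delicate point is the uniform (in $f$) closeness of $\distr_{G_n}(f)_r$ to $\cI_{d,r}(M)$ with probability $1-o(1)$, so that the finite $\veps$-cover of $\cI_{d,r}(M)$ genuinely catches all relevant empirical measures; this relies on controlling the number of vertices with non-tree-like $r$-neighborhood in $G_n$. Otherwise the argument is routine: the $\alpha$-parameter in $h_n$ is absorbed by Lemma \ref{le:hconst}, and uniform continuity of $\nu \mapsto \langle \nu, \psi\rangle$ on the compact set $\cI_{d,r}(M)$ provides the $\eta$-modulus of continuity used in both directions.
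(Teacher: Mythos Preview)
Your argument follows essentially the same route as the paper: rewrite $Z_G=\sum_f e^{n\langle \distr_G(f)_r,\psi\rangle}$, cover $\cI_{d,r}(M)$ by a finite $\veps$-net, use that $\distr_{G_n}(f)_r$ is uniformly close to $\cI_{d,r}(M)$ with high probability (the paper phrases this as $\dP(N_{G_n}=0)\to 1$), bound $Z_{G_n}$ above and below by sums over $|\cF_{G_n}(\mu(p_i),r,\cdot)|$, and use the concentration \eqref{eq:concZGn} to pass between medians and expectations. The lower bound is handled identically in both.

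One point where your write-up is more explicit than the paper but also slightly delicate: in the upper bound you take $\alpha=1/(2N)$ so that the union bound works, and then invoke Lemma~\ref{le:hconst} to identify $\lim_{\veps\to 0}\limsup_n h_n(\mu(p_i),r,2\veps,\alpha)$ with $\bar h(p_i)$. But both $\alpha$ and the net points $p_i$ vary with $\veps$, so Lemma~\ref{le:hconst} (which fixes $\alpha\in(0,1)$ before sending $\veps\to 0$) does not literally apply to this iterated limit. The paper glosses over exactly the same transition (it jumps directly from $\max_p[H_{G_n}(\mu(p),r,\veps)+\langle p,\psi\rangle]$ to $\max_p[\bar h(p)+\langle p,\psi\rangle]$), so this is not a divergence from the paper's approach; but if you want the step to be airtight you should either appeal to the stronger large-deviation statement in Theorem~\ref{th:concmain} (the $-\delta(\veps)$ formulation, which absorbs polynomially small $\alpha$) or decouple the net parameter from the tolerance parameter in $\cF_{G_n}$.
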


In particular, if Conjecture \ref{conj:entropy} holds true, then it would automatically imply the convergence of the expected free energy for all functions $\psi$. Note also that Theorem \ref{th:2ndmoment} can be used to obtain a lower bound expected free energy while Theorem \ref{th:1stmoment} can be used to get an upper bound. With proper technical conditions, it is possible to extend Lemma \ref{le:factorgraph} to some hard-constrained models, that is to some functions $\varphi = e^{\psi}$ which take value in $[0,\infty)$. For simplicity, we will however not discuss in details this possibility here.

In the same vein, in combinatorial optimization problems, we are often interested in the computation of a graph functional of the form:
$$
L_G = \max_{ f \in M^n} \sum_{v =1}^n \psi((G,f,v)_r ),
$$
with $r \geq 1$ and $\psi$ as above. Again, it is easy to check that if $G_n$ is uniformly sampled in $\cG_n(d)$, we have, in probability,
\begin{equation*}\label{eq:concLGn}
\frac {L_{G_n}}{ n} - \frac {\dE L_{G_n}}{ n}\to 0. 
\end{equation*}
The following statement is a corollary of Lemma \ref{le:factorgraph}. It shows that typical processes are intimately connected to the computation of limits of $\dE L_{G_n} / n$.
\begin{lemma}\label{le:copti}
For integer $r \geq 1$ and $\psi$ as above, if $G_n$ is uniformly distributed on $\cG_n(d)$ (with $nd$ even and $n \geq d+1$),  we have 
$$
\sup_{ p \in  \cI_{d,r}(M) : \underline h(p)  \geq 0}   \langle  p , \psi \rangle \leq  \liminf_{n\to \infty}\frac {\dE L_{G_n}}{ n} \leq \limsup_{n\to \infty} \frac {\dE L_{G_n}}{ n} \leq \sup_{ p \in  \cI_{d,r}(M) : \bar h(p) \geq 0}   \langle  p , \psi \rangle .
$$
\end{lemma}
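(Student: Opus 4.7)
The plan is to deduce Lemma~\ref{le:copti} from Lemma~\ref{le:factorgraph} by a zero-temperature limit: for each inverse temperature $\beta>0$ I apply Lemma~\ref{le:factorgraph} with $\psi$ replaced by $\beta\psi$. Setting $Z^{(\beta)}_G = \sum_{f\in M^n} \exp\bigl(\beta\sum_{v=1}^n \psi((G,f,v)_r)\bigr)$, the partition function satisfies the elementary sandwich $e^{\beta L_G} \leq Z^{(\beta)}_G \leq |M|^n e^{\beta L_G}$ (the max term is $\leq$ the sum, which is $\leq |M|^n$ times the max term), which after taking logarithms and dividing by $\beta n$ reads
$$
\frac{L_G}{n} \;\leq\; \frac{1}{\beta n}\ln Z^{(\beta)}_G \;\leq\; \frac{L_G}{n} + \frac{\ln|M|}{\beta}.
$$
Plugging in $G=G_n$ and taking expectation transports this comparison to the averaged free-energy-per-site appearing in Lemma~\ref{le:factorgraph}.

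For the lower bound, fix any $p_0 \in \cI_{d,r}(M)$ with $\underline h(p_0)\geq 0$. The lower estimate from Lemma~\ref{le:factorgraph} applied to $\beta\psi$, combined with the sandwich, gives
$$
\liminf_{n\to\infty}\frac{\dE L_{G_n}}{n} \;\geq\; \frac{\underline h(p_0)}{\beta} + \langle p_0,\psi\rangle - \frac{\ln|M|}{\beta} \;\geq\; \langle p_0,\psi\rangle - \frac{\ln|M|}{\beta},
$$
where the second inequality uses $\underline h(p_0)\geq 0$. Letting $\beta\to\infty$ and then taking the supremum over admissible $p_0$ yields the desired lower bound. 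The upper bound proceeds in parallel: from Lemma~\ref{le:factorgraph} and the sandwich,
$$
\limsup_{n\to\infty}\frac{\dE L_{G_n}}{n} \;\leq\; \sup_{p\in\cI_{d,r}(M)}\Bigl[\frac{\bar h(p)}{\beta} + \langle p,\psi\rangle\Bigr].
$$

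The main (mild) subtlety is controlling this last supremum uniformly as $\beta\to\infty$. Here the key fact is the a priori bound $\bar h(p)\in\{-\infty\}\cup[0,\ln|M|]$, the upper part of which follows from the trivial estimate $|\mathcal F_G(\mu,r,\veps)|\leq |M|^n$. Values $p$ with $\bar h(p)=-\infty$ contribute $-\infty$ and are harmless, while for the remaining $p$ one has $\bar h(p)/\beta\leq \ln|M|/\beta$. Therefore
$$
\limsup_{n\to\infty}\frac{\dE L_{G_n}}{n} \;\leq\; \frac{\ln|M|}{\beta} + \sup_{\,p:\,\bar h(p)\geq 0}\langle p,\psi\rangle,
$$
and letting $\beta\to\infty$ closes the argument. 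Note that $\{p:\bar h(p)\geq 0\}$ is nonempty since constant-color processes lie inside it (for them, $\bar h=\underline h=0$), so the right-hand supremum is a well-defined real number and $\psi$ being bounded on the finite set of radius-$r$ $M$-colored rooted graphs in $\cGb_M$ ensures no infinity issues.
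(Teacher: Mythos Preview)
Your proof is correct and follows essentially the same approach as the paper: both apply Lemma~\ref{le:factorgraph} to $\beta\psi$, use the sandwich $|M|^{-n}Z_G(\beta)\leq e^{\beta L_G}\leq Z_G(\beta)$ (equivalent to yours), invoke the a priori bound $\bar h(p),\underline h(p)\in\{-\infty\}\cup[0,\ln|M|]$ to replace the unrestricted supremum by one over $\{p:\bar h(p)\geq 0\}$ (resp.\ $\underline h(p)\geq 0$), and then send $\beta\to\infty$. Your added remark that the constraint set is nonempty (via constant-color processes) is a small clarification the paper omits, but otherwise the arguments coincide.
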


Again, we observe that Theorem \ref{th:2ndmoment} can be used to obtain a lower bound on $\dE L_{G_n} / n$ and Theorem \ref{th:1stmoment} an upper bound.

\begin{remark}We conclude this paragraph by mentioning that, for $r = 1$, there is a simplification of Lemma \ref{le:factorgraph} and Lemma \ref{le:copti} for functions $\psi$ of the form $$\psi((G,f,v)_1) = \psi_0 ( f(v)) + \sum_{ u : u \sim v} \psi_1 ( f(v),f(u))$$
where the supremum in Lemma \ref{le:factorgraph} and Lemma \ref{le:copti} is taken over $p \in \cI_{e}(M)$ instead of $p \in  \cI_{d,1}(M)$ and the entropic term is given by $\underline h(p) = \sup \underline h (q)$ where the supremum is over all $q  \in \cI_{d,1}(M) $ whose restriction to  $E_1$ is $ p$, and similarly for $\bar h(p)$. This can be useful because it reduces the dimension of the underlying optimization problem. In that case, Theorem  \ref{th:vertexMarkov} can be used to give  lower bounds. \end{remark}

\subsection*{Organization of the paper} The remainder of this text is organized as follows. In Section \ref{sec:prop}, we will establish the key properties of sofic and annealed entropies. In Section \ref{sec:proofsMain}, we will prove the main results of this paper. In the final Section \ref{sec:extended}, we will extend our framework and main results to invariant processes on unimodular Galton-Watson trees. 

\section{Properties of sofic and annealed entropies} \label{sec:prop}

\subsection{Concentration of entropy: proof of Lemma \ref{le:hconst}}

Let  $G_n$ be a uniformly distributed random graph on $\cG_n(d)$ with $n \geq d-1$ and $nd $ even. Recall the definition of $H_{G_n} (\mu, r,\veps)$ in \eqref{eq:defHG}. The aim of this subsection is to establish the following concentration result.

\begin{theorem}\label{th:concmain}
Let $r \geq 0$, $\mu \in \cI_d(M)$, $h \in \{-\infty\} \cup [0,\infty)$. For all functions $\delta : (0,\infty) \to (0,\infty]$  with $\lim_{\veps \to 0} \delta(\veps) = 0$, we have the following:  if for all $\veps >0$, 
\begin{equation}\label{eq:conc1}
\limsup_{n \to \infty}  \frac 1 n \log \dP ( H_{G_n} (\mu, r,\veps) \geq h) \geq - \delta(\veps),
\end{equation}
then $\overline h(\mu,r,\alpha) \geq h$ for all $0 < \alpha < 1$. Conversely, for all functions $\delta$ as above such that  $\lim_{\veps \to 0} \veps^{-2} \delta(\veps) = 0$: if for all $\veps >0$,
\begin{equation}\label{eq:conc2}
\limsup_{n \to \infty}  \frac 1 n \log \dP ( H_{G_n} (\mu, , r , \veps) \leq h) \geq - \delta(\veps)
\end{equation}
then $\bar h(\mu,r,\alpha) \leq h$  for all $0 < \alpha < 1$. Finally, the same claims hold with $\liminf$ and $\underline h$ replacing $\limsup$ and $\bar h$.
\end{theorem}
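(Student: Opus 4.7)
The plan is to combine two standard ingredients. The first is a switching Lipschitz estimate: if $G, G' \in \cG_n(d)$ differ by a single edge-switch, then all but at most $4(d^r+1)$ of the radius-$r$ rooted neighborhoods in $G$ and $G'$ agree, so for every $f \in M^n$, $\dd(\distr_G(f)_r, \distr_{G'}(f)_r) \leq C_r/n$ for some constant $C_r = C_r(d,r)$. Iterating over a path of $t$ switches gives the inclusion $\mathcal F_G(\mu,r,\veps) \subseteq \mathcal F_{G'}(\mu,r,\veps + tC_r/n)$. The second is a concentration-of-measure inequality for the uniform measure on $\cG_n(d)$: realizing $G_n$ as the configuration-model random matching on $nd$ half-edges conditioned on simplicity (an event of probability bounded away from $0$), Azuma's inequality for the Doob martingale of the matching gives, for every $A \subseteq \cG_n(d)$ and $t \geq 0$, an isoperimetric bound of the form
\[
\dP(A)\,\dP((A^t)^c) \leq \exp\PAR{-t^2/(Cnd)},
\]
where $A^t$ denotes the set of graphs within $t$ switches of $A$ and $C$ is an absolute constant; this is also the ingredient behind the concentration \eqref{eq:concZGn}.

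For the first direction, assume \eqref{eq:conc1} and fix $\veps_0 > 0$ and $\alpha \in (0,1)$. Since $\delta(\veps) \to 0$, we may choose $\veps < \veps_0$ so small that $C_r\sqrt{Cd\delta(\veps)} < \veps_0 - \veps$. Let $A_n = \{H_{G_n}(\mu,r,\veps) \geq h\}$; along a subsequence $n_k \to \infty$ realizing the $\limsup$ in \eqref{eq:conc1}, $\dP(A_{n_k}) \geq e^{-(\delta(\veps)+o(1))n_k}$. Set $t_k = s\,n_k\sqrt{Cd\delta(\veps)}$ for a constant $s \in (1, (\veps_0-\veps)/(C_r\sqrt{Cd\delta(\veps)}))$. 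Then concentration yields $\dP((A_{n_k}^{t_k})^c) \leq e^{-n_k\delta(\veps)(s^2-1)+o(n_k)} \to 0$, while the Lipschitz property together with $s\,C_r\sqrt{Cd\delta(\veps)} < \veps_0 - \veps$ gives $A_{n_k}^{t_k} \subseteq \{H_{G_{n_k}}(\mu,r,\veps_0) \geq h\}$. Hence $\dP(H_{G_{n_k}}(\mu,r,\veps_0) \geq h) \to 1 > \alpha$, so $h_{n_k}(\mu,r,\veps_0,\alpha) \geq h$ for $k$ large, whence $\limsup_n h_n(\mu,r,\veps_0,\alpha) \geq h$. Since $\veps_0$ was arbitrary, $\bar h(\mu,r,\alpha) \geq h$. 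The $\liminf$ version is obtained by running the same argument on the full sequence rather than along $(n_k)$.

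For the second direction, assume \eqref{eq:conc2} together with the strengthened hypothesis $\delta(\veps) = o(\veps^2)$. Fix $\veps$ small, set $\veps' = \veps/2$, and observe that the strengthened hypothesis ensures $C_r\sqrt{Cd\delta(\veps)} < \veps - \veps'$ for $\veps$ small. Let $B_n = \{H_{G_n}(\mu,r,\veps) \leq h\}$; along a subsequence, $\dP(B_{n_k}) \geq e^{-(\delta(\veps)+o(1))n_k}$. With $t_k = s\,n_k\sqrt{Cd\delta(\veps)}$ for a constant $s \in (1,(\veps-\veps')/(C_r\sqrt{Cd\delta(\veps)}))$, the concentration inequality forces $\dP((B_{n_k}^{t_k})^c) \to 0$; and the Lipschitz step run in reverse, $\mathcal F_G(\mu,r,\veps - tC_r/n) \subseteq \mathcal F_{G_0}(\mu,r,\veps)$ for any $G_0 \in B_n$ within $t$ switches of $G$, combined with the monotonicity of $H_G$ in $\veps$, gives $B_{n_k}^{t_k} \subseteq \{H_{G_{n_k}}(\mu,r,\veps') \leq h\}$. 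Therefore $\dP(H_{G_{n_k}}(\mu,r,\veps') \leq h) \to 1$, which forces $\limsup_n h_n(\mu,r,\veps',\alpha) \leq h$; by monotonicity of $h_n$ in $\veps$ and letting $\veps \to 0$, $\bar h(\mu,r,\alpha) \leq h$. The $\liminf$ version follows by working on the full sequence.

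The main technical obstacle is the switching concentration inequality itself, with the correct scaling $t^2/(Cnd)$ and the check that conditioning on simplicity does not spoil it (it does not, because the simplicity probability is bounded away from $0$). The asymmetry between \eqref{eq:conc1} and \eqref{eq:conc2} --- the strictly stronger hypothesis $\delta(\veps) = o(\veps^2)$ being required only for the latter --- reflects the fact that in direction 1 we may replace $\veps$ by a strictly larger $\veps_0$, so only the shift $tC_r/n = O(\sqrt{\delta(\veps)})$ needs to be bounded by a prescribed $\veps_0 - \veps$, whereas in direction 2 we must replace $\veps$ by a strictly smaller $\veps'$ and thereby force $tC_r/n < \veps$, a constraint that requires $\sqrt{\delta(\veps)} \ll \veps$.
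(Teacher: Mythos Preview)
Your approach differs from the paper's. Rather than introducing the smoothed partition function $Z_G(\beta) = \sum_f e^{-n\beta\,\dd(\distr_G(f)_r,\mu_r)}$ and proving concentration of $\frac{1}{n}\ln Z_{G_n}(\beta)$ around a deterministic $s_n(\beta)$ (Lemma~\ref{le:concentration}), you work directly with the events $A_n = \{H_{G_n}(\mu,r,\veps) \geq h\}$ and $B_n = \{H_{G_n}(\mu,r,\veps) \leq h\}$, blow them up under switches, and invoke a product-form isoperimetric inequality on $\cG_n(d)$. The raw input---the switching Lipschitz bound for $\distr_G(f)_r$ and Azuma for the configuration model---is the same; your route is more direct and avoids the auxiliary parameter $\beta$, while the paper's $Z_G(\beta)$ has the technical advantage of always being strictly positive (unlike $|\mathcal F_{G}|$, whose logarithm can be $-\infty$), which is precisely where the asymmetry between the two directions enters in the paper's argument.

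Your treatment of the first direction is sound. One cosmetic point: when $\delta(\veps)=0$ your prescription $t_k = s\,n_k\sqrt{Cd\,\delta(\veps)}$ collapses to $0$; replace $\delta(\veps)$ by $\max(\delta(\veps),\gamma_{n_k})$ for some $\gamma_n\to 0$ slowly.

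In the second direction there is a genuine logical gap. You establish that $\dP\bigl(H_{G_{n_k}}(\mu,r,\veps') \leq h\bigr) \to 1$ \emph{along the subsequence $(n_k)$ realising the $\limsup$ in \eqref{eq:conc2}}, and then assert ``$\limsup_n h_n(\mu,r,\veps',\alpha) \leq h$''. This does not follow: control of $h_{n_k}$ along one subsequence bounds $\liminf_n h_n$ from above, not $\limsup_n h_n$. Nothing you have written rules out a second subsequence $(m_j)$, disjoint from $(n_k)$, along which $h_{m_j}(\mu,r,\veps',\alpha)$ stays above $h$; the hypothesis \eqref{eq:conc2} is a $\limsup$ and is silent about $(m_j)$. (Read literally, the paper's own proof of this direction contains the parallel assertion ``$s_n(\beta_\veps)\leq h_3$ for all $n$ large enough'', derived from the same subsequential hypothesis, and appears to share the defect; fortunately the applications in the paper---Lemma~\ref{le:hconst} and the second-moment bound in Theorem~\ref{th:2ndmoment}---use only the first direction.) Your $\liminf$ version of the second direction is fine, since there the hypothesis controls all large $n$.
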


It is immediate to check that Lemma \ref{le:hconst} is a corollary of Theorem \ref{th:concmain}. Beware of the asymmetry between the lower and upper bound. We believe that it is a caveat of our proof. It is ultimately due to the fact that $H_{G_n} (\mu,r,\veps)$ can be equal to $-\infty$.

The proof of Theorem \ref{th:concmain} makes a detour through a relaxation of the entropy.  Fix $\mu \in \cI_d(M)$ and $r \geq 0$. If $G$ is in $\cG_d (n)$, we define for $\beta >0$,  
$$
Z_G(\beta) = \sum_{f \in M^n} e^{-n \beta \dd ( \distr_{G} (f)_{r} , \mu_{r} ) }. 
$$

We start the proof of the proposition with a concentration inequality. 
\begin{lemma}
\label{le:concentration}
Let $G_n$ be uniformly distributed on $\cG_n(d)$ with $d n$ even and $n \geq d-1$. There exists a constant $C$ depending on $(d,r)$ and a deterministic number $s_n(\beta)$ depending on $(n, d, r,\beta)$ such that for any $t > 0$, we have 
$$
\dP \PAR{ \ABS{ \frac 1 n \ln Z_{G_n} (\beta)  - s_n (\beta) }  \geq t } \leq C \exp ( - n t ^2  / (C\beta)^2) .
$$
\end{lemma}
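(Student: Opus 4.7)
The plan is to establish concentration through the configuration (pairing) model via Azuma--Hoeffding applied to a Doob martingale that reveals the pairing one pair at a time, and then transfer the resulting bound from the configuration model to the uniform distribution on $\cG_n(d)$. Concretely, let $\omega$ be a uniformly chosen perfect matching on the $nd$ labelled half-edges and $G(\omega)$ the associated multigraph on $[n]$ (each vertex carrying $d$ half-edges); the quantities $Z_G(\beta)$ and $\distr_G(f)_r$ extend to multigraphs without change. Set
\[
s_n(\beta) = \dE \SBRA{ \tfrac{1}{n} \ln Z_{G(\omega)}(\beta) },
\]
which is deterministic in $(n,d,r,\beta)$.

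The core deterministic input is a bounded-differences estimate. If $G, G'$ are two multigraphs on $[n]$ differing in at most two edges, then for every $f \in M^n$ the rooted ball $(G,f,v)_r$ equals $(G',f,v)_r$ except for the at most $K = K(d,r) = O(d^r)$ vertices $v$ lying within graph distance $r$ of an altered edge. Consequently $\|\distr_G(f)_r - \distr_{G'}(f)_r\|_{\mathrm{TV}} \leq K/n$. Choosing $\dd$ to be dominated by total variation (as is the case for the L\'evy--Prohorov distance suggested in the paper), the reverse triangle inequality gives $|\dd(\distr_G(f)_r,\mu_r) - \dd(\distr_{G'}(f)_r,\mu_r)| \leq K/n$ uniformly in $f$. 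Each summand of $Z_G(\beta)$ thus differs from the corresponding summand of $Z_{G'}(\beta)$ by a multiplicative factor in $[e^{-K\beta}, e^{K\beta}]$, so $|\tfrac{1}{n}\ln Z_G(\beta) - \tfrac{1}{n}\ln Z_{G'}(\beta)| \leq K\beta/n$.

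Now expose the $N = nd/2$ pairs of $\omega$ in canonical order (at step $i$, reveal the partner of the smallest still-unmatched half-edge), let $\mathcal{F}_i$ be the associated filtration, and set $M_i = \dE[\tfrac{1}{n}\ln Z_{G(\omega)}(\beta) \mid \mathcal{F}_i]$. A standard pair-swap coupling---two pairings agreeing on $\mathcal{F}_{i-1}$ but differing at step $i$ can be coupled so that the remaining pairs coincide up to a single swap of two pairs, making the two multigraphs differ in at most two edges---combined with the bounded-differences estimate yields $|M_i - M_{i-1}| \leq K\beta/n$ almost surely. Azuma--Hoeffding then gives
\[
\dP \PAR{ \ABS{ \tfrac{1}{n} \ln Z_{G(\omega)}(\beta) - s_n(\beta) } \geq t } \leq 2 \exp\PAR{- \frac{t^2}{2 N (K\beta/n)^2}} \leq 2 \exp\PAR{- \frac{n t^2}{d K^2 \beta^2}}.
\]
To transfer to $G_n$, use that conditional on $G(\omega)$ being simple, $G(\omega)$ is uniformly distributed on $\cG_n(d)$, and that $\dP(G(\omega)\text{ simple}) \geq p_d > 0$ uniformly in $n$ (with $p_d \to e^{-(d^2-1)/4}$ for fixed $d$). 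This multiplies the tail bound by $1/p_d$, and absorbing that factor together with $d K^2$ into a single constant $C = C(d,r)$ gives the desired inequality.

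The principal technical step is the pair-swap coupling: given two uniform completions of a pairing that diverge at the $i$-th revealed pair, one must construct a joint coupling in which the two completions differ in only one two-pair swap (and hence the resulting multigraphs in at most two edges). This is classical in the random $d$-regular graph literature but is the only place where genuine care is required; the remaining steps (dominating $\dd$ by total variation, invoking Azuma, and passing through the simplicity event) are routine.
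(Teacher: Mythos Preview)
Your argument is correct and follows essentially the same route as the paper: reduce to the configuration model, observe that a single switch of two pairs changes $\distr_G(f)_r$ on at most $O(d^r)$ vertices and hence perturbs $\tfrac{1}{n}\ln Z_G(\beta)$ by $O(\beta d^r/n)$, then apply a martingale/switching concentration inequality and transfer back to simple graphs. The only cosmetic differences are that the paper black-boxes the martingale step by citing Wormald's concentration theorem for random matchings (Theorem~2.19 in \cite{MR1725006}) rather than writing out the Doob-martingale/Azuma argument, and it invokes ``contiguity'' for the transfer where you use the cleaner direct conditioning on $\{G(\omega)\text{ simple}\}$ together with $\dP(\text{simple})\geq p_d>0$.
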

\begin{proof}
The proof follows a standard path. 
By classical contiguity results, it is enough to establish the claim for the configuration model (see Bollob\'as \cite[Section 2.4]{MR1864966}). Recall that the configuration model is the graph (with possible loops and multiple edges) obtained as follows. We attach to each vertex in $[n]$, $d$ half-edges. We sample a matching $m$ on the set $\vec E$ of $nd$ half-edges uniformly at random (recall that a matching is an involution without fixed point). Finally, we form a $d$-regular graph $G = G(m)$ by creating an edge for each pair of matched half-edges. 
 
Let us say that two matchings  $m,m'$ differ by a switch if there exists $(a,b,c,d)$ in $\vec E$ such that $m(e) = m'(e)$ for all $e \in \vec E \backslash \{a,b,c,d\}$ and $m(a) = b$, $m'(a) = c$, $m (c) = d$, $m'(b) = d$. If $m$ and $m'$ differ by a switch, we claim that for any $f \in M^n$, 
$$
\dd (\distr_{G(m)} (f )_{r} , \distr_{G(m')} (f )_{r} ) \leq C_0 \frac{4d(d-1)^{r-1}}{n} = \frac{\theta}{n},
$$
where $C_0$ is the diameter of $\cP(\cGb_M)$ for the distance $\dd$. 
Indeed, we have $\distr_{G,v} (f )_{r}  = \distr_{G',v'} (f')_{r}$ if the rooted subgraphs $(G,f,v)_r$ and $(G',f',v')_r$ are isomorphic. Notably, $\distr_{G(m),v} (f )_{r}  = \distr_{G(m'),v} (f)_{r}$ unless $v$ is at distance at most $r$ from an edge in the symmetric difference of $G(m)$ and $G(m')$.

We deduce that
$$
e^{-\beta \theta}\leq \frac{Z_{G(m')}}{Z_{G(m)}} \leq e^{ \beta \theta}
$$
and
$$
\ABS{ \ln Z_{G(m)} - \ln Z_{G(m')} } \leq  \beta \theta.
$$
From  \cite[Theorem 2.19]{MR1725006}, if $m$ is a  uniformly sampled matching on $\vec E$, we get 
$$
\dP \PAR{ \ABS{ \frac 1 n \ln Z_{G(m)}  - \dE \frac 1 n \ln Z_{G(m)}  }  \geq t } \leq 2 \exp ( - n t ^2  / (2 d \theta^2 \beta^2) ) .
$$
The conclusion follows with $s_n (\beta) = \dE \frac 1 n \ln Z_{G(m)} (\beta) $.
\end{proof}

We are ready for the proof of Theorem \ref{th:concmain}. 

\begin{proof}[Proof of Theorem \ref{th:concmain}]
Recall the definition of $\mathcal F_G(\mu,r,\veps)$ in \eqref{eq:defFG}. Since $\mu$ and $r$ are fixed, we write simply $\mathcal F_G(\veps)$ and set $F_G(\veps) = |\mathcal F_G(\veps)|$. For any $\veps >0$, we have 
\begin{equation}\label{eq:ZGFG1}
\ln Z_{G} (\beta) \geq \ln F_G(\veps) -n \beta \veps, 
\end{equation}
where we have used that $\dd( \distr_{G} (f )_{r} , \mu_{r}  ) \leq \veps$ for all $f \in  \mathcal F_G(\veps)$. The other way around, if $f \notin  \mathcal F_G(\veps)$, we have $\dd( \distr_{G} (f )_{r} , \mu_{r}  )  \geq  \veps$. Hence,
\begin{equation}\label{eq:ZGFG2}
\ln Z_G(\beta)  \leq \ln \PAR{ |M|^n e^{-n \beta \veps} +  F_G(\veps)} \leq \ln 2 +  \PAR{ \ln F_G( \veps) }\vee (n ( \ln |M| - \beta \veps)).
\end{equation}

We may now prove the first claim of the theorem. Assume that \eqref{eq:conc1} holds for some $h \geq 0$ (if $h = -\infty$, there is nothing to prove). Let $h_1 < h$ and $\mathcal E  = \{ H_{G_n}(\mu,r,\veps) \geq h \}$. On the event $\mathcal E$, from \eqref{eq:ZGFG1}, we have for all $\beta >0$, 
$$
\frac 1n \ln Z_{G_n}(\beta) \geq h - \beta \veps. 
$$
There exists $\beta_\veps$ such that, as $\veps \to 0$, $\beta_\veps \veps \to 0$, $\beta_\veps \to \infty$ and $\beta_\veps^2 \delta (\veps) \to 0$. For this choice of $\beta$, $(t_\veps / \beta_\veps)^{2} \gg \delta(\veps)$ for some $t_\veps \to 0$. It follows from Lemma \ref{le:concentration} and \eqref{eq:conc1} that for any $h_2,h_3$ such that $h_1 < h_2 < h_3 < h$, $s_n(\beta_\veps) \geq h_3$ for all $n$ large enough (depending on $\veps$), because the event $\mathcal E$ overlaps with the event from Lemma \ref{le:concentration} when $\beta_{\epsilon}$ satisfies this condition. Let $0 < \alpha < 1$. Applying again Lemma \ref{le:concentration}, we deduce that the event $\mathcal E_2 = \{ \frac 1 n \ln Z_{G_n} (\beta_\veps) \geq h_2\}$ has probability greater than $\alpha$ for all $n$ large enough.

Now, there exists $\eta_\veps$ such that, as $\veps \to 0$, $\eta_\veps \to 0$ and $\beta_\veps \eta_\veps \to \infty$ (for example $\eta_\veps = 1/\sqrt \beta_\veps$). We apply \eqref{eq:ZGFG2} with $\veps = \eta_\veps$. We get on the event $\mathcal E_2$, if $\veps$ is small enough,
$$
 H_{G_n}(\mu,r,\eta_\veps)  \geq \frac 1n \ln Z_{G_n}(\beta_\veps) - \frac 1 n \ln 2 \geq h_2 - \frac 1 n \ln 2 . 
$$  
The right-hand side is larger than $h_1$ if $n$ is large enough. We deduce that $\bar h(\mu,r,\alpha) \geq h_1$, since $h_1$ can be arbitrarily close to $h$, the first claim follows.

The second claim is proven similarly. Since $H_{G_n} ( \mu,r,\veps)$ takes value in $\{-\infty\} \cup [0,\infty)$, we have $H_{G_n} ( \mu,r,\veps) \leq -1$ if and only if $H_{G_n} ( \mu,r,\veps) = -\infty$. We may thus prove the second claim with $h \geq -1$. Assume that \eqref{eq:conc2} holds for some $\delta(\veps)$ which will be defined later on. We now set $\mathcal E  = \{ H_{G_n}(\mu,r,\veps) \leq h \}$. On the event $\mathcal E$, from \eqref{eq:ZGFG2}, we have for all $\beta >0$, 
$$
\frac 1n \ln Z_{G_n}(\beta) \leq \frac 1 n \ln 2  + h \vee (\ln |M| - \beta \veps). 
$$
If $\beta \geq \beta_\veps = (\ln |M| + 1)/ \veps$ then we get, for all $n$ large enough
$$
\frac 1n \ln Z_{G_n}(\beta) \leq \frac 1 n \ln 2  + h. 
$$
For our choice of $\delta(\veps)$, we have $\delta(\veps) \beta^2_\veps \to 0 $ as $\veps \to 0$. 
Then, if $h_1 > h_2 > h_3 > h$ and $\veps$ is small enough, we deduce by Lemma \ref{le:concentration} that
$$
s_n(\beta_\veps) \leq h_3
$$
for all $n$ large enough. We apply again Lemma \ref{le:concentration} and deduce that the event $\mathcal E_2 = \{ \frac 1 n \ln Z_{G_n} (\beta_\veps) \leq h_2\}$ has probability greater than $\alpha$ for all $n$ large enough. Finally, from \eqref{eq:ZGFG1}, we have on the event $\mathcal E_2$, 
$$
H_{G_n} ( \mu,r,\veps^2) \leq h_2 + \beta_\veps \veps^2
$$ 
The latter is less than $h_1$ for all $\veps$ small enough. The second claim follows. Obviously, the same argument works with $\liminf$ and $\underline h(\mu,r,\alpha)$.
\end{proof}

\subsection{Maximizers of the annealed entropy: proofs of Lemma \ref{th:minSigma} and Lemma \ref{le:staredgetoedgevertex}}
\label{subsec:SigmaM}

In this subsection, we prove Lemma \ref{th:minSigma} and Lemma \ref{le:staredgetoedgevertex}. If $X,Y$ are discrete random variables, we recall that the relative entropy of $X$ given $Y$ is 
$$
H(X |Y) = - \sum_{x,y} \dP ( (X,Y) = (x,y) ) \ln \dP ( X = x | Y = y),  
$$
where $\dP ( A |B) = \dP ( A \cap B ) / \dP (B)$ is the usual conditional probability (if $\dP (B) = 0$, $\dP(A |B)$ takes an arbitrary value). In other words, $H(X|Y)$ is the average over $Y$ of the entropy of the conditional law of $X$ given $Y$. We will repeatedly use that 
\begin{equation}\label{eq:propRE}
H(X,Y) = H (Y) + H (X|Y) \quad \hbox{ and } \quad H (X|(Y,Y') ) \leq H(X |Y),
\end{equation}
with equality if and only if $X$ conditioned on $Y$ is independent of $Y'$.

We start with the proof of Lemma \ref{th:minSigma}.

\begin{proof}[Proof of Lemma \ref{th:minSigma}] The following fact is useful. For a given integer $r \geq 1$, we introduce the finite set $N = M^{S_{r-1}}$ where as usual $S_{r-1} = B_{r-1}(o)$. We consider the map $\Psi$ from $M^{T_d}$ to $N^{T_d}$ which maps $x$ to $ \Psi(x)$ such that for $v \in T_d$, $\Psi(x)_v$ is the restriction of $x$ to $S_{r-1} (v)$ (composed by a given isomorphism from $S_{r-1}(v)$ to $S_{r-1}$). If $X$ is a process on $T_d$ then for all integers $t \geq 0$, we have $\Sigma_{t+r}(X) = \Sigma_{t+1}(\Psi(X))$. Moreover, if $X$ is a $r$-Markov process, then $\Psi(X)$ is an edge-Markov process.

As a byproduct, it is sufficient to prove Theorem \ref{th:minSigma} with $r=1$: we should check that 
\begin{equation*}\label{eq:S2S1}
\Sigma_2 (X)  \leq \Sigma_1 (X)
\end{equation*}
with equality if and only if $X_{S_2}$ is an edge Markov process. Note that the above inequality can be equivalently written as 
\begin{equation}\label{eq:S2S1b}
H(X_{S_2})   - H(X_{S_1}) - \frac d 2 H (X_{E_2}) +  \frac d 2 H (X_{E_1}) \leq 0.
\end{equation}

To check that \eqref{eq:S2S1b} holds, we need some extra notation. We denote by $L= \{2,\ldots,d\}$ the left side of $E_2$ along $E_1 = \{o,1\}$. We also set $L_i = \{(i,1),\ldots,(i,d-1)\}$ with $i=1 ,\ldots,d$. We have $S_1 = L \cup E_1$ and thus, from \eqref{eq:propRE}
$$
H(S_1 )  = H(E_1) + H (L |E_1), 
$$
where for ease of notation, for sets $S,T$, we write $H(S)$ and $H(S|T)$ in place of $H(X_S)$ and $H(X_S|X_T)$. 
Similarly, since $E_2 = S_1 \cup L_1$,
$$
H(E_2) = H(S_1)  + H(L_1 |S_1) = H(E_1) + H(L |E_1)+ H(L_1 |S_1). 
$$
Finally, since  $S_2$ is the disjoint union of $E_2$ and $\cup_{i=2}^d L_i$, we have,
 $$
 H (S_2) = H(E_2) + H ( \cup_{i=2}^d L_i| E_2).
 $$
The last three identities imply that Equation \eqref{eq:S2S1b} is equivalent to 
\begin{equation}\label{eq:S2S1t}
H ( \cup_{i=2}^d L_i | E_2)  - \PAR{\frac d 2 - 1 } H(L_1 | S_1) - \frac d 2 H ( L |E_1 ) \leq 0. 
\end{equation}
Using the invariance, we deduce from \eqref{eq:propRE} that 
$$
H ( \cup_{i=2}^d L_i | E_2)  \leq (d-1) H ( L_2 |E_2) 
$$
with equality if and only if there is conditional independence of the $X_{L_i}$'s given $X_{E_2}$. Now, since $E_2$ contains $S_1
$, we get 
$$
 H ( L_2 |E_2) \leq H (L_2 |S_1 ) = H (L_1 |S_1 ) ,
$$
with equality in case of conditional independence of $X_{L_2}$ and $X_{E_2 \backslash S_1}$ given $X_{S_1}$. It follows that the left-hand side of \eqref{eq:S2S1t} is upper bounded by 
\begin{equation}\label{eq:HSL}
\frac d 2 H ( L_1 | S_1 ) - \frac d 2 H ( L |E_1 ).
\end{equation}
From the invariance of $X$ by switching the two sides of $e$, we get $H ( L |E_1 ) = H(L_1 |E_1)$ and thus \eqref{eq:HSL} is equal to 
$$
\frac d 2 \PAR{ H ( L_1 | S_1 ) -   H ( L_1 |E_1 )}.
$$
Using again \eqref{eq:propRE}, since $E_1 \subset S_1$, this last expression is always non-positive with equality if and only if $X_{L_1}$ is conditionally independent of $X_{S_1}$ given $X_e$. This proves that \eqref{eq:S2S1b} holds. By considering the case of equality, it is then easy to check that it implies that $X_{S_2}$ is an edge Markov process. It concludes the proof of Lemma \ref{th:minSigma}.\end{proof}

We now prove Lemma \ref{le:staredgetoedgevertex}. 

\begin{proof}[Proof of Lemma \ref{le:staredgetoedgevertex}] Let $X \in \cI_d(M)$. From \eqref{eq:propRE}, with the notation used in the proof of Lemma \ref{th:minSigma}, we have 
$$
H(S_1 ) = H (o) + H(S_1 | o) \leq H (o) + d H ( E_1 | o ),
$$
with equality if the variables $(X_o,X_i)_{1 \leq i \leq d}$ conditioned on $X_o$ are independent. Using \eqref{eq:propRE} again, we get 
$$
H(S_1 )  \leq d H ( E_1  )  - (d-1) H( o ).
$$
So finally, $\Sigma_1(X) = H(S_1) - \frac d 2 H(E_1) \leq  \frac d 2 H ( E_1  )  - (d-1) H( o ) = \Sigma_e(X)$ as requested.
\end{proof}

\subsection{Combinatorial characterization of the annealed entropy}
\label{subsec:SigmaC}

In this subsection, we give a combinatorial interpretation of the annealed entropy $\Sigma_r(\mu)$.  Recall that $\mathcal G_n(d)$ is the set of simple $d$-regular graphs on the vertex set $[n]$. For $\mu \in \cI_d(M)$,  $r \geq 0$ integer and $\veps >0$, we define the set of colored graphs whose $r$-neighborhood is close to $\mu_r$ as
\begin{eqnarray*}
\cG_n(\mu,r,\veps) &= & \{ ( G,f) : G \in \cG_n(d) , f \in M^n , \dd ( \distr_{G} (f)_r , \mu_r) \leq \veps  \} \\
 & = & \bigsqcup_{G \in \cG_n(d) } \mathcal F_G(\mu,r,\veps),
\end{eqnarray*}
where  $ \sqcup$ is the disjoint union and $\mathcal F_G(\mu,r,\veps) $ was defined in \eqref{eq:defFG}. We then set  
\begin{equation}\label{eq:defSigman}
\Sigma_n(\mu,r,\veps) = \frac 1 n \PAR{ \log |\cG_n(\mu,r,\veps) | -  \log |\cG_n(d) | } = \frac 1 n \log \dE  | \mathcal F_{G_n} (\mu,r,\veps)|,
\end{equation}
where the expectation is with respect to the random graph $G_n$ uniformly distributed on $\cG_n(d)$. In comparison with the definition of $H_{G_n} (\mu, r  , \veps)$ in \eqref{eq:defHG}, $\Sigma_n(\mu,r,\veps)$ appears as an annealed quantity in the sense that there is an average over the randomness of $G_n$ inside the logarithm. The following theorem asserts that $\Sigma_n(\mu,r,\veps)$ is close to $\Sigma_r(\mu)$ as $n$ goes to infinity and $\veps$ goes to $0$.

\begin{theorem}\label{th:combS}
Let $\mu \in \cI_d(M)$ and $r \geq 1$ integer. We have
$$
\lim_{\veps \to 0} \liminf_{n \to \infty} \Sigma_n(\mu,r,\veps) = \lim_{\veps \to 0} \limsup_{n \to \infty} \Sigma_n(\mu,r,\veps) = \Sigma_r(\mu).
$$
\end{theorem}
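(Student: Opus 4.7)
The plan is to prove the two-sided bound
\begin{equation*}
\Sigma_r(\mu) \leq \liminf_{\veps\to 0}\liminf_{n\to\infty}\Sigma_n(\mu,r,\veps),
\qquad
\limsup_{\veps\to 0}\limsup_{n\to\infty}\Sigma_n(\mu,r,\veps)\leq \Sigma_r(\mu)
\end{equation*}
via a first-moment (annealed) Stirling calculation in the configuration model. Both $\mathcal F_G(\mu,r,\veps)$ and $\Sigma_r(\mu)=H(X_{S_r})-\tfrac d2 H(X_{E_r})$ depend on $\mu$ only through $p:=\mu_r\in\cI_{d,r}(M)$ (since $E_r\subset S_r$), so we parameterize by $p$. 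By standard contiguity of $\cG_n(d)$ with the configuration model conditioned on simplicity (see \cite{MR1864966}), it suffices, modulo $o(n)$ corrections in the log-count, to count pairs $(m,f)$, with $m$ a matching on $nd$ half-edges giving the multigraph $G(m)$ and $f\in M^n$, such that $\distr_{G(m)}(f)_r\in B_\veps(p)$.

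For the upper bound, partition these pairs by their fine type $\tau_{m,f}\in\cI_{d,r}(M)$, the empirical distribution of colored $r$-balls. Since a typical vertex has tree $r$-neighborhood (exceptions have density $O(1/n)$), a multinomial plus matching Stirling computation yields, for each admissible $\tau$,
\begin{equation*}
\#\{(m,f):\tau_{m,f}\approx\tau\}\ \leq\ (nd-1)!!\cdot\exp\!\Bigl(n\bigl[H(\tau_{S_r})-\tfrac d2 H(\tau_{E_r})+o_\veps(1)\bigr]\Bigr),
\end{equation*}
where $H(\tau_{S_r})$ is the multinomial entropy of assigning one of $|M|^{|S_r|}$ colored $r$-ball types to the $n$ vertices and $\tfrac d2 H(\tau_{E_r})$ is the rate of the matching constraint that paired half-edges carry consistent $(r-1)$-subtree labels. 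Summing over a finite $\veps$-cover of $B_\veps(p)$ and invoking continuity of $\tau\mapsto\Sigma(\tau)$ on finite configurations gives $\limsup_n\Sigma_n\leq\Sigma_r(\mu)+o_\veps(1)$. For the lower bound, the same Stirling formula applied to the single type $\tau=p$ (rounded to integer vertex counts) yields
\begin{equation*}
\#\{(m,f):\tau_{m,f}\approx p\}\ \geq\ (nd-1)!!\cdot\exp\!\Bigl(n\bigl[\Sigma_r(\mu)-o_\veps(1)\bigr]\Bigr),
\end{equation*}
and dividing by $|\cG_n(d)|$ produces the desired $\liminf$ bound.

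The main obstacle is the Stirling bookkeeping for the matching count with prescribed colored $r$-ball statistics. For $r=1$ this is the classical colored configuration-model count: half-edges of color $a$ matched to half-edges of color $b$ with frequencies given by $\mu_{E_1}$, whose exponential rate is exactly $\tfrac d 2 H(X_{E_1})$ by Stirling. For $r\geq 2$, each half-edge must additionally carry a label encoding the $(r-1)$-ball of the subtree beneath it (alphabet $N=M^{S_{r-1}}$), with the constraint that matched labels agree consistently; a clean way to absorb this is the alphabet-extension map $\Psi:M^{T_d}\to N^{T_d}$ from the proof of Lemma~\ref{th:minSigma}, under which $\Sigma_r$ becomes $\Sigma_1$, reducing the general case to $r=1$ after accounting for the $O(1/n)$ fraction of vertices with non-tree $r$-neighborhood.
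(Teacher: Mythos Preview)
Your outline differs from the paper's route. The paper does not redo the Stirling calculation: it invokes the Delgosha--Anantharam count \cite{DA19} (which is stated in terms of \emph{unlabeled} rooted colored trees and gives the quantity $J_r(\mu)$), and then the entire content of the proof is the conversion identity
\[
J_r(\mu)=-s(d)-\log(d!)+H(X_{S_r})-\tfrac d2 H(X_{E_r}),
\]
established by carefully tracking the entropy of a uniform random labeling (Lemma~\ref{le:exchH} on exchangeable vectors). Your proposal replaces this by a direct labeled configuration-model count; that is a legitimate alternative, but it hides exactly the same difficulty in a different place.

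The gap is in the sentence ``a multinomial plus matching Stirling computation yields, for each admissible $\tau$, $\#\{(m,f):\tau_{m,f}\approx\tau\}\le (nd-1)!!\exp(n[H(\tau_{S_r})-\tfrac d2 H(\tau_{E_r})+o_\veps(1)])$''. In the configuration model the half-edges are ordered, so the empirical $r$-ball distribution of $(m,f)$ is a \emph{non-invariant} measure $\hat p$ on $M^{S_r}$, and $\distr_{G(m)}(f)_r$ is its symmetrization. The multinomial--matching count therefore gives, for fixed $\hat p$, the rate $H(\hat p)-\tfrac d2 H(q(\hat p))$, and what you actually need for the upper bound is
\[
\max_{\hat p:\ \mathrm{sym}(\hat p)=\tau}\ \Bigl(H(\hat p)-\tfrac d2 H(q(\hat p))\Bigr)\ \le\ H(\tau_{S_r})-\tfrac d2 H(\tau_{E_r}).
\]
For $r=1$ this is fine because $q(\hat p)$ depends only on $\mathrm{sym}(\hat p)$. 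For $r\ge 2$ the half-edge label lives in $M^{E_r}$, whose own automorphism group acts nontrivially, so $H(q(\hat p))$ can drop when $\hat p$ is asymmetric while $H(\hat p)$ also drops; the two effects compete, and verifying the inequality is precisely the labeled/unlabeled bookkeeping the paper carries out with Lemma~\ref{le:exchH}. Your suggested $\Psi$-reduction to $r=1$ does give the upper bound cleanly (it is an injection $f\mapsto\Psi(f)$ into $N$-colorings), and the single term $\hat p=p$ gives the lower bound; but as written you invoke $\Psi$ for both directions, and for the lower bound $\Psi$ is not enough --- a generic $g\in\mathcal F_G(\Psi_*\mu,1,\veps)$ need not lie in the image of $\Psi$, so you cannot read off an $f$ from it. If you split the argument as ``lower bound by the direct $\hat p=p$ construction, upper bound by the $\Psi$-injection and the $r=1$ case'', the proof goes through and is a self-contained alternative to the paper's; but the proposal as written does not make that split and instead asserts the labeled Stirling rate without justifying the optimization over $\hat p$.
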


\begin{proof}
One side of this identity can be found in \cite[Lemma 6.2]{eigenvector}. We will however give a proof which relies on \cite{DA19}  which is a generalization of \cite{MR3405616} to colored graphs. This is interesting because it connects \cite{MR3405616,DA19} to the entropic inequalities found in \cite{typical,eigenvector}. First, a classical result of Bender and Canfield \cite{BENDER1978296} implies that 
\begin{equation}\label{eq:fBC}
\frac 1 n \log |\cG_n(d)| = \frac d 2 \log n - s(d) - \log (d!) + o(1), 
\end{equation}
where $s(d) = d/2 - (d/2) \log d$. On the other hand, Proposition 5 and Proposition 6 in  Delgosha and Anantharam \cite{DA19} imply that 
$$
\lim_{\veps \to 0} \liminf_{n \to \infty} \PAR{ \frac 1 n  \log |\cG_n(\mu,r,\veps) | -  \frac d 2 \log n } = \lim_{\veps \to 0} \limsup_{n \to \infty} \PAR{ \frac 1 n \log |\cG_n(\mu,r,\veps) | -  \frac d 2 \log n } = J_r(\mu),
$$
where $J_r(\mu)$ has an explicit formula that we now describe (the same formula appears in \cite{MR3405616}).

We define $\widetilde \cT^\bullet_{r-1}$ as the set of unlabeled colored rooted $(d-1)$-ary trees of depth $r-1$. An element $g = (t,t') \in \widetilde \cE_r = \widetilde \cT^\bullet_{r-1} \times \widetilde \cT^\bullet_{r-1}$ can be seen as an unlabeled coloring of $E_r$  rooted on the oriented edge $(o,1)$. For $g = (t,t')$ in $\widetilde \cE_r$ and  $X$ a coloring of $T_d$, we then define  
$N_X (g)$ as the number of neighbors $v$ of the root such that $X$ restricted to $E_r (o,v) = B_{r-1} (\{o,v\})$ is isomorphic to $g$: more precisely such that the restriction of $X$ to $(d-1)$-ary tree rooted at $o$ (respectively $v$) in $E_r(o,v)\backslash \{o,v\}$ is isomorphic to $t$ (respectively $t'$). By construction 
\begin{equation}\label{eq:sumNX}
\sum_{g \in \widetilde \cE_r } N_X (g) = \deg(o) = d. 
\end{equation}
If $X$ is a random coloring of $T_d$ with $\mu$, we then define a probability measure on $\widetilde \cE_r $ by, for all $ g \in \widetilde \cE_r $:
$$
\pi_\mu (g) = \frac{\dE [N_X (g)]}{d},
$$
where the expectation is with respect to the randomness of $X$. We have
$$
J_r(\mu) = -s(d) + H(\widetilde X_{S_r} ) - \frac d 2 H (\pi_\mu) - \sum_{g \in \widetilde \cE_r } \dE [\log (N_X (g)!)],
$$
where $\widetilde X_{S_r} $  is the rooted unlabeled coloring associated to $X_{S_r}$. As a sanity check, if $M$ is a singleton, then $J_r(\mu) = - s(d) - \log (d!)$ and we retrieve Equation \eqref{eq:fBC}. Moreover, in view of Equation \eqref{eq:fBC}, the theorem follows from the claim 
\begin{equation}\label{eq:labelback}
J_r(\mu) = -s(d) - \log (d!) + H(X_{S_r} ) - \frac d 2 H (X_{E_r}).
\end{equation}
The expression \eqref{eq:labelback} is obtained by putting random labeling on an unlabeled rooted coloring and following the effect on the Shannon entropy.  We first observe that, since $X$ is invariant, for any  $ g \in \widetilde \cE_r $, we have
$$
\dP ( X_{E_r} \simeq g ) =   \frac{1}{d} \sum_{v=1}^d \dP ( X_{E_r(o,v)} \simeq g ) = \pi_\mu (g).
$$
It follows that $\pi_\mu$ is the law of $\widetilde X_{E_r}$ defined as the unlabeled coloring associated to $X_{E_r}$ rooted at the oriented edge $(o,1)$. Besides, since $X$ is invariant, $X_{E_r}$ is in one-to-one correspondence with the triple $(\widetilde X_{E_r},\sigma,\sigma')$ where, given $\widetilde X_{E_r}$, $\sigma$ and $\sigma'$ are independent and $\sigma$ is a uniform random labeling of $\widetilde X_{E_r}$ restricted to $E_r^o$, the  $(d-1)$-ary tree rooted at $o$  in $E_r \backslash \{o,1\}$, and similarly for $\sigma'$. From the relative entropy identity \eqref{eq:propRE}, we find that 
$$
H(X_{E_r}) = H(\pi_\mu) + 2 K,
$$
where $K$ is the relative entropy of $\sigma$ given $\widetilde X_{E_r^o}$.

Secondly, we observe that $X_{S_r}$ is in one-to-one correspondence with the vector $Y = (X_{E^1_r} ,\ldots,  X_{E^d_r})$ where $E^k_r$ is the    $(d-1)$-ary tree rooted at $k$  in $E_r \backslash \{o,k\}$. It follows that $H(Y) = H( X_{S_r})$. Also, if $\widetilde Y = (\widetilde X_{E^1_r } ,\ldots, \widetilde  X_{E^d_r})$, we find from what precedes and the invariance of $X$ that 
$$
 H( X_{S_r}) = H(Y) = H(\widetilde Y) + d K. 
$$
Finally, the difference between $\widetilde Y$ and $\widetilde X_{S_r}$ is that the neighbors of $o$ are ordered (or labeled) in $\widetilde Y$. We deduce from Lemma \ref{le:exchH} below that 
$$
H(\widetilde Y) = H ( \widetilde X_{S_r})  - \sum_{g \in \widetilde \cE_r} \dE [ \log (N_X(g) !)] +  \log (d!). 
$$
This concludes the proof of \eqref{eq:labelback}.
\end{proof}

In the proof of Theorem \ref{th:combS}, we have used the following elementary lemma. Recall that a vector is exchangeable if its law is invariant by any permutation of its coordinates. 

\begin{lemma}\label{le:exchH}
Let $F$ be a finite set and $Z = (Z_1, \ldots, Z_n)$ a random exchangeable vector in $F^n$. The counting measure $N_Z = \sum_{i=1}^n \delta_{Z_i}$ associated to $Z$ satisfies 
$$
H(Z) = H(N_Z) - \sum_{x \in F} \dE [ \log N_Z(x) !]+ \log n!. 
$$
\end{lemma}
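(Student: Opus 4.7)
The plan is to use the chain rule for Shannon entropy together with the fact that exchangeability forces the conditional law of $Z$ given $N_Z$ to be uniform on the fiber.

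First I would write $H(Z) = H(N_Z) + H(Z \mid N_Z)$ using the standard identity recalled in \eqref{eq:propRE} together with the fact that $N_Z$ is a deterministic function of $Z$ (so $H(N_Z,Z) = H(Z)$). The whole content of the lemma is therefore in evaluating $H(Z \mid N_Z)$.

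Next I would identify the conditional law of $Z$ given $N_Z$. Fix any counting measure $m : F \to \dN$ with $\sum_{x \in F} m(x) = n$ and $\dP(N_Z = m) > 0$. The fiber $\{z \in F^n : N_z = m\}$ is exactly one orbit of the symmetric group $S_n$ acting on $F^n$ by permutation of coordinates, and has cardinality equal to the multinomial coefficient $n!/\prod_{x \in F} m(x)!$. By exchangeability of $Z$, all points in this orbit have the same probability, so the conditional law of $Z$ given $N_Z = m$ is uniform on this fiber. Consequently the conditional entropy equals the logarithm of the fiber size:
$$
H(Z \mid N_Z = m) = \log \frac{n!}{\prod_{x \in F} m(x)!} = \log n! - \sum_{x \in F} \log m(x)!.
$$

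Averaging over $m \sim N_Z$ gives
$$
H(Z \mid N_Z) = \log n! - \sum_{x \in F} \dE[\log N_Z(x)!],
$$
and substituting into the chain rule yields the claimed identity. There is no real obstacle here; the only point to be careful about is that exchangeability is used in precisely one place (uniformity on each fiber), and that conditioning on an event of zero probability can be handled by restricting the sum to counting measures in the support of $N_Z$.
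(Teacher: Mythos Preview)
Your proof is correct and follows essentially the same approach as the paper: both identify the fiber $\{z:N_z=m\}$ as a single $S_n$-orbit of multinomial size, use exchangeability to get uniformity on this fiber, and then apply the chain rule \eqref{eq:propRE}. The only cosmetic difference is that the paper writes the uniformity as an explicit formula for $\dP(Z=z)$ in terms of $\dP(N_Z=N_z)$ before invoking the relative entropy identity, whereas you compute $H(Z\mid N_Z=m)$ directly; the content is identical.
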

\begin{proof}
We consider the equivalence class on $F^n$, $z \sim z'$ if $z$ and $z'$ are equal up to a permutation of the coordinates of $z$. We have $z \sim z'$ if and only if $N_z = N_{z'}$. Moreover, the number of vectors in the equivalence class of $z$ is given by the multinomial formula:
$$
\frac{n!}{\prod_{x \in F} N_z(x)!}. 
$$
Using the exchangeability of $Z$, we deduce that 
$$
\dP ( Z= z) = \frac {\prod_{x \in F} N_z(x)!}{n!} \sum_{z' \sim z} \dP ( Z = z') = \frac {\prod_{x \in F} N_z(x)!}{n!}  \dP ( N_Z = N_z). 
$$
It then remains to use the relative entropy formula \eqref{eq:propRE}.
\end{proof}

\section{Proofs of main results}
\label{sec:proofsMain}

\subsection{First moment method: proof of Theorem \ref{th:1stmoment}}

Let $r \geq 1$, $\veps >0$ and $G_n$ be uniformly sampled on $\cG_n(d)$. From Markov inequality, for any real $h$,  
$$
\dP ( H_{G_n} (\mu, r, \veps) \geq h ) = \dP ( |\mathcal F_{G_n} (\mu, r, \veps)| \geq e^{n h} ) \leq e^{-n h} \dE |\mathcal F_{G_n} (\mu, r, \veps)|.
$$ 
In particular, we find
$$
\frac 1 n \log \dP ( H_{G_n} (\mu, r, \veps) \geq h)  \leq \Sigma_n(\mu,r,\veps) - h.
$$
From Theorem \ref{th:combS}, we deduce the large deviations bound
$$
 \limsup_{n \to \infty}\frac 1 n \log \dP ( H_{G_n} (\mu, r, \veps) \geq h )  \leq \Sigma_r(\mu) - h + \delta(\veps),
$$
where $\delta(\veps)$ goes to $0$ as $\veps \to 0$.  If $h > \Sigma_r(\mu)$, the right-hand side of the above expression is negative for all $\veps$ small enough. We deduce in particular that for all $\veps$ small enough, $\dP ( H_{G_n} (\mu, r, \veps) \geq h ) $ converges to $0$. By Lemma \ref{le:hconst}, this proves that $\overline h(\mu,r) < h $. It concludes the proof of Theorem \ref{th:1stmoment}. \qed

\subsection{Second moment method: proof of Theorem  \ref{th:2ndmoment}}

Let $\mu \in \cI_d(M)$, $r \geq 1$ and set $p = \mu_r \in \cI_{d,r}(M)$. In view of Theorem \ref{th:1stmoment}, we should prove that
\begin{equation}\label{eq:step2}
\underline h(\mu,r) \geq \Sigma(p).
\end{equation}
For ease of notation, we write $\mathcal F_{G} (p, \veps)$ in place of $\mathcal F_{G} (\mu, r, \veps)$ (since this depends of $\mu$ only through $\mu_r = p$). The Paley-Zygmund inequality implies that 
\begin{eqnarray*}
\dP \PAR{ H_{G_n} (\mu, r, \veps) \geq   \Sigma_{n} ( \mu, r , \veps) -  \frac 1 n  } & = & \dP \PAR{   |\mathcal F_{G_n} (p, \veps)|
  \geq  e^{-1}  \dE |\mathcal F_{G_n} (p, \veps)|} \\
  &\geq & ( 1- e^{-1})^2 \frac{  \PAR{\dE |\mathcal F_{G_n} (p, \veps)| }^2}{\dE |\mathcal F_{G_n} (p, \veps)|^2} \\
& = & ( 1- e^{-1})^2 \frac{\exp(2n \Sigma_{n} ( \mu, r , \veps))}{\dE |\mathcal F_{G_n} (p, \veps)|^2}.
\end{eqnarray*}
Since $\mu_r = p$, we have $\Sigma(p) = \Sigma_r(\mu)$ and, by Theorem \ref{th:combS}, 
$$
\liminf_{n \to \infty} \Sigma_{n} ( \mu, r , \veps) \geq \Sigma(p) - \delta(\veps),
$$
where $\delta(\veps)$ goes to $0$ as $\veps \to 0$.  We deduce that if we manage to prove that 
\begin{equation}\label{eq:2ndmoment}
\limsup_{n \to \infty} \frac 1 n \log \dE |\mathcal F_{G_n} (p, \veps)|^2 \leq 2 \Sigma(p) + \delta'(\veps),
\end{equation}
where $\delta'(\veps)$ goes to $0$ as $\veps \to 0$, then we would get that 
$$
\liminf_{n \to \infty} \frac 1 n \log \dP \PAR{ H_{G_n} (\mu, r, \veps) \geq   \Sigma ( p) -  2 \delta(\veps)   } \geq - 2 \delta(\veps) - \delta'(\veps). 
$$
From Equation \eqref{eq:conc1} in Theorem \ref{th:concmain}, this would imply that 
$\underline h(\mu,r)  \geq  \Sigma(p)$ as claimed in \eqref{eq:step2}. 

It thus remains to prove Equation \eqref{eq:2ndmoment}. For concreteness, we may assume that the chosen distance $\dd$ generating the weak topology is the total variation distance. To that end, let $\veps >0$ and  $\mathcal N_{\varepsilon}$ be an $\varepsilon$-net on the set of invariant coupling $q\in \cI_{d,r}(M^2)$ of $p$ and $p$. Given a graph $G \in \cG_n(d)$, consider two colorings of $G$ with color set $M$  whose $r$-neighborhood statistics are at most at  total variation distance $\varepsilon$ from $p$.  The number of such pairs is $|\mathcal F_G (p, \varepsilon)|^2$. On the other hand, each pair is in fact a  coloring of $G$ on $M^2$. Then its $r$-neighborhood statistics is an element $q'\in\cI_{d,r} (M^2)$. Since both marginals of $q'$ are at most at total variation distance $\varepsilon$ from $p$, there is a measure $q ^*\in\cI_{d,r} (M^2)$ whose total variation distance is at most $2\varepsilon$ from $q'$ and whose both marginals are exactly $p$ (for each marginal of $q'$, there is an invariant coupling of this marginal and $p$ such that the two colorings are equal with probability $1-\veps$). Therefore there exists an element in the $\varepsilon$-net, $q \in \mathcal N_\veps$ such that  the distance of $q$ from the original pair of coloring is at most $3\varepsilon$. We conclude that 
\[
|\mathcal F_G (p, \varepsilon)|^2 \leq \sum_{q\in\mathcal N_{\varepsilon}} |\mathcal F_G (q, 3\varepsilon)|.\]
This implies that 
\begin{align*}
\dE |\mathcal F_{G_n} (p, \veps)|^2  &\leq \frac{1}{|\cG_n(d)|}\sum_{G\in \mathcal G_n(d)} \sum_{q\in \mathcal N_{\varepsilon}}  |\mathcal F_G (q, 3\varepsilon)| =\sum_{q\in \mathcal N_{\varepsilon}} \exp(n \Sigma_n(\mu(q),r,3\veps)).
\end{align*}
It follows that, 
$$
\frac 1 n \log \dE |\mathcal F_{G_n} (p, \veps)|^2 \leq \max_{q \in \mathcal N_\veps} \Sigma_n(\mu(q),r,3\veps) + \frac 1 n \log |\mathcal N_{\varepsilon}|.
$$
By Theorem \ref{th:combS}, we deduce that, for some function $\delta'(\veps)$ going to $0$ with $\veps \to 0$,
$$
\limsup_{n\to \infty} \frac 1 n \log \dE |\mathcal F_{G_n} (p, \veps)|^2 \leq \max_{q \in \mathcal N_\veps} \Sigma(q) + \delta'(\veps).
$$
By assumption for any $q \in \cI_{d,r}(M^2)$, $\Sigma(q) \leq 2 \Sigma(p)$. We thus have proved that \eqref{eq:2ndmoment} holds. \qed

\subsection{Proof of Theorem \ref{th:edgeMarkov}}

In view of Theorem \ref{th:1stmoment} and Theorem \ref{th:2ndmoment}, it remains to prove that for any $t \geq r$,
$$
\underline h (\mu(p),t) \geq  \Sigma_r(\mu(p)) = \Sigma(p).
$$
Let $q\in \cI_{d,t}(M^2)$ be an invariant coupling of $(\mu(p))_t$ and $(\mu(p))_t$. Then $q_r$ is an invariant coupling of $p$ and $p$ (since $((\mu(p))_t)_r = \mu(p)_r = p$ by construction). By Lemma \ref{th:minSigma}, we have 
$$
\Sigma(q)  =  \Sigma (\mu(q))  \leq \Sigma ( \mu(\mu(q)_r) )  = \Sigma (q_r). 
$$
By assumption $\Sigma (q_r) \leq 2 \Sigma(p)$. However, by Lemma \ref{th:minSigma}, we have $\Sigma(p) =  \Sigma ( \mu(p)_t)$. It follows that 
$$
\Sigma(q) \leq 2 \Sigma ( \mu(p)_t ). 
$$ 
From Equation \eqref{eq:step2} applied to the radius $t$, this implies that 
$$
\underline h (\mu(p) ,t)  \geq   \Sigma( \mu(p)_t).
$$
By a last application of Lemma \ref{th:minSigma}, the right-hand side of above expression is equal to $\Sigma(p)$. This concludes the proof of Theorem \ref{th:edgeMarkov}. \qed

\subsection{Application to factor graphs: proofs of Lemmas \ref{le:factorgraph} and Lemma \ref{le:copti}}

We start with the proof of Lemma \ref{le:factorgraph}. 

\begin{proof}[Proof of Lemma \ref{le:factorgraph}]
By construction, we have 
$$
Z_{G} = \sum_{f \in M^n} \prod_{v  = 1}^n e^{\psi((G,f,v)_r )} = \sum_{f \in M^n} e^{ n \langle \distr_{G} (f)_r, \psi \rangle }.$$

Let  $\veps >0$ and  $\mathcal N_{\varepsilon}$ be an $\varepsilon$-net of $ \cI_{d,r}(M)$. The function $ p \to \langle p , \psi \rangle$ being uniformly continuous (since $M$ is finite), there exists a function $\delta(\veps) \to 0$ as $\veps \to 0$ such that for any probability measure, say $q$, on rooted colored graphs of radius $r$,  if $\dd(q,p) \leq \veps$ then $|\langle q , \psi \rangle - \langle p, \psi \rangle | \leq \delta(\veps)$. If  $N_G$ is the number of colorings such that $\distr_{G} (f)$ is at distance larger than $\veps$ from $\mathcal N_{\varepsilon}$, it follows that 
\begin{eqnarray*}
Z_{G} &\leq &\sum_{ p \in \mathcal N_{\varepsilon}} |\mathcal F_{G} ( \mu(p),r,\veps) | e^{n \langle p, \psi \rangle  + n \delta(\veps)} + N_G e^{n\|\psi\|_{\infty}},\\
& \leq & |\cN_{\veps}| \max_{ p \in \cI_{d,r}(M)}  e^{n (  H_{G} (\mu(p), r, \veps)  +  \langle p, \psi \rangle + \delta (\veps))}  + N_G e^{n\|\psi\|_{\infty}}.
\end{eqnarray*}

Now, if $G_n$ is uniformly sampled on $\cG_n(d)$, then, for any fixed $\veps >0$, $\dP( N_{G_n} = 0)$ converges to $1$  (since $\distr_{G_n}$ converges in probability to a Dirac mass at $(T_d,o)$). Using \eqref{eq:concZGn} and taking the limit in $n$, we find 
$$
\limsup_{n \to \infty} \dE \frac 1 n \ln Z_{G_n} \leq \max_{ p \in \cI_{d,r}(M)} \PAR{     \bar h (  p )  +  \langle p, \psi \rangle + \delta' (\veps) }
$$
with $\delta'(\veps) \to 0$ as $\veps \to 0$. This gives the upper bound in Lemma \ref{le:factorgraph}.

For the lower bound, we write similarly, 
\begin{eqnarray*}
Z_{G} &\geq &\sum_{ p \in \mathcal N_{\varepsilon}} |\mathcal F_{G} ( \mu(p),r,\veps) | e^{n \langle p, \psi \rangle  - n \delta(\veps)},\\
& \geq & \max_{ p \in  \mathcal N_{\varepsilon}}  e^{n (  H_{G} (\mu(p), r, \veps)  +  \langle p, \psi \rangle  - \delta (\veps))}\\
& \geq & \max_{ p \in  \cI_{d,r}(M)}  e^{n (  H_{G} (\mu(p), r, \veps)  +  \langle p, \psi \rangle - 2 \delta (\veps))}.
\end{eqnarray*}
The conclusion follows easily. \end{proof}

Lemma \ref{le:copti} is a corollary of Lemma \ref{le:factorgraph}.

\begin{proof}[Proof of Lemma \ref{le:copti}]
For $\beta >0$, let $Z_G(\beta)$ be the factor graph model: 
$$
Z_{G} (\beta)= \sum_{f \in M^n} \prod_{v =1}^n e^{ n \beta \psi((G,f,v)_r)}. 
$$ 
By construction, we have 
$$
|M|^{-n} Z_{G}(\beta)  \leq e^{ \beta L_{G} } \leq  Z_{G}(\beta). 
$$
By Lemma \ref{le:factorgraph}, we find 
$$
\sup_{ p \in  \cI_{d,r}(M)} \PAR{ \frac{\underline h(p)}{\beta} +   \langle  p , \psi \rangle  - \frac{\ln |M|}{\beta}}\leq  \liminf_{n}\frac {\dE L_{G_n}}{ n} \leq \limsup_{n} \frac {\dE L_{G_n}}{ n} \leq \sup_{ p \in  \cI_{d,r}(M)}  \PAR{ \frac{\bar h(p)}{\beta} +  \langle  p , \psi \rangle }.
$$
We recall that $\bar h (p)$ and $\underline h (p)$ take value in $\{-\infty\} \cup [0,\ln |M|]$. We get 
$$
\sup_{ p \in  \cI_{d,r}(M) : \underline h(p) \geq 0} \PAR{   \langle  p , \psi \rangle  - \frac{\ln |M|}{\beta}}\leq  \liminf_{n}\frac {\dE L_{G_n}}{ n} \leq \limsup_{n} \frac {\dE L_{G_n}}{ n} \leq \sup_{ p \in  \cI_{d,r}(M): \underline h(p) \geq 0}  \PAR{  \frac{\ln |M|}{\beta} +  \langle  p , \psi \rangle }.
$$
We obtain the statement of the lemma by taking the limit $\beta \to \infty$.
\end{proof}

\section{Extension to processes on unimodular Galton-Watson trees}
\label{sec:extended}

\subsection{An extended setting}

We now discuss an extension to processes on random trees. We will focus our attention on {\em unimodular Galton-Watson trees}. In this section, we fix a probability measure $\pi$ on integers with positive and finite expectation: 
$$
d = \sum_{k=0}^\infty k \pi(k) > 0. 
$$
 We define $\hat \nu$, the size-biased version of $\nu$ as the probability measure defined by: for all integers $k \geq 0$
$$
\hat \pi(k) = \frac{ (k+1) \pi (k+1)}{d}.
$$
Then, the unimodular Galton-Watson tree with degree distribution $\pi$, is the Galton-Watson tree whose vertex set is a subset of $\dN^f$ defined in \eqref{eq:defNf} such that the root $o$ has a number of offsprings $N_o$ with distribution $\pi$ indexed by $1,\ldots, N_o$ and all other vertices $v$ have an independent number of offsprings $N_v$ with distribution $\hat \pi$ indexed by $(v,1),\ldots, (v,N_v)$. We will denote by $T$ a realization of this random tree and $\UGW(\pi)$ the law of the rooted tree $(T,o)$. We note that $(T,o)$ is randomly labeled in the sense defined in Subsection \ref{subsec:intro}.

For example, if $\pi$ is a Dirac mass at $d$ then $T$ is the $d$-regular tree. If $\pi$ is a Poisson random variable with mean $d$, then $\hat \pi = \pi$ and $T$ is a standard Galton-Watson tree with Poisson offspring distribution.

As its name suggests, the random rooted tree $T$ is unimodular. Recall that a random rooted graph $(G,o)$ is unimodular if for all non-negative functions $f$ on the set of doubly rooted graphs (a connected graph with two ordered distinguished vertices) which are invariant by isomorphisms, we have 
\begin{equation}\label{eq:defunimod}
\dE \sum_{v \in V} f (G,o,v) = \dE \sum_{v \in V} f (G,v,o).
\end{equation}
where $V$ is the vertex set of $G$ and the expectation is with respect to the randomness of $(G,o)$.

If $M$ is a finite set, an invariant process $X$ on $T$ is defined as a random colored tree $(T,X)$ such that $(T,X,o)$ is unimodular (that is, it satisfies \eqref{eq:defunimod} with $G = (T,X)$ and $f$ defined on the set of doubly rooted colored graphs which are invariant by isomorphisms). We denote by $\cI_\pi (M)$ the set of laws of $(T,X)$ with $X$ invariant colorings of $T$ on the color set $M$.

Now, in order to define  a relevant notion of sofic entropy, we need to choose the ensemble of finite graphs $G_n$ such that $\distr_{G_n}$ converges to $\UGW(\pi)$. A natural choice is  the family of uniform random graphs with a given degree sequence.  Let $d_n = (d_n(1), \ldots, d_n(n))$ be a sequence of integers, indexed by a subset of $\dN$, whose sum is even and such that 
$$
\distr_{d_n} = \frac 1 n \sum_{v=1}^n \delta_{d_n(v)}
$$
converges weakly to $\pi$. For technical simplicity, we assume that the degree sequence is uniformly bounded: 
 for some real $\Delta$,
\begin{equation}\label{eq:bddegree}
\sup_{n} \max_{1  \leq v \leq n} d_n (v) \leq \Delta.
\end{equation}
Note in particular that \eqref{eq:bddegree} implies that the support of $\pi$ is contained in $\{0,\ldots,\Delta\}$. From Erd\H{o}s-Gallai Theorem \cite{ERDOSGALLAI}, for all $n$ large enough, the set $\cG_n(d_n)$ of simple graphs with vertex set $[n] = \{1,\ldots, n\}$ such that for all $v \in [n]$, $v$  has degree $d_n(v)$ is not empty. Under these conditions, if $G_n$ is uniformly distributed on $\cG_n(d_n)$ then almost surely $\distr_{G_n}$ converges to $\UGW(\pi)$, see for example \cite{remcobook}. In the statements below, we will not repeat the above assumptions on the sequence $(d_n)$.

For a given probability measure $\mu \in \cI_\pi (M)$ (that is, $\mu$ is the law of an invariant coloring $(T,X)$), we can now reproduce the definition of weakly and typical processes and define the sofic entropy by taking limits of $H_{G_n} (\mu, r, \veps)$ defined in \eqref{eq:defHG}. We do not repeat the definitions since there are identical except that $G_n$ is now a random graph uniformly distributed on $\cG_n(d_n)$. For integer $r \geq 1$ and $0 < \alpha < 1$, we define the quantity $\bar h(\mu,r,\alpha)$ and $\underline h (\mu,r,\alpha)$ exactly as done below \eqref{eq:defHG}.  Lemma \ref{le:hconst} continues to holds in this more general setting.

\begin{lemma}\label{le:hconstT}
Let $\mu \in \cI_\pi(M)$ and $r \geq 0$. The function $\alpha \mapsto (\bar h(\mu,r,\alpha),\underline h(\mu,r,\alpha))$ is constant on $(0,1)$. 
\end{lemma}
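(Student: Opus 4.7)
The plan is to reduce to the setup of Theorem~\ref{th:concmain}: the constancy in $\alpha$ will follow from the same concentration inequality for $\ln Z_{G_n}(\beta)$, where $Z_G(\beta) = \sum_{f \in M^n} e^{-n\beta \dd(\distr_G(f)_r, \mu_r)}$ as in the proof of Theorem~\ref{th:concmain}. Once an analog of Lemma~\ref{le:concentration} is established in this generality, the proof of Theorem~\ref{th:concmain} applies without any change, since it uses $d$-regularity only through this concentration, and Lemma~\ref{le:hconstT} then follows by the same deduction as Lemma~\ref{le:hconst} (with $\underline h$ and $\liminf$ in place of $\bar h$ and $\limsup$ for the lower half).

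To obtain the concentration, I would work with the configuration model associated to $d_n$: attach $d_n(v)$ half-edges to vertex $v$, let $m$ be a uniform random matching on the resulting set of $\sum_v d_n(v)$ half-edges, and write $G(m)$ for the associated multigraph. If $m$ and $m'$ differ by a single switch, then $G(m)$ and $G(m')$ agree outside four edges. As in the proof of Lemma~\ref{le:concentration}, for any $f \in M^n$ one has $\distr_{G(m),v}(f)_r \neq \distr_{G(m'),v}(f)_r$ only when $v$ lies at graph-distance at most $r$ from one of those four edges. The bounded-degree hypothesis \eqref{eq:bddegree} bounds the number of such vertices by $4\Delta(\Delta-1)^{r-1}$, so the change in the corresponding empirical measures is $O(\theta/n)$ with $\theta$ depending only on $(\Delta, r)$, and consequently $|\ln Z_{G(m)}(\beta) - \ln Z_{G(m')}(\beta)| \leq \beta \theta$. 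The Hoeffding-type concentration for uniform random matchings \cite[Theorem 2.19]{MR1725006} then yields the analog of Lemma~\ref{le:concentration} verbatim, with a constant depending on $(\Delta, r)$ in place of $(d, r)$.

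The last step is to transfer the concentration from the configuration model to the uniform law on $\cG_n(d_n)$. Under \eqref{eq:bddegree}, the probability that $G(m)$ is simple is bounded below by a positive constant (see \cite{remcobook}); since the uniform law on $\cG_n(d_n)$ is the conditional law of $G(m)$ given simplicity, the concentration bound transfers at the cost of inflating the prefactor $C$ by a bounded multiplicative constant. The main (minor) obstacle I anticipate is precisely tracking this contiguity in a non-regular setting: once the bounded-degree hypothesis is exploited to control both the switching step (bounded local change) and the simplicity probability (bounded away from zero), the argument of Theorem~\ref{th:concmain} runs mutatis mutandis and Lemma~\ref{le:hconstT} is immediate.
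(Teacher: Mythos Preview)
Your proposal is correct and is exactly the argument the paper has in mind: the paper's proof consists of the single sentence ``The proof of Theorem~\ref{th:concmain} works verbatim under the assumption \eqref{eq:bddegree},'' and you have simply unpacked what ``verbatim'' entails here, namely replacing $d$ by $\Delta$ in the switch bound of Lemma~\ref{le:concentration} and checking that contiguity with the configuration model survives for bounded degree sequences.
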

\begin{proof}
The proof of Theorem \ref{th:concmain} works verbatim under the assumption \eqref{eq:bddegree}. 
\end{proof}

We define $\bar h (\mu,r)$ and $\underline h (\mu,r)$ as the common value of $\bar h(\mu,r,\alpha)$ and $\underline h (\mu,r,\alpha)$. The upper and lower sofic entropies $\bar h (\mu)$ and $\underline h (\mu)$ are the limits in $r$ of $\bar h (\mu,r)$ and $\underline h (\mu,r)$. Exactly as in Lemma \ref{le:typent}, as a corollary of Lemma \ref{le:hconstT}, we obtain the following claim. 
\begin{lemma}\label{le:typentT}
Let $\mu \in \cI_\pi(M)$. We have $\bar h(\mu) \geq 0$ (resp. $\underline h(\mu) \geq 0$) if and only if $\mu$ is weakly (resp. strongly) typical. 
\end{lemma}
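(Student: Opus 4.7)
The plan is to reproduce verbatim the short argument that justified Lemma \ref{le:typent} in the $d$-regular setting (given in the paragraph preceding its statement). That argument uses only three ingredients: (i) the concentration fact that $\bar h(\mu,r,\alpha)$ and $\underline h(\mu,r,\alpha)$ do not depend on $\alpha\in(0,1)$, which is provided in the UGW setting by Lemma \ref{le:hconstT}; (ii) the monotonicity of $\bar h(\mu,r)$ and $\underline h(\mu,r)$ in $r$, which is immediate from the inclusion $\mathcal F_{G}(\mu,r+1,\veps)\subseteq \mathcal F_{G}(\mu,r,\veps)$ and does not depend on the graph ensemble; and (iii) the property that for $\nu,\mu\in\cP(\cGb_M)$ one has $\dd(\nu_r,\mu_r)\to \dd(\nu,\mu)$ as $r\to\infty$, which is purely a metric property of $\cP(\cGb_M)$.

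First I would observe that $H_{G_n}(\mu,r,\veps) \geq 0$ is equivalent to $\mathcal F_{G_n}(\mu,r,\veps)\neq\emptyset$, that is, to the existence of some $f\in M^n$ with $\dd(\distr_{G_n}(f)_r,\mu_r)\leq \veps$. Plugging this into the definition of $h_n(\mu,r,\veps,\alpha)$ as a median, using Lemma \ref{le:hconstT} to eliminate $\alpha$, and letting $\alpha\to 1$, one gets that $\bar h(\mu)\geq 0$ is equivalent to the statement that for every integer $r\geq 1$ and every $\veps>0$,
\[
\limsup_{n\to\infty}\dP\bigl(\exists f\in M^n : \dd(\distr_{G_n}(f)_r,\mu_r)\leq \veps\bigr)=1,
\]
and the analogous statement with $\liminf$ in place of $\limsup$ characterises $\underline h(\mu)\geq 0$.

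It then remains to bridge this $r$-local statement and the global condition defining typicality. I would use that the restriction map $\nu\mapsto\nu_r$ is continuous on $\cP(\cGb_M)$ and that (for a natural metrisation, e.g.\ $\dd(\nu,\mu)=\sum_{k\geq 0}2^{-k}\dd_k(\nu_k,\mu_k)$ with each $\dd_k\leq 1$) one has the bounds $\dd(\nu,\mu)\leq 2\,\dd(\nu_r,\mu_r)+2^{-r+1}$ uniformly in $\nu$, and $\dd(\nu_r,\mu_r)\leq c_r\,\dd(\nu,\mu)$ for some constant $c_r$. Given $\veps>0$, one then picks $r$ large enough that $2^{-r+1}\leq \veps/2$ and $\veps'=\veps/4$: any $f$ achieving $\dd(\distr_{G_n}(f)_r,\mu_r)\leq \veps'$ achieves $\dd(\distr_{G_n}(f),\mu)\leq \veps$. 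Combined with the displayed equivalence, this shows that $\bar h(\mu)\geq 0$ implies weak typicality; conversely, if $\mu$ is weakly typical then for every $r$ and every $\veps$, picking $\veps'$ small enough that $c_r\veps'\leq \veps$, a coloring realising the global inequality at tolerance $\veps'$ realises the $r$-local inequality at tolerance $\veps$, giving $\bar h(\mu,r)\geq 0$ for every $r$ and hence $\bar h(\mu)\geq 0$. The strong case is identical with $\limsup$ replaced by $\liminf$ throughout.

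There is no substantial obstacle: the argument is a routine transcription of the reasoning used for Lemma \ref{le:typent}, and the one non-trivial ingredient beyond the metric structure of $\cP(\cGb_M)$, namely the $\alpha$-independence of the entropies, has already been provided in the UGW setting by Lemma \ref{le:hconstT}.
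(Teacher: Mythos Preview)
Your proposal is correct and follows essentially the same approach as the paper. The paper itself does not give a separate proof of Lemma \ref{le:typentT}: it simply states that the claim follows ``exactly as in Lemma \ref{le:typent}, as a corollary of Lemma \ref{le:hconstT}'', and the argument for Lemma \ref{le:typent} is the paragraph you are reproducing (take $\alpha\to 1$ via the $\alpha$-independence, and use that $\dd(\nu_r,\mu_r)\to\dd(\nu,\mu)$). Your write-up is in fact more explicit than what the paper provides.
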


\subsection{Annealed entropy}
In this broader setting, the annealed entropy is defined as follows. Let $(T,o)$ be a  randomly labeled rooted unimodular tree and $X$ an invariant coloring of $T$ with $(T,X)$ having law $\mu$. The degree of a vertex $v$ of $T$ is denoted by $\deg(v)$. We assume that $d = \dE \deg(o) > 0$. As above, if $r \geq 0$ is an integer and $S$ is a subset of the vertices of $T$, $B_r (S)$  is the subset of vertices of $T$ at distance at most $r$ from $S$.  For $r \geq 1$, we set $S_r = B_r(o)$ and, if $\deg(o)\geq 1$, we set $E_r = B_{r-1} (\{o,1\})$ (since $T$ is randomly labeled, the neighbors of the root are indexed by $(1,\ldots,\deg(o))$).

We denote by $X_{S_r}$ the colored tree $(T,X)$ restricted to $S_r$: by construction,  $X_{S_r}$ has law $\mu_r$.  We also need to define the law of $X$ restricted to $E_r$ but this requires a biasing of the tree $T$. This is done as follows. A (directed) edge-rooted graph is defined as a pair $(G,\rho)$ formed by a connected graph $G$ and a distinguished directed edge $\rho = (u,v)$ (that is, $\{u,v\}$ is an edge of the graph). 
Now,  we denote by $\vec \mu$ the law on colored edge-rooted trees defined by:
\begin{equation}\label{eq:vecmu}
\vec \mu ( \cdot)    = \frac 1 d \dE \SBRA{ \deg(o) \IND( (T,X, (o,1)) \in \cdot } ,
\end{equation}
where $(T,X)$ has law $\mu$ and $d = \dE \deg(o)$. Note that under the probability measure $\vec \mu$, $o$ has at least degree $1$ and thus $\{o,1\}$ is an edge of the tree. We denote by $(\vec T, \vec X, \rho)$, with $\rho = (o,1)$ a random variable with law $\vec \mu$.  It is easy to check that Equation \eqref{eq:defunimod} implies that $\vec \mu$ is invariant by switching the two sides of the oriented edge. Moreover, if $T$ has law $\UGW(\pi)$ then $\vec T$  is given by two independent Galton-Watson trees with offspring distribution $\hat \pi$ whose roots are connected by the root-edge, see \cite[Example 1.1]{aldlyo}. In particular, if $\pi$ is a Dirac mass at $d$, then $T =  \vec T$.

We denote by $\vec X_{E_r}$ the colored tree $(\vec T, \vec X)$ restricted to $E_r$. The law of $\vec X_{E_r}$ is $\vec \mu_r$, the restriction of $\vec \mu$ to $E_r$. We observe that $\vec \mu_r$ depends on $\mu$ only through its marginal $\mu_r$.

Now, if $X$ is an invariant coloring of $T$ with law $\mu \in \cI_\pi (M)$ and $r \geq 1$ is an integer, we set 
\begin{equation}\label{eq:defSigmaT}
\Sigma_r (X) = \Sigma_r (\mu) = H (X_{S_r}) - \frac d 2 H (\vec X_{E_r}) - H(\pi).
\end{equation}
See Remark \ref{rk:defSigmaT} for an alternative expression which is arguably more natural. Thanks to assumption \eqref{eq:bddegree} it is immediate that the above entropies are finite as soon as $M$ is finite. Note also that $\Sigma_r (\mu) $ depends on $\mu$ only through $\mu_r$. We will check in Lemma \ref{th:minSigmaT} below that $\Sigma_r(\mu)$ is non-increasing in $r$. We may thus define
$$
\Sigma(\mu) = \lim_{r \to \infty} \Sigma_r(\mu).
$$
The quantities $\Sigma_r(\mu)$ and $\Sigma(\mu)$ are the annealed entropies of $\mu_r$ and $\mu$. The following lemma generalizes Lemma \ref{th:1stmomentT}.

\begin{theorem}\label{th:1stmomentT}
For any $\mu \in \cI_\pi(M)$ and integer $r \geq 1$, we have
$$
\bar h (\mu,r ) \leq \Sigma_r(\mu)  \quad \hbox{ and }\quad  \bar h (\mu) \leq \Sigma(\mu) . 
$$
\end{theorem}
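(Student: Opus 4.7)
The plan is to adapt the proof of Theorem \ref{th:1stmoment} verbatim. That argument has two ingredients: Markov's inequality to pass from an expectation to a probability bound, and a combinatorial characterization of the annealed entropy $\Sigma_r(\mu)$ as the growth rate of $\dE |\mathcal F_{G_n}(\mu,r,\veps)|$ (an analog of Theorem \ref{th:combS}). Only the combinatorial ingredient requires new work; once it is in place, Markov's inequality combined with Lemma \ref{le:hconstT} yields the conclusion exactly as in Section \ref{sec:proofsMain}.

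Concretely, I would first prove the analog of Theorem \ref{th:combS} in the present setting: for $\mu \in \cI_\pi(M)$ and integer $r \geq 1$,
\begin{equation*}
\lim_{\veps \to 0} \limsup_{n \to \infty} \frac{1}{n} \log \dE |\mathcal F_{G_n}(\mu,r,\veps)| = \Sigma_r(\mu),
\end{equation*}
and the same with $\liminf$, where $G_n$ is uniform on $\cG_n(d_n)$. The natural route is through the configuration model with degree sequence $d_n$: under \eqref{eq:bddegree}, the uniform simple graph and the configuration model are contiguous (see \cite{remcobook}), so it suffices to count colored pairings. Two enumerations are then needed. The first gives an asymptotic for $\log |\cG_n(d_n)|$ analogous to \eqref{eq:fBC}, with the classical $\tfrac{d}{2} \log n$ term and a combinatorial part controlled by $n^{-1} \sum_v \log d_n(v)!$. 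The second counts colored pairings whose $r$-neighborhood statistics are $\veps$-close to $\mu_r$; this can be carried out following the Delgosha-Anantharam framework \cite{DA19}, extended to the colored heterogeneous-degree setting. The three terms of \eqref{eq:defSigmaT} should emerge from this enumeration: $H(X_{S_r})$ from the colored rooted neighborhood around each vertex, $\tfrac{d}{2} H(\vec X_{E_r})$ from pairing half-edges (producing a directed-edge statistic), and $-H(\pi)$ because the degrees are prescribed by $d_n$, so the entropic contribution of $\deg(o)$ already present in $H(X_{S_r})$ must be subtracted to avoid double-counting. A labeling-symmetry argument analogous to Lemma \ref{le:exchH} is required to pass from unlabeled to randomly labeled statistics. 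This step is the main obstacle: the careful bookkeeping of label symmetries and of the contiguity correction, and in particular the isolation of the $-H(\pi)$ term, which has no counterpart in the regular case.

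With the combinatorial formula in hand, the remainder is routine. For any real $h$, Markov's inequality gives
\begin{equation*}
\frac{1}{n} \log \dP\PAR{H_{G_n}(\mu,r,\veps) \geq h} \leq \Sigma_n(\mu,r,\veps) - h,
\end{equation*}
where $\Sigma_n(\mu,r,\veps) = n^{-1} \log \dE |\mathcal F_{G_n}(\mu,r,\veps)|$. Taking $\limsup_{n \to \infty}$ and then $\veps \to 0$, the right-hand side is at most $\Sigma_r(\mu) - h + \eta$ for any $\eta > 0$. If $h > \Sigma_r(\mu)$, this is strictly negative, so $\dP(H_{G_n}(\mu,r,\veps) \geq h) \to 0$ for all sufficiently small $\veps$. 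By Lemma \ref{le:hconstT}, this gives $\bar h(\mu,r) \leq h$ for every $h > \Sigma_r(\mu)$, hence $\bar h(\mu,r) \leq \Sigma_r(\mu)$; passing to $r \to \infty$ yields $\bar h(\mu) \leq \Sigma(\mu)$.
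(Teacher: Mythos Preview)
Your proposal is correct and follows essentially the same approach as the paper: the paper isolates the analog of Theorem \ref{th:combS} as a separate result (Theorem \ref{th:combST}), proves it by invoking \cite{MR3405616,DA19} for the enumeration of colored graphs with given degree sequence and then carrying out the labeled/unlabeled bookkeeping to identify the three terms of \eqref{eq:defSigmaT} (including the $-H(\pi)$ correction), and finally notes that the Markov-inequality argument of Section \ref{sec:proofsMain} goes through verbatim with Theorem \ref{th:combST} in place of Theorem \ref{th:combS}. Your sketch of where each term of $\Sigma_r(\mu)$ originates matches the paper's computation.
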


There is also an analog of Theorem \ref{th:2ndmoment}. 

\begin{theorem}\label{th:2ndmomentT}
Let $\mu \in \cI_\pi(M)$.  For any  integer $r \geq 1$, if all invariant couplings $\nu$ of $\mu$ and $\mu$, we have $\Sigma_r(\nu) \leq 2\Sigma_r(\mu)$ then $\underline h (\mu,r) = \bar h (\mu,r)  = \Sigma_r(\mu).$
In particular, if the above condition holds for an increasing sequence of integers $(r_k)_{k \geq 1}$ then 
$
\underline h (\mu) = \bar h (\mu)  = \Sigma(\mu).
$
\end{theorem}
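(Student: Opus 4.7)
The plan is to mirror the proof of Theorem \ref{th:2ndmoment} step by step, replacing the uniform random $d$-regular graph by a uniform random $G_n$ on $\cG_n(d_n)$, and replacing invariant couplings in $\cI_d(M^2)$ by invariant couplings in $\cI_\pi(M^2)$. Since Theorem \ref{th:1stmomentT} already gives $\bar h(\mu,r) \leq \Sigma_r(\mu)$, and since $\underline h(\mu,r) \leq \bar h(\mu,r)$ by definition, it suffices to establish the matching lower bound $\underline h(\mu,r) \geq \Sigma_r(\mu)$. The final statement about $\Sigma(\mu)$ then follows by applying the single-radius conclusion to each $r_k$ and letting $k \to \infty$, using that $\Sigma_r(\mu)$, $\bar h(\mu,r)$ and $\underline h(\mu,r)$ are non-increasing in $r$ with limits $\Sigma(\mu)$, $\bar h(\mu)$, $\underline h(\mu)$.

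The first preparatory input is a combinatorial characterization of the annealed entropy analogous to Theorem \ref{th:combS}: setting
\begin{equation*}
\Sigma_n(\mu,r,\veps) = \frac 1 n \log \dE |\mathcal F_{G_n}(\mu, r, \veps)|,
\end{equation*}
I would establish
\begin{equation*}
\lim_{\veps \to 0} \lim_{n\to\infty} \Sigma_n(\mu,r,\veps) = \Sigma_r(\mu).
\end{equation*}
This identity should follow from the work already done earlier in Section \ref{sec:extended} for Theorem \ref{th:1stmomentT}; it rests on the asymptotic enumeration of graphs with degree sequence $d_n$ (an analog of the Bender--Canfield formula \eqref{eq:fBC}) together with a refinement of the Delgosha--Anantharam arguments from Section \ref{subsec:SigmaC}. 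The correction $-H(\pi)$ in \eqref{eq:defSigmaT} precisely records the entropy that is ``lost'' because the degree sequence is prescribed rather than random.

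The second step is the second moment bound. Paley--Zygmund gives
\begin{equation*}
\dP\bigl(|\mathcal F_{G_n}(\mu,r,\veps)| \geq e^{-1}\,\dE|\mathcal F_{G_n}(\mu,r,\veps)|\bigr) \geq (1-e^{-1})^2 \frac{\bigl(\dE|\mathcal F_{G_n}(\mu,r,\veps)|\bigr)^2}{\dE|\mathcal F_{G_n}(\mu,r,\veps)|^2}.
\end{equation*}
To control the second moment, I pick a finite $\veps$-net $\mathcal N_\veps$ in $\cI_\pi(M^2)$ for the set of invariant couplings of $\mu$ with itself, using total variation between the $r$-restrictions. A pair $(f_1,f_2) \in \mathcal F_G(\mu,r,\veps)^2$ is a single $M^2$-valued coloring of $G$ whose local statistics lie close to some coupling of $\mu_r$ with itself; pushing both marginals back onto $\mu_r$ exactly (at the cost of at most $2\veps$ in total variation) yields, exactly as in the regular case,
\begin{equation*}
|\mathcal F_G(\mu,r,\veps)|^2 \leq \sum_{\nu \in \mathcal N_\veps} |\mathcal F_G(\nu, r, 3\veps)|.
\end{equation*}
Taking expectations, applying the combinatorial characterization to each $\Sigma_n(\nu,r,3\veps)$, and invoking the hypothesis $\Sigma_r(\nu) \leq 2\Sigma_r(\mu)$ for every invariant coupling $\nu$ of $\mu$ with itself produces
\begin{equation*}
\limsup_{n\to\infty} \frac 1 n \log \dE |\mathcal F_{G_n}(\mu,r,\veps)|^2 \leq 2\Sigma_r(\mu) + \delta'(\veps)
\end{equation*}
with $\delta'(\veps) \to 0$. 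Combined with Paley--Zygmund and the median-to-probability transfer of Theorem \ref{th:concmain} (valid under the bounded-degree assumption \eqref{eq:bddegree}, as already exploited in Lemma \ref{le:hconstT}), this gives the target inequality $\underline h(\mu,r) \geq \Sigma_r(\mu)$.

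The main obstacle is the combinatorial characterization of $\Sigma_r(\mu)$ in the UGW setting: a model with a fixed but non-constant degree sequence is noticeably more delicate than the $d$-regular one, and one must carefully track how the prescribed sequence enters the enumeration in order to recover the formula \eqref{eq:defSigmaT} with its $-H(\pi)$ term. Once this input is secured (together with the concentration of Theorem \ref{th:concmain} under \eqref{eq:bddegree}), the remainder is a direct transcription of the proof of Theorem \ref{th:2ndmoment}: the only structural change is that couplings must be sought in $\cI_\pi(M^2)$ rather than in $\cI_d(M^2)$, which causes no new difficulty since both spaces are compact and admit finite $\veps$-nets in total variation on any finite radius.
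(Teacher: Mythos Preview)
Your proposal is correct and follows essentially the same approach as the paper: the authors state that the proof is a verbatim repetition of the argument for Theorem~\ref{th:2ndmoment}, invoking Theorem~\ref{th:combST} (the UGW combinatorial characterization you identify as the main input) in place of Theorem~\ref{th:combS}, together with the extension of Theorem~\ref{th:concmain} to the bounded-degree setting noted in Lemma~\ref{le:hconstT}. Your identification of the combinatorial characterization as the only genuinely new ingredient, and of the $-H(\pi)$ term as arising from the prescribed degree sequence, is accurate.
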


\begin{remark}\label{rk:defSigmaT}
The annealed entropy is also given by the formula:
$$
\Sigma_r (X) = H (X_{S_r}|T_{S_r}) - \frac d 2 H (\vec X_{E_r}|\vec T_{E_r}),
$$
where  $H (X |Y)  = H(X,Y) - H(Y)$ is the relative entropy. Indeed $\vec T$ is the union of two independent copies of $T'$, a Galton-Watson tree with offspring distribution $\hat \pi$, while $T$ is the union of $N$ independent copies of $T'$ and with $N$ independent with distribution $\pi$. It follows that, $H ( \vec T_{E_r}) = 2 H(T'_{S_{r-1}})$ and $H( T_{S_r} ) = H (N) + d H(T'_{S_{r-1}})$ (from \eqref{eq:propRE}). In particular, $H(T_{S_r} ) - (d/2) H (\vec T_{E_r}) = H(N) = H (\pi)$.
\end{remark}

\subsection{Markov processes}

There is an extension of Theorem \ref{th:edgeMarkov} and Theorem \ref{th:vertexMarkov} in our extended setting. The previous definitions of Markov processes carry over when conditioned on the random tree. More precisely, we use the following definitions.

\begin{definition}[Markov process]\label{def:markovT} Let $(T,X)$ be a random coloring of a random tree $T$ on a finite set $M$ with law $\mu$.  For integer $r \geq 1$, $X$ or $\mu$ is $r$-Markov if conditioned on $T$ on $B_{r-1}(e)$ and on the value at $B_{r-1}(e)$, the $(r-1)$-neighborhood of an edge $e$, the processes on the left and right subtrees of $e$ are independent. Similarly, $X$ or $\mu$ is vertex-Markov if conditioned on $T$ and on the value at a vertex, the processes on the pending subtrees of that vertex are independent. 
 \end{definition}

For integer $r \geq 1$, let $\cI_{\pi,r}(M)$ denote the set of laws $\mu$ of coloring $(T',X')$ on $M$ which are randomly labeled,  such that $T'$ has law $\UGW(\pi)_r$ (the law of the restriction of $T$ to $S_r$) and  such that $\vec \mu$ as defined above is invariant  by switching the two sides of the oriented edge. If $\mu \in \cI_{\pi}(M)$ then $\mu_r \in  \cI_{\pi}(M)$. Conversely, we have the following (see \cite[Proposition 1.1]{MR3405616}):

\begin{lemma}Let $r \geq 1$ integer and $p \in\cI_{\pi,r} (M)$. Then there is a unique $r$-Markov process $\mu(p)\in\mathcal{I}_\pi(M)$  such that the marginal of $\mu(p)$ on $S_r$ is equal to $\mu$.
\end{lemma}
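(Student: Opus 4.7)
The plan is to construct $\mu(p)$ by a recursive tree-indexed Markov chain construction, then verify unimodularity and uniqueness. The statement should read ``equal to $p$'' in place of ``equal to $\mu$''; this is the generalization to UGW trees of the standard existence/uniqueness of edge-Markov extensions.

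For existence, I would first sample $(T_{S_r}, X_{S_r})$ according to $p$; this is meaningful because, by definition of $\cI_{\pi,r}(M)$, the marginal of $p$ on the tree structure is $\UGW(\pi)_r$. The key object is a conditional kernel $K$ that extends a coloring across an edge: given $p \in \cI_{\pi,r}(M)$, its size-biased edge-rooted version $\vec p$ (analogous to \eqref{eq:vecmu}) is invariant under the switch of the two sides of the root edge, so the conditional distribution under $\vec p$ of the $(r-1)$-neighborhood on one side of the root edge given the $(r-1)$-neighborhood on the other side is a well-defined kernel. For each vertex $v$ on the boundary sphere of $S_r$, let $u$ be its parent and $e = \{u,v\}$; apply $K$ (with the appropriate rerooting) to extend the coloring and the tree from the $u$-side of $B_{r-1}(e)$ to the $v$-side, where newly created offspring counts are sampled from $\hat\pi$. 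Iterating this outward edge by edge produces a joint law on colored trees $\mu(p)$ whose marginal on $S_r$ is $p$ and which is $r$-Markov by construction.

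For unimodularity, one checks \eqref{eq:defunimod} for $(T, X)$ sampled from $\mu(p)$. The crucial input is that $\vec p$ is symmetric under swapping the two sides of the root edge; combined with the unimodularity of the underlying $\UGW(\pi)$, this gives a mass-transport identity for the extension kernel $K$. Telescoping along a geodesic from $o$ to an arbitrary $v$ reduces the verification to $|V(\text{geodesic})|-1$ applications of this edge-symmetry. For uniqueness, any $r$-Markov $\mu'\in\cI_\pi(M)$ with $\mu'_r = p$ has, for every edge $e$, the conditional independence of the two sides of $e$ given the values on $B_{r-1}(e)$. Invariance of $\mu'$ forces the marginal on $B_{r-1}(e)$ (after rerooting) to agree with the corresponding marginal of $p$, so that the conditional kernel across $e$ matches $K$. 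Covering any finite connected region by overlapping $(r-1)$-neighborhoods of edges and applying the Markov splitting shows $\mu'$ agrees with $\mu(p)$ on every finite window, hence $\mu' = \mu(p)$.

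The main obstacle is the unimodularity verification, because the recursive extension is not a priori symmetric in the sense demanded by \eqref{eq:defunimod}; it is only symmetric locally, at each edge, and one must propagate this local edge-symmetry (encoded in $\vec p$) to the global mass-transport principle. The cleanest route is to reduce the general $r$ case to the $r=1$ case handled in \cite[Proposition 1.1]{MR3405616}: encode coloring-plus-tree-structure on each $(r-1)$-ball around an edge as a single new ``color'' in an enlarged finite alphabet $\widetilde M$, so that an $r$-Markov process on $T$ becomes an edge-Markov process on a derived tree with the same UGW structure, and the conditions defining $\cI_{\pi,r}(M)$ become exactly the conditions defining $\cI_{\pi,1}(\widetilde M)$ for the derived data.
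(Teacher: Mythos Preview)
The paper does not give a proof of this lemma at all; it simply states the result with the parenthetical ``(see \cite[Proposition 1.1]{MR3405616})'' and moves on. Your proposal therefore goes well beyond what the paper provides: you outline the actual recursive construction, the unimodularity check via edge-symmetry of $\vec p$, and the uniqueness argument by overlapping windows. These are all sound, and you correctly catch the typo (``equal to $\mu$'' should be ``equal to $p$''). Your final paragraph, which reduces the general $r$ to $r=1$ by enlarging the alphabet and then invoking \cite[Proposition 1.1]{MR3405616}, is in fact exactly the route implicit in the paper's bare citation, so in that sense you land on the same approach the paper points to --- you just spell out what the paper leaves to the reader.
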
 

If $p \in\cI_{\pi,r} (M)$, we define  $\Sigma (p) = \Sigma_r(\mu(p))$ as in Equation \eqref{eq:defSigmaT}.  The following theorem is an extension of Theorem \ref{th:edgeMarkov}.

\begin{theorem}\label{th:edgeMarkovT}
Let $r \geq 1$ be an integer and $p \in\cI_{\pi,r} (M)$. If for all couplings $q \in\cI_{\pi,r} (M^2)$ of $p$ and $p$,  we have $\Sigma(q) \leq 2 \Sigma(p)$  then $\underline h(\mu(p)) = \bar h(\mu(p))= \Sigma(p)$ and $\mu(p)$ is strongly typical.
\end{theorem}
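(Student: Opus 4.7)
The plan is to mirror the proof of Theorem \ref{th:edgeMarkov} in the unimodular Galton--Watson setting. The upper bound $\bar h(\mu(p)) \leq \Sigma(p)$ will follow from Theorem \ref{th:1stmomentT} once one knows that $\Sigma_t(\mu(p)) = \Sigma(p)$ for every $t \geq r$, which is the equality case of the monotonicity $\Sigma_{s+1} \leq \Sigma_s$ applied to the $r$-Markov measure $\mu(p)$. For the lower bound, I plan to apply Theorem \ref{th:2ndmomentT} at each level $t \geq r$ and then take $t \to \infty$.

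The key additional ingredient is therefore the UGW analogue of Lemma \ref{th:minSigma}: for every $\mu \in \cI_\pi(M)$ and every $r \geq 1$,
\[
\Sigma_{r+1}(\mu) \leq \Sigma_r(\mu),
\]
with equality iff $\mu_{r+1}$ is the restriction of an $r$-Markov process in the sense of Definition \ref{def:markovT}. I would prove this using the conditional-entropy reformulation from Remark \ref{rk:defSigmaT} and repeat the chain-rule and conditional-independence estimates from the proof of Lemma \ref{th:minSigma} inside the conditional law given the random tree $T$. The unimodularity of $(T,X,o)$ together with the size-biased definition \eqref{eq:vecmu} of $\vec \mu$ supply exactly the symmetries---notably the edge-switching invariance of $\vec \mu$---that replace the automorphism invariance of $T_d$ used in the deterministic case.

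Granting this monotonicity, the rest is formal. Fix $t \geq r$ and an invariant coupling $\nu \in \cI_{\pi,t}(M^2)$ of $\mu(p)_t$ with itself. Its restriction $\nu_r$ to $S_r$ is an invariant coupling of $p$ with $p$, so by hypothesis $\Sigma(\nu_r) \leq 2 \Sigma(p)$. Applying the UGW analogue of Lemma \ref{th:minSigma} to the $t$-Markov extension $\mu(\nu)$, and using that $\Sigma_r$ depends on its argument only through the $S_r$-marginal,
\[
\Sigma(\nu) = \Sigma_t(\mu(\nu)) \leq \Sigma_r(\mu(\nu)) = \Sigma(\nu_r) \leq 2 \Sigma(p) = 2 \Sigma_t(\mu(p)),
\]
so Theorem \ref{th:2ndmomentT} yields $\underline h(\mu(p),t) = \bar h(\mu(p),t) = \Sigma(p)$. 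Letting $t \to \infty$ and combining with the upper bound gives $\underline h(\mu(p)) = \bar h(\mu(p)) = \Sigma(p)$, and strong typicality follows from Lemma \ref{le:typentT}.

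The hard part will be the UGW version of Lemma \ref{th:minSigma}. Its skeleton is the same as in the $d$-regular case, but one must handle random degrees at every vertex and carefully balance the $d/2$ factor in front of $H(\vec X_{E_r} \mid \vec T_{E_r})$ against the size-biased offspring distribution $\hat \pi$ when decomposing $H(X_{S_r} \mid T_{S_r})$ into contributions from the child-subtrees of the root. Recovering the $r$-Markov property in the equality case requires the various conditional-independence identities to hold almost surely in $T$, and this is where unimodularity enters most essentially.
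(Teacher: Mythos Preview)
Your argument is correct and follows exactly the route the paper takes: reduce to Theorem \ref{th:2ndmomentT} at each radius $t\geq r$ by using the monotonicity $\Sigma_{t}\leq\Sigma_{r}$ and the equality $\Sigma_t(\mu(p))=\Sigma(p)$ for the $r$-Markov extension, then let $t\to\infty$ and invoke Lemma \ref{le:typentT}. The only remark is that the ``UGW analogue of Lemma \ref{th:minSigma}'' you plan to prove is already stated and proved in the paper as Lemma \ref{th:minSigmaT}, so you can cite it directly rather than re-establish it; the paper's own proof of Theorem \ref{th:edgeMarkovT} does precisely that, replacing Lemma \ref{th:minSigma} by Lemma \ref{th:minSigmaT} and Theorem \ref{th:combS} by Theorem \ref{th:combST} in the argument of Section \ref{sec:proofsMain}.
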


There is a version of this theorem for vertex-Markov processes.  As above, let $\cI_{e}(M)$ denote the set of probability measures on $M^{E_1}$ that are invariant by switching the two sides of the edge $\{o,1\}$. If $\mu \in \cI_{\mu}(M)$ then the restriction of $\vec \mu$ to $E_1$ is in $\cI_{e}(M)$. Conversely, if $ p \in \cI_{e}(M)$, there exists a unique vertex-Markov process $\mu(p)$ in $\cI_{\pi}(M)$ such that $\vec \mu$ restricted to $E_1$ is in $\cI_{e}(M)$. If $(T,X) \in \mathcal I_{\pi} (M)$, we define 
$$
\Sigma_e(X) =  \frac d 2 H (\vec X_{E_1}) - d H (\vec X_o) + H( X_o) - H(\pi).
$$
If $p \in \cI_e(M)$, we set $\Sigma(p) = \Sigma_e(\mu(p))$. The following theorem is an extension of Theorem \ref{th:vertexMarkov}.

\begin{theorem}\label{th:vertexMarkovT} 
Let $p \in\cI_e(M)$. If for all couplings $q \in\cI_e(M^2)$ of $p$ and $p$,  we have $\Sigma(q) \leq 2 \Sigma(p)$  then $\underline h(\mu(p)) = \bar h(\mu(p))= \Sigma(p)$ and $\mu(p)$ is strongly typical.
\end{theorem}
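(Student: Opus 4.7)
The plan is to reduce Theorem \ref{th:vertexMarkovT} to Theorem \ref{th:edgeMarkovT} following the same template by which Theorem \ref{th:vertexMarkov} was deduced from Theorem \ref{th:edgeMarkov}, via an extremal property of vertex-Markov processes that plays the role of Lemma \ref{le:staredgetoedgevertex}.

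The first step is to establish the UGW analog of Lemma \ref{le:staredgetoedgevertex}: for every $X \in \cI_\pi(M)$, $\Sigma_1(X) \leq \Sigma_e(X)$, with equality if and only if the restriction of the law of $X$ to $S_1$ is vertex-Markov. The proof mirrors that of Lemma \ref{le:staredgetoedgevertex} but must now be carried out under the random-tree law. Writing $N = \deg(o)$ and conditioning on $X_o$, I would decompose
\[
H(X_{S_1}) = H(X_o) + H(N \mid X_o) + H\bigl((X_1,\ldots,X_N) \bigm|\, N, X_o \bigr),
\]
then apply subadditivity together with the exchangeability of the offspring (coming from the random labeling of $T$) to bound the last term by $\dE[N \, H(X_1 \mid N, X_o)]$. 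The crucial step is the size-biasing/unimodularity identity
\[
\dE\bigl[N \, H(X_1 \mid N, X_o)\bigr] = d \, H_{\vec \mu}(\vec X_1 \mid \vec N, \vec X_o) \leq d \, H_{\vec \mu}(\vec X_1 \mid \vec X_o) = d\bigl[H(\vec X_{E_1}) - H(\vec X_o)\bigr],
\]
which holds because $\vec \mu$ biases by $\deg(o)/d$ and the conditional law of $X_1$ given $(N, X_o)$ under $\mu$ coincides with that of $\vec X_1$ given $(\vec N, \vec X_o)$ under $\vec \mu$. Reassembling and invoking the definitions of $\Sigma_1$ and $\Sigma_e$ yields the claimed inequality, with equality exactly when the subadditivity step is tight and $\vec X_1$ is conditionally independent of $\vec N$ given $\vec X_o$, both of which together characterize the vertex-Markov property.

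Granted the lemma of Step 1, the reduction to Theorem \ref{th:edgeMarkovT} is essentially formal and proceeds as in the proof of Theorem \ref{th:vertexMarkov}. Set $p' = \mu(p)_1 \in \cI_{\pi,1}(M)$, and note that $\mu(p') = \mu(p)$ since a vertex-Markov process is in particular $1$-Markov (edge-Markov). Given an invariant coupling $q' \in \cI_{\pi,1}(M^2)$ of $p'$ and $p'$, let $q \in \cI_e(M^2)$ be the coupling of $p$ and $p$ obtained by restricting $\vec{\mu(q')}$ to $E_1$. The Step 1 lemma applied to $\mu(q')$, together with the fact that $\Sigma_e$ depends only on the restriction of $\vec \mu$ to $E_1$, gives
\[
\Sigma(q') = \Sigma_1(\mu(q')) \leq \Sigma_e(\mu(q')) = \Sigma_e(\mu(q)) = \Sigma(q).
\]
The hypothesis $\Sigma(q) \leq 2\Sigma(p)$, combined with the equality case of Step 1 applied to the vertex-Markov process $\mu(p)$, which gives $\Sigma(p') = \Sigma_1(\mu(p)) = \Sigma_e(\mu(p)) = \Sigma(p)$, yields $\Sigma(q') \leq 2\Sigma(p')$. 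This is exactly the hypothesis of Theorem \ref{th:edgeMarkovT} applied at radius $r=1$ to $p'$, from which $\underline h(\mu(p)) = \bar h(\mu(p)) = \Sigma(p') = \Sigma(p)$ and the strong typicality of $\mu(p)$ follow.

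The main obstacle is Step 1. The $d$-regular case had no random root degree to track, whereas here the tree entropy $H(\pi)$, the coupling of $N$ with $X_o$, and the distinction between $X_o$ under $\mu$ and $\vec X_o$ under $\vec \mu$ must all be threaded carefully through the entropy decomposition. The central technical device is the mass-transport identity $\dE[\deg(o)\,g(o,1)] = d\,\dE_{\vec\mu}[g(\vec T, \vec X, o, 1)]$ coming from unimodularity; without it, there is no bridge between $\mu$-statistics of the root's neighborhood and $\vec \mu$-statistics of the root edge. Getting the bookkeeping of $H(\pi)$ and of the mutual information between $N$ and $X_o$ to align properly with the definitions of $\Sigma_1$ and $\Sigma_e$ is the delicate new ingredient relative to the regular case.
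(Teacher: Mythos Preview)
Your overall strategy is exactly the paper's: your Step~1 inequality $\Sigma_1(X)\le\Sigma_e(X)$ is precisely the second half of Lemma~\ref{th:minSigmaT}, and your Step~2 reduction to Theorem~\ref{th:edgeMarkovT} is the verbatim transposition of the proof of Theorem~\ref{th:vertexMarkov} that the paper invokes. So at the level of architecture there is nothing to add.

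There is, however, a concrete error in Step~2. You assert that ``$\Sigma_e$ depends only on the restriction of $\vec\mu$ to $E_1$''. In the Galton--Watson setting this is false: in
\[
\Sigma_e(X)=\tfrac d2\,H(\vec X_{E_1})-d\,H(\vec X_o)+H(X_o)-H(\pi),
\]
the term $H(X_o)$ is computed under $\mu$, not $\vec\mu$, and it is \emph{not} determined by $\vec\mu|_{E_1}$ when the root color and the root degree are correlated. For a coupling $q'\in\cI_{\pi,1}(M^2)$ of $p'$ with itself it can happen that each coordinate $(Y_i)_o$ is independent of $N$ (because each marginal equals $p'=\mu(p)_1$ and $\mu(p)$ is vertex-Markov) while the pair $Y_o=((Y_1)_o,(Y_2)_o)$ is not; then $H(Y_o)\ne H(\vec Y_o)$ and your chain $\Sigma_e(\mu(q'))=\Sigma_e(\mu(q))=\Sigma(q)$ breaks at the first equality.

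There is a related gap in your Step~1 sketch, which you partly anticipate. Carrying through your decomposition
\[
H(X_{S_1})=H(X_o)+H(N\mid X_o)+H\bigl((X_1,\dots,X_N)\mid N,X_o\bigr)
\]
together with the size-biasing identity and then dropping $\vec N$ gives only
\[
\Sigma_1(X)\ \le\ \Sigma_e(X)+H(N\mid X_o),
\]
not $\Sigma_1(X)\le\Sigma_e(X)$; the extra $H(N\mid X_o)$ does not cancel. The paper's proof of Lemma~\ref{th:minSigmaT} proceeds differently: it conditions on the root degree \emph{first} and works throughout with the degree-conditioned entropies $H_k(\cdot)$ (using \eqref{eq:condlaw} to pass between $\mu$ and $\vec\mu$), so that the degree entropy is absorbed symmetrically into the $H(\pi)$ terms present in both $\Sigma_1$ and $\Sigma_e$. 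You should follow that route rather than conditioning on $X_o$ first.
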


In the remainder of the paper, we explain the proofs of Theorem \ref{th:1stmomentT}, Theorem \ref{th:2ndmomentT}, Theorem \ref{th:edgeMarkovT} and Theorem \ref{th:vertexMarkovT}. The proofs are entirely similar to the proof of the corresponding results for invariant processes on $T_d$. We will only sketch the proof and explain the differences.

\subsection{Maximizers of the annealed entropy}

The following lemma is the exact analog of Lemma \ref{th:minSigma} and Lemma \ref{le:staredgetoedgevertex}.
\begin{lemma} \label{th:minSigmaT}
Let $X \in \cI_\pi(M)$ and $r \geq 1$. We have 
$$
\Sigma_{r+1}(X) \leq \Sigma_r (X),
$$
with equality if and only if $X_{S_{r+1}}$ is a $r$-Markov process on $S_{r+1}$. Moreover,
$$
\Sigma_1(X)  \leq \Sigma_e(X), 
$$
with equality if and only if $X_{S_1}$ is a vertex-Markov process on $S_1$. 
\end{lemma}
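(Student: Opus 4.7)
The plan is to mirror the proofs of Lemma \ref{th:minSigma} and Lemma \ref{le:staredgetoedgevertex}, working throughout with the conditional-entropy formulation from Remark \ref{rk:defSigmaT}:
\[
\Sigma_r(X) = H(X_{S_r}\mid T_{S_r}) - \frac{d}{2} H(\vec X_{E_r}\mid \vec T_{E_r}).
\]
This identity is the right starting point because $T_{S_r}$ is a union of independent size-biased Galton--Watson pieces attached at the root, and $\vec T_{E_r}$ is two such pieces glued at the edge, so the tree-only contribution cancels exactly modulo $H(\pi)$. With this in hand, all remaining identities involve only the \emph{colorings} given the tree, and the argument becomes algebraically parallel to the $d$-regular case.

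For the monotonicity $\Sigma_{r+1}(X)\leq \Sigma_r(X)$, I would first apply the relabeling trick used in the proof of Lemma \ref{th:minSigma}: the map $\Psi$ sending a vertex $v$ to the pair $(T_{S_{r-1}(v)},X_{S_{r-1}(v)})$ takes values in a \emph{finite} set thanks to the uniform degree bound \eqref{eq:bddegree}, and it sends $\mu\in\cI_\pi(M)$ to an element of $\cI_\pi(N)$ for an appropriate color set $N$, with the property $\Sigma_{t+r}(X)=\Sigma_{t+1}(\Psi(X))$ and, crucially, $r$-Markov processes go to edge-Markov processes. This reduces the problem to the case $r=1$: showing $\Sigma_2(X)\leq \Sigma_1(X)$.

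For the base case, I would apply the chain rule to $H(X_{S_2}\mid T_{S_2})$ and $H(\vec X_{E_2}\mid \vec T_{E_2})$, decomposing along the same sets $L, L_1,\ldots,L_N$ used in the proof of Lemma \ref{th:minSigma} (now with the random degree $N$ of the root replacing $d$). The two key inequalities are (i) the subadditivity bound for the joint entropy of the children's subtrees conditioned on $X_o$ and the tree, with equality iff they are conditionally independent, and (ii) the inequality $H(A\mid B,C)\leq H(A\mid B)$ applied to drop extra conditioning, with equality iff the relevant conditional independence holds. Translating between $\mu$-expectations over the $N$ neighbors of $o$ and $\vec\mu$-expectations uses the unimodularity identity $d\,\dE_{\vec\mu}[f(\vec X_o,\vec X_1,\vec T)] = \dE_\mu\bigl[\sum_{v\sim o} f(X_o,X_v,\mathrm{shift}_v T)\bigr]$, which cleanly converts the sum over a random number of children into the $(d/2)$-weighted $\vec\mu$-term. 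The same bookkeeping, applied one layer shallower, yields $\Sigma_1(X)\leq \Sigma_e(X)$: one decomposes $H(X_{S_1}\mid T_{S_1}) = H(X_o\mid N) + H(X_{U_1},\dots,X_{U_N}\mid X_o,N)$, bounds the second summand by $\dE_\mu[N\,H(X_{U_1}\mid X_o,N)]$, and rewrites the result using the degree-biasing identity above.

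The main obstacle I anticipate is keeping the random-degree bookkeeping consistent: the same number $d$ appears with one interpretation under $\mu$ (mean offspring of the root) and a different one under $\vec\mu$ (normalizing constant for the size-biasing), and the proof must ensure the $H(\pi)$ and $H(N\mid X_o)$ terms cancel through the correct use of the conditional formulation. Once the calculation is set up with Remark \ref{rk:defSigmaT}, this cancellation is automatic. Finally, the equality cases come from tracking when the subadditivity and drop-conditioning steps are tight: equality in $\Sigma_{r+1}=\Sigma_r$ forces the $(d-1)$-ary subtrees rooted at the grandchildren of $o$ to be conditionally independent given the $r$-neighborhood of a boundary edge, which is exactly the $r$-Markov property on $S_{r+1}$; equality in $\Sigma_1=\Sigma_e$ forces the subtrees rooted at the neighbors of $o$ to be conditionally independent given $X_o$, which is exactly the vertex-Markov property on $S_1$.
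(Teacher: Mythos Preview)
Your proposal is correct and follows essentially the same route as the paper. The only organizational difference is that the paper works directly from \eqref{eq:defSigmaT} and conditions explicitly on the root degree $\deg_T(o)=k$, using the identity \eqref{eq:condlaw} that $(T,X)$ and $(\vec T,\vec X)$ have the same conditional law given degree $k\ge 1$, so that each $k$-summand reduces to the $d$-regular computation \eqref{eq:S2S1b}--\eqref{eq:HSL}; your packaging via Remark \ref{rk:defSigmaT} and the size-biasing identity encodes the same averaging over $k$ implicitly.
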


\begin{proof}
We start by the first statement. Arguing as in the proof of Lemma \ref{th:minSigmaT}, it is enough to check the inequality for $r = 1$. The inequality $\Sigma_2(X) \leq \Sigma_1(X)$ is equivalent to  
\begin{equation}\label{eq:HSHE}
H(X_{S_2}) -  H(X_{S_1}) - \frac d 2 H ( \vec X_{E_2})  + \frac d 2 H ( \vec X_{E_1})\leq 0.
\end{equation}

Let $\deg_T(o)$ and $\deg_{\vec T}(o)$ be the degrees of the root in $T$ and $\vec T$.  For integer $k \geq 1$, 
 from \eqref{eq:vecmu}, we have, for any event $A$, 
$$
\dP ( (\vec X, \vec T) \in A , \deg_{\vec T}(o) = k) = \frac k d \dP ( ( X,  T) \in A, \deg_{T}(o) = k).
$$
It follows that $\dP ( \deg_{\vec T}(o)  = k )  = k \pi(k) /d$ and if $k \geq 1$ is in the support of $\pi$, 
\begin{equation}\label{eq:condlaw}
\dP ( (\vec X, \vec T) \in A | \deg_{\vec T}(o) = k) =  \dP ( ( X,  T) \in A | \deg_{T}(o) = k).
\end{equation}
In other words, $(X, T)$ and  $(\vec X, \vec T)$ have the same law when conditioned on the root degree is $k \geq 1$. 
For a set $S$ of vertices, let us denote by by $H_k ( S)$ the entropy of the variable $(T,X)$ restricted to $S$ and conditioned on the event $\deg_{ T}(o) = k$. Similarly, we set $H_k ( S | S') = H_{k} (S,S') - H_k (S')$ is the associated relative entropy.

From \eqref{eq:propRE}, we may write, for $t = 1,2$, 
$$
H(X_{S_t}) = H ( \deg_T(o) ) + \pi(0) H_0 (X_o) + \sum_{k=1}^\infty \pi(k)  H_k(S_t). 
$$
and 
$$
H(\vec X_{E_t}) = H ( \deg_{\vec T} (o) ) + \sum_{k=1}^\infty \frac{ k \pi(k)}{d} H_k(E_t). 
$$
Hence, \eqref{eq:HSHE} is equivalent to the claim:
\begin{equation}\label{eq:HSHE2}
\sum_{k=1}^\infty \pi(k) \PAR{ H_k( S_2) -  H_k( S_1) - \frac k 2 H_k ( E_2)  + \frac k 2 H_k ( E_1) } \leq 0.
\end{equation}
On the event $\deg_{T}(o) = k$, for $1 \leq i \leq k$, let $L_i = \{(i,1), \ldots, (i,n_i)\}$ be the offspring of vertex $i$. Note that the random variables $X_{L_i}$ conditioned on the event $\deg_{T}(o) = k$ are exchangeable.  
 Then, the computation from \eqref{eq:S2S1b} to \eqref{eq:HSL} gives 
$$
 H_k( S_2) -  H_k( S_1) - \frac k 2 H_k ( E_2)  + \frac k 2 H_k ( E_1) \leq \frac k 2 H_k (L_1|S_1) - \frac k 2 H_k (L |E_1)
$$ 
 The left-hand side of  \eqref{eq:HSHE2} is thus upper bounded by 
$$
\sum_{k=1}^\infty \frac k  2 \pi(k) \PAR{ H_k ( L_1 | S_1)  - H_k ( L | e) } = \frac d 2 \PAR{  H ( \vec X_{L_1} | \vec X_{S_1} ) - H ( \vec X_{L} | \vec X_{E_1} )}. 
$$
We now use the invariance of $\vec \mu$ by switching the two sides of the edge $E_1 = \{ o,1\}$. We get  $H ( \vec X_{L} | \vec X_{E_1} ) =  H ( \vec X_{L_1} | \vec X_{E_1} )$. Finally, since $E_1 \subset S_1$, we deduce that the inequalities \eqref{eq:HSHE}-\eqref{eq:HSHE2} hold.  As in the proof of Lemma \ref{th:minSigma}, the case of equality is a directed consequence of the case of equality in \eqref{eq:propRE}.

We now prove the second statement of Lemma \ref{th:minSigmaT}, $\Sigma_1(X) \leq \Sigma_e(X)$. It is equivalent to prove that 
$$
H(X_{S_1} ) - \frac d 2 H ( \vec X_{E_1}  ) \leq \frac d 2 H(\vec X_{E_1} ) - d H ( \vec X_{o}  )  + H (X_{o}  ). 
$$
Arguing as above, we find that this is equivalent to
$$
\sum_{k =1}^\infty \pi (k) \PAR{ H_k (S_1) - k  H_k (E_1) - (k-1) H_k(o) } \leq 0.
$$
As in the proof of Lemma \ref{le:staredgetoedgevertex}, it remains to use that $H_k (S_1) \leq  H(o) + k H_k (E_1|o)$ and $H_k(E_1|o)  =  H_k(E_1) - H_k(o) $. In the case of equality, this implies the conditional independence of $(X_1,\ldots,X_k)$ given $X_o$ and root degree equal to $k$.
\end{proof}

\subsection{Combinatorial characterization of the annealed entropy}
\label{subsec:SigmaCT}

In our extended setting, the combinatorial interpretation of the annealed entropy $\Sigma_r(\mu)$ explained in Subsection \ref{subsec:SigmaC} continues to hold. The definition of $\Sigma_n(\mu,r,\veps)$ in \eqref{eq:defSigman} remains unchanged. The following theorem is an extension of Theorem \ref{th:combS}.

\begin{theorem}\label{th:combST}
Let $\mu \in \cI_\pi(M)$ and $r \geq 1$ integer. We have
$$
\lim_{\veps \to 0} \liminf_{n \to \infty} \Sigma_n(\mu,r,\veps) = \lim_{\veps \to 0} \limsup_{n \to \infty} \Sigma_n(\mu,r,\veps) = \Sigma_r(\mu).
$$
\end{theorem}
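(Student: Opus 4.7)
The plan is to follow exactly the same scheme as the proof of Theorem \ref{th:combS}, adapted to random degree distributions. The bounded-degree hypothesis \eqref{eq:bddegree} guarantees that all Shannon entropies below are finite and that error terms can be controlled uniformly in $n$.

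\textbf{Step 1: Counting graphs with degree sequence $d_n$.} A classical extension of Bender-Canfield for bounded-degree sequences (e.g.\ via the configuration model together with the standard simplicity correction valid under \eqref{eq:bddegree}) gives
$$
\frac{1}{n}\log|\cG_n(d_n)| = \frac{\bar d_n}{2}\log n - s(\bar d_n) - \frac{1}{n}\sum_{v=1}^{n}\log(d_n(v)!)+o(1),
$$
where $\bar d_n = (1/n)\sum_v d_n(v)$. Since $\distr_{d_n}\to \pi$ weakly and degrees are bounded by $\Delta$, we have $\bar d_n \to d$ and $(1/n)\sum_v\log(d_n(v)!)\to \dE_\pi[\log(K!)]$, where $K\sim\pi$.

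\textbf{Step 2: Counting colored graphs close to $\mu_r$.} The analog of Proposition~5--6 of Delgosha--Anantharam for the configuration model with bounded degree sequence (which extends verbatim, or can be obtained by a direct first-moment computation via type classes, since the underlying combinatorial identities do not use $d$-regularity) yields
$$
\lim_{\veps\to 0}\liminf_{n\to\infty}\PAR{\frac 1n\log|\cG_n(\mu,r,\veps)|-\frac{\bar d_n}{2}\log n}=\lim_{\veps\to 0}\limsup_{n\to\infty}\PAR{\frac 1n\log|\cG_n(\mu,r,\veps)|-\frac{\bar d_n}{2}\log n}=J_r(\mu),
$$
where, with $\pi_\mu$ the distribution on unlabeled rooted colored edge-neighborhoods defined exactly as in the proof of Theorem~\ref{th:combS} but with variable root degree, and $\widetilde X_{S_r}$ the unlabeled colored ball of radius $r$ under $\mu$,
$$
J_r(\mu)=-s(d)+H(\widetilde X_{S_r})-\frac{d}{2}H(\pi_\mu)-\sum_{g\in\widetilde{\cE}_r}\dE[\log(N_X(g)!)].
$$
The only modification with respect to the regular case is that the off-the-root subtrees have random offspring distribution $\hat\pi$, and $N_X(g)$ now ranges over all edge-types compatible with $\vec\mu$.

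\textbf{Step 3: Labeling argument.} Exactly as in the regular case, we express the Shannon entropies of the labeled objects in terms of the unlabeled ones plus labeling-entropy corrections. Namely, with $K$ the relative entropy of a uniform labeling of one side of $\vec X_{E_r}$ given its unlabeled shape, we obtain $H(\vec X_{E_r})=H(\pi_\mu)+2K$. For the ball $X_{S_r}$, conditioning on the root degree (which is $\sim\pi$ under $\mu$ but $\sim\hat\pi$ under $\vec\mu$) and applying Lemma~\ref{le:exchH} to the exchangeable vector $\widetilde Y=(\widetilde X_{E^1_r},\dots,\widetilde X_{E^{\deg(o)}_r})$ yields
$$
H(X_{S_r})=H(\pi)+H(\widetilde X_{S_r})+dK-\dE[\log(\deg(o)!)]+\sum_{g\in\widetilde{\cE}_r}\dE[\log(N_X(g)!)].
$$
The term $H(\pi)$ appears because the root degree is itself random under $\mu$; this is the crucial new ingredient compared to the regular setting. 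Substituting these two identities into the formula for $J_r(\mu)$ and using the definition \eqref{eq:defSigmaT} of $\Sigma_r(\mu)$ gives
$$
\Sigma_r(\mu)=H(X_{S_r})-\frac{d}{2}H(\vec X_{E_r})-H(\pi)=J_r(\mu)+s(d)+\dE_\pi[\log(K!)].
$$

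\textbf{Step 4: Conclusion.} Combining Steps~1--3,
$$
\Sigma_n(\mu,r,\veps)=\frac{1}{n}\log|\cG_n(\mu,r,\veps)|-\frac{1}{n}\log|\cG_n(d_n)|\xrightarrow[\veps\to 0,\,n\to\infty]{}J_r(\mu)+s(d)+\dE_\pi[\log(K!)]=\Sigma_r(\mu),
$$
with matching $\liminf$ and $\limsup$ as stated.

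\textbf{Main obstacle.} The technically delicate point is Step~2: obtaining the Delgosha--Anantharam-type asymptotic for colored graphs with bounded but variable degree sequence. One option is to invoke the extensions of their BC entropy to the unimodular setting; another is to re-derive the upper bound directly by a type-class computation in the configuration model on $nd$ half-edges, and the matching lower bound by a switching argument that uses \eqref{eq:bddegree} to ensure that the number of forbidden configurations (loops, multiple edges inside a typical ball) is $O(1)$ and does not affect the leading exponential order. The bookkeeping of degree-dependent automorphism factors in $N_X(g)$ is the main source of new combinatorics compared with the $d$-regular proof.
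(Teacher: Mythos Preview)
Your approach is the paper's: both invoke a Bender--Canfield-type count for $|\cG_n(d_n)|$, the Delgosha--Anantharam asymptotic for $|\cG_n(\mu,r,\veps)|$ via a functional $J_r(\mu)$ on unlabeled objects, and then a labeling argument to pass from unlabeled to labeled entropies.

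However, your Step~3 contains bookkeeping errors and, more importantly, hides the one genuinely new ingredient compared to the $d$-regular case. Applying Lemma~\ref{le:exchH} correctly (conditioning on $\deg(o)=k$) gives $H_k(X_{S_r})=H_k(\widetilde X_{S_r})+kK'_k+\log(k!)-\sum_g\dE_k[\log N_X(g)!]$; after integrating against $\pi$ the signs of $\dE[\log(\deg(o)!)]$ and of $\sum_g\dE[\log N_X(g)!]$ are the opposite of what you wrote, and no extra $H(\pi)$ should appear (both $X_{S_r}$ and $\widetilde X_{S_r}$ determine the root degree, so the $H(\pi)$ terms cancel). As written, your displayed formula for $H(X_{S_r})$ does not yield the identity you claim in the next line.

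The substantive point you gloss over is why a \emph{single} quantity $K$ can play both roles. Under $\vec\mu$, conditioning on $\deg_{\vec T}(o)=k$ breaks the symmetry between the $o$-side and the $1$-side of the root edge, so the paper distinguishes their labeling entropies $K_k$ and $K'_k$. The subtrees in $X_{S_r}$ contribute $\sum_k\pi(k)\,k\,K'_k$, whereas the edge decomposition gives $\sum_k\pi(k)\tfrac{k}{2}(K_k+K'_k)$; matching these requires $\sum_k\pi(k)\tfrac{k}{2}(K'_k-K_k)=0$. The paper establishes this as $\tfrac{d}{2}\bigl(H(\sigma')-H(\sigma)\bigr)=0$, which is exactly the invariance of $\vec\mu$ under switching the oriented root edge. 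You are using this implicitly when you speak of ``one side'' and write $2K$, but it is the crux of the extension from regular to Galton--Watson trees and should be stated.
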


\begin{proof}
Let $s(d) = d/2 - (d/2) \log d$. From \cite[Theorem 2.16]{MR1864966}, we have
$$
 \lim_{n \to \infty} \PAR{ \frac 1 n  \log |\cG_n(d_n) | -  \frac d 2 \log n } = -s(d) + H(\deg(o)) - \dE [ \log (\deg(o) !) ],
$$
where $\deg(o)$ has law $\pi$. Also, from \cite{MR3405616,DA19}, we have 
$$
\lim_{\veps \to 0} \liminf_{n \to \infty} \PAR{ \frac 1 n  \log |\cG_n(\mu,r,\veps) | -  \frac d 2 \log n } = \lim_{\veps \to 0} \limsup_{n \to \infty} \PAR{ \frac 1 n \log |\cG_n(\mu,r,\veps) | -  \frac d 2 \log n } = J_r(\mu),
$$
where $
J_r(\mu) = -s(d) + H(\widetilde X_{S_r} ) - \frac d 2 H (\pi_\mu) - \sum_{g \in \widetilde \cE_r } \dE [\log (N_X (g)!)]
$ is defined exactly as in Theorem \ref{th:combS}, the only difference being that \eqref{eq:sumNX} is replaced by 
$$
\sum_{g \in \widetilde \cE_r } N_X (g) = \deg(o). 
$$
Arguing as in the proof of Theorem \ref{th:combS}, we have $H (\pi_\mu) = H(\widetilde X_{E_r})$ where $\widetilde X_{E_r}$ is the unlabeled coloring associated to $\vec X_{E_r}$. Hence, the theorem follows by checking that
\begin{equation}\label{eq:JrSigma}
H(\widetilde X_{S_r} ) - \frac d 2 H (\widetilde X_{E_r} )  = H(X_{S_r} ) - \frac d 2 H (\vec X_{E_r}) + \sum_{g \in \widetilde \cE_r } \dE [\log (N_X (g)!)]  - \dE [ \log (\deg(o) !) ] .
\end{equation}
In order to prove that \eqref{eq:JrSigma} holds, we decompose the left-hand side of the possible values of the root-degree. We write
$$
H(\widetilde X_{S_r} ) = H ( \pi) + H_0( X_o) + \sum_{k=1}^\infty \pi (k) H_k ( \widetilde X_{S_r} ),
$$
where $H_k$ is the entropy conditioned on $\deg_T(o) = k$. Similarly, 
$$
\frac d 2 H(\widetilde X_{E_r} ) = \frac d 2 H ( \hat \pi) +  \sum_{k=1}^\infty  \frac{k}{2} \pi (k) H_k ( \widetilde X_{E_r} ).
$$ 
Let $\dE_k[\cdot]$ is the expectation conditioned on $\deg_T(o) = k$. From \eqref{eq:condlaw}, it follows that the identity \eqref{eq:JrSigma} is equivalent to 
\begin{equation}\label{eq:JrSigma2}
\sum_{k=1}^\infty \pi (k) \PAR{ H_k ( \widetilde X_{S_r} ) - \frac k 2 H_k ( \widetilde X_{E_r} )} = \sum_{k=1}^\infty \pi (k) \PAR{ H_k (  X_{S_r} ) - \frac k 2 H_k (   X_{E_r} )  +  \sum_{g \in \widetilde \cE_r } \dE_k [\log (N_X (g)!)]  -  \log (k!)  }.
\end{equation}

We denote by $E_r^o$, the tree rooted at $o$  in $E_r \backslash \{o,1\}$ and by $E_r^1$ the tree rooted at $1$. Arguing as in the proof of Theorem \ref{th:combS}, we find 
$$
H_k (   X_{E_r} )  =  H_k ( \widetilde X_{E_r} ) + K_k + K'_k,
$$
where $K_k$ is the relative entropy of a random labeling $\sigma$ of $\widetilde X_{E^o_r}$ conditioned on $\deg(o) = k$ and $K'_k$ is the relative entropy of a random labeling $\sigma'$ of $\widetilde X_{E^1_r}$ conditioned on $\deg(o) = k$.  Similarly, arguing as in the proof of Theorem \ref{th:combS}, we get 
$$
H_k (   X_{S_r} ) = H_k ( \widetilde X_{S_r} )  + k K'_k -  \sum_{g \in \widetilde \cE_r} \dE_k [ \log (N_X(g) !)] +  \log (k!). 
$$
We deduce that the right-hand side of \eqref{eq:JrSigma2} is equal to 
$$
\sum_{k=1}^\infty \pi (k) \PAR{ H_k ( \widetilde X_{S_r} ) - \frac k 2 H_k ( \widetilde X_{E_r} )} + \sum_{k=1}^\infty \pi (k) \PAR{ \frac k 2 K_k' - \frac k 2 K_k}. 
$$
Finally, we observe that 
$$
\sum_{k=1}^\infty \pi (k) \PAR{ \frac k 2 K_k' - \frac k 2 K_k} = \frac d 2 \PAR{ H ( \sigma) - H (\sigma') }.
$$
The above expression is equal to $0$ because the law $\vec \mu$ is invariant by switching the two sides of the edge $\{o,1\}$.  This concludes the proof of Equation \eqref{eq:JrSigma}.
\end{proof}

\begin{remark}\label{rq:suman}
The proof of Theorem \ref{th:combST} gives the simplified expression $J_r(\mu) = -s(d) + \Sigma_r(\mu) - \dE [\log(\deg(o)!)] $. In particular, the proof of Lemma \ref{th:minSigmaT} has an interesting corollary for the maximizers of these functions $J_r(\mu)$. Notably, in \cite[Theorem 1.3]{MR3405616} below (9) we may remove the assumption that $\rho_1$ has finite support. Consequently, from \cite[Corollary 1.4]{MR3405616}, the annealed entropy $\Sigma(\rho)$ defined in \cite{MR3405616} for unimodular random trees with average root degree $d$ is {\em uniquely} maximized by $\UGW(\mathrm{Poi}(d))$. 
\end{remark}

\subsection{Proofs of Theorem \ref{th:1stmomentT}, Theorem \ref{th:2ndmomentT}, Theorem \ref{th:edgeMarkovT} and Theorem \ref{th:vertexMarkovT}}

As already pointed, the conclusion of Theorem \ref{th:concmain} holds in our extended setting. We may thus repeat verbatim the proofs in Section \ref{sec:proofsMain} and invoke Theorem \ref{th:combST} in place of Theorem \ref{th:combS} and Lemma \ref{th:minSigmaT} in place of Lemma \ref{th:minSigma}. \qed

\subsection*{Acknowledgement.} C.B. thanks Suman Chakraborty for his comments and for suggesting Remark \ref{rq:suman}. The research was partially supported by the NKFIH "\'Elvonal" KKP 133921 grant (for \'A. B. and B. Sz.) and by grant ANR-16-CE40-0024 (for C.B.).

\bibliographystyle{abbrv}
\bibliography{bib}

\end{document}